\title{Malgrange-Galois groupoid of Painlevé VI equation with parameters}
\author[D. Bl\'azquez-Sanz]{David Bl\'azquez-Sanz}
\author[G. Casale]{Guy Casale}
\author[J. S. Sebasti\'an D\'iaz Arboleda]{Juan Sebastián D\'iaz Arboleda}
\address{David Bl\'azquez-Sanz \& Juan Sebastián D\'iaz Arboleda \hfill\break\indent Universidad Nacional de Colombia - Sede Medell\'in \hfill\break\indent  Facultad de Ciencias 
\hfill\break\indent Escuela de Matem\'aticas \hfill\break\indent  Medell\'in, Colombia}
\email{dblazquezs@unal.edu.co}
\email{jsdiaz@unal.edu.co}
\address{Guy Casale\hfill\break\indent Univ Rennes, CNRS, IRMAR-UMR 6625, F-35000 Rennes, France}
\email{guy.casale@univ-rennes1.fr}
\subjclass[2010]{34M15, 12H05, 58H05}
\keywords{Painlev\'e VI, non-linear differential Galois theory, $\mathcal D$-groupoid.}
\date{April 2020}
\NewDocumentCommand{\tens}{t_}
 {%
  \IfBooleanTF{#1}
   {\tensop}
   {\otimes}%
 }
\NewDocumentCommand{\tensop}{m}
 {%
  \mathbin{\mathop{\otimes}\displaylimits_{#1}}%
 }
\begin{document}

\makeatletter
\newenvironment{dem}{Proof:}{\end{proof}}

\makeatother

\numberwithin{equation}{section}
\theoremstyle{plain}\newtheorem{theorem}{Theorem}[section]

\theoremstyle{plain}\newtheorem*{theo}{Theorem}

\theoremstyle{plain}\newtheorem{proposition}[theorem]{Proposition}
\theoremstyle{plain}\newtheorem{lemma}[theorem]{Lemma}
\theoremstyle{plain}\newtheorem{corollary}[theorem]{Corollary}

\theoremstyle{definition}\newtheorem{definition}[theorem]{Definition}
\theoremstyle{remark}\newtheorem{remark}[theorem]{Remark}

\theoremstyle{definition}\newtheorem{example}[theorem]{Example}

\newcommand{\fn}[5]{
            \begin{array}{cccl}
                #1: & #2 & \longrightarrow &  {#3}\\
                    & #4 & \longmapsto & {#1}(#4)={#5}
            \end{array}}
            
\newcommand{\na}[3]{\left( \mathcal{L}_{#1}\nabla \right)_{#2}{#3}=0}

\maketitle

\begin{abstract}
    The Malgrange-Galois groupoid of Painlevé IV equations is known to be, for very general values of parameters, the pseudogroup of transformations of the phase space preserving a volume form, a time form and the equation. Here we compute the Malgrange-Galois groupoid of Painlev\'e VI family including all parameters as new dependent variables. We conclude it is the pseudogroup of transformations preserving parameter values, the differential of the independent variable, a volume form in the dependent variables and the equation. This implies that a solution of Painlevé VI depending analytically on parameters does not satisfy any new partial differential equation (including derivatives w. r. t. parameters) which is not derived from Painlevé VI. 
\end{abstract}


\section{Introduction}

This article is devoted to the computation of the Malgrange-Galois groupoid of the Painlevé VI family,
\begin{eqnarray*}
u'' = 
\frac{1}{2} \left( \frac{1}{u} + \frac{1}{u-1} + \frac{1}{u-x} \right){u'}^2 -\left( \frac{1}{x} + \frac{1}{x-1} + \frac{1}{u-x} \right){u'} 
\\
+ \frac{ u(u-1)(u-x)}{x^2(x-1)^2} \left( \frac{c^2}{2} - {\frac{a^2}{2}}\frac{x}{u^2} + {\frac{b^2}{2}}\frac{x-1}{(u-1)^2} + \frac{1-e^2}{2} \frac{x(x-1)}{(u-x)^2} \right);
\end{eqnarray*}
including the parameters $a\,,b\,,c\,,\,e$ as variables. In order to make use of the invariant volume, we consider the Painlevé VI family in its Hamiltonian form, as a vector field in $\mathbb C^3\times\mathbb C^4$:
\begin{align*}
\vec X &= \frac{\partial}{\partial x} + \frac{\partial H}{\partial q} \frac{\partial}{\partial p}  -\frac{\partial H}{\partial p}\frac{\partial}{\partial q} \tag{$\rm HP_{VI}$}\\
H &= \frac{1}{x(x-1)} \Big[ p(p-1)(p-x)q^2 - \Big( a(p-1)(p-x) + \\
& b p(p-x) +(e-1)p(p-1)\Big) q  +\frac{1}{4}\big( (a+b+e-1)^2 -c^2\big) (p-x) \Big].
\end{align*}
Our main result is that the Malgrange-Galois groupoid of the Painlevé VI family is the algebraic pseudogroup of transformations of $\mathbb C^7$ fixing the values of the parameters (here $\bar\pi : \mathbb C^3 \times \mathbb C^4 \to \mathbb C^4$ stands for the cartesian projection in the parameter space),
inducing a translation on the $x$-axis, preserving the volume form $dp\wedge dq \wedge dx$ and preserving $\vec X$.  \\

{\noindent \bf Theorem \ref{th:Final_Mal_groupoid_PVI}. }{\it 
The Malgrange-Galois groupoid of Painlev\'e VI equation is given by
\begin{align}
\mathrm{Mal}(\vec X)  
&= \Big\{ \phi \mid
 \bar\pi \circ \phi = \bar\pi; \;\; \phi^*(dx) = dx; \;\; \phi_*(\vec X)= \vec X;\nonumber
\\
& \qquad \qquad \phi^*(dp\wedge dq) \equiv dp\wedge dq \mod da,db,dc,de,dx
\Big\}.
\end{align}

}

As a consequence we have a functional hyper-transcendence result concerning parameter dependent solutions of PVI. A solution depending analytically on the parameters cannot satisfy any partial differential equation involving derivatives with respect to the independent variable and the parameters, except those equations that are derived from Painlevé VI.\\

{\noindent \bf Corollary \ref{co:par}. }{\it 
If $y(x,a,b,c,e)$ is a parameter dependent solution of the sixth Painlev\'e equation then its annihilator in $\mathcal O_{J(\mathbb C^5, \mathbb C)}$ is the $\partial$-ideal generated by the sixth Painlev\'e equation.}\\

Let us us briefly recall the historical development of differential Galois theory. The ideas of E. Galois on algebraic equations were extended by E. Picard \cite{picard1887equations} and E. Vessiot \cite{vessiot1904theorie} to linear differential equations. In his Ph.D. thesis \cite{drach1898essai} J. Drach attempted to define a Galois group for nonlinear differential equations. In his article, the group-like objet is an algebraic pseudogroup. E. Vessiot spent years to correct Drach's mistake. He succeded is \cite{vessiot1946generale} but his work was forgotten. Later, E. Kolchin \cite{kolchin1973} developed Picard-Vessiot theory in the framework of differential field extensions and algebraic groups. To an order $n$ linear differential equation with coefficients in the differential field $\left( \mathbb C(x), \frac{\partial}{\partial x} \right)$ is associated an algebraic subgroup of ${\rm GL}_n(\mathbb C)$.  This group measures algebraic relations between a basis of solutions and theirs derivatives.\\

At the turn of the 21st century, H. Umemura \cite{umemura1996differential} and B. Malgrange \cite{malgrange2001} independently  defined a group-like objet associated to a (nonlinear) differential equation. In Umemura's work this object is a Lie-Ritt functor, a special case of infinitesimal group. In Malgrange's work the object is an algebraic pseudogroup ($\mathcal D$-groupoid) that we call Malgrange-Galois groupoid. Simultaneously, an important refinement of the Galois group for linear differential equations was introduced by P. Landesman \cite{landesman2008generalized} and by P. J. Cassidy and M. F. Singer \cite{cassidy2005galois}. The ``parameterized'' Galois group of an order $n$ linear differential equation with coefficients\footnote{The extension of parameterized Picard-Vessiot theory that allows coefficients in finitely generated fields can be consulted in \cite{sanz2018differential}.} in $\left(\mathbb C(t,x),\frac{\partial}{\partial x}\right)$ ($t$ is a set of parameters),  is an algebraic differential subgroup of   
$\left( {\rm GL}_n(\mathbb C(t)), \frac{\partial}{\partial t}\right)$. This ``group'' is a special case of intransitive pseudogroup and it measures the differential relations between a basis of solution when one derives them with respect to $x$ and to $t$. \\

In Malgrange's nonlinear differential Galois theory there is no distinction between parameters and variables. Therefore, in the complex case, the classical and parameterized Picard-Vessiot theory can be submersed in the nonlinear differential Galois theory. The corresponding differential Galois groups are recovered as the diagonal part of the Malgrange-Galois groupoid. Thus, the Malgrange-Galois groupoid of Painlevé VI family is the nonlinear analog of the differential algebraic Galois group in parameterized Picard-Vessiot theory. Nonlinear monodromy of Painlev\'e VI gives elements of Malgrange-Galois groupoid. The nonlinear monodromy seen as a selfmap on $\mathbb C ^2 \times \mathbb C^4$ preserving the projection on the parameters space $\mathbb C^4$ does not satisfy a common algebraic partial differential equation with coefficients in $\left(\mathbb C (p,q,a,b,c,e), \frac{\partial }{\partial p}, \frac{\partial }{\partial q}, \frac{\partial }{\partial a}, \frac{\partial }{\partial b}, \frac{\partial }{\partial c}, \frac{\partial }{\partial e}\right)$ which is not a consequence of the invariance of $dp\wedge dq \wedge da \wedge db \wedge dc \wedge de$.  \\

The first attempt to compute the Galois groupoid of a Painlev\'e equation\footnote{For fixed values of parameters.} are done by P. Painlev\'e \cite{painleve1902irreductibilite} and J. Drach \cite{drach1915irreductibilite}, based on the wrong definition of Drach. Computations are now done for all Painlev\'e VI equations \cite{cantat2009dynamics} and for general values of parameters for Painlevé I to V \cite{casaledavy2020}. These works consider fixed values of parameters and compute the Malgrange-Galois groupoid of the corresponding vector field on $\mathbb C^3$. Specifically for the parameters not in the Picard set we have:

\begin{theo}[Cantat-Loray, \cite{cantat2009dynamics}]
For parameters $( a,b,c,e) $ not in Picard parameters set, Malgrange-Galois groupoid is given by:
\begin{align*}
\mathrm{Mal}(\vec X|_{\mathbb C^3 \times \{ ( a,b,c,e)\} } )
&=
\Big\{
\phi : ( \mathbb C^3, *) \overset{\sim}{\longrightarrow} ( \mathbb C^3,\star) \mid \phi^* dx =dx;\,\, \phi_*(\vec X)= \vec X;
\\
& \quad \quad \quad \phi^* dp\wedge dq = dp \wedge dq \mod dx
\Big\}
\end{align*}

In this formula $a,b,c,e$ are fixed parameters. The asterisk and the star stand for arbitrary points in $\mathbb C^3$.
\end{theo}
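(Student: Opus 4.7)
The proof splits into two inclusions, and I would establish them in opposite directions.

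\textbf{Upper bound} ($\mathrm{Mal}(\vec X|_{\mathbb C^3 \times \{(a,b,c,e)\}})$ is contained in the right-hand side). The Hamiltonian form of $\vec X$ furnishes three rational invariant tensors. From $\vec X(x)=1$ one gets $\mathcal L_{\vec X} dx = 0$; the standard Cartan computation for a time-dependent Hamiltonian yields $\mathcal L_{\vec X}(dp\wedge dq - dH\wedge dx)=0$, hence $dp\wedge dq$ is preserved modulo $dx$; and $\mathcal L_{\vec X}\vec X=0$ tautologically. Because $\mathrm{Mal}(\vec X)$ is by definition the smallest $\mathcal D$-groupoid whose Lie algebra contains $\vec X$, every rational differential tensor annihilated by $\vec X$ yields defining equations for $\mathrm{Mal}(\vec X)$. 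Thus any element of $\mathrm{Mal}(\vec X)$ preserves $dx$, $\vec X$, and $dp\wedge dq$ modulo $dx$.

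\textbf{Lower bound} (the stated pseudogroup $\mathcal G$ is contained in $\mathrm{Mal}(\vec X)$). The plan is to show that no proper $\mathcal D$-subgroupoid of $\mathcal G$ contains all the transformations already known to belong to $\mathrm{Mal}(\vec X)$. I would proceed in three steps. \emph{Classification.} Since $\mathcal G$ is intransitive with first integral $x$ and preserves a transverse volume on two-dimensional leaves, Cartan's classification of transitive Lie pseudogroups in dimension two shows that a strict $\mathcal D$-subgroupoid $\mathcal H\subsetneq \mathcal G$ must preserve one of: an additional rational first integral (reducing transverse dimension); a rational transverse foliation (a $1$-form up to factor); or a flat affine/projective structure compatible with the volume. \emph{Monodromy.} Analytic continuation of $\vec X$ along loops in $\mathbb C\setminus\{0,1\}$ produces the nonlinear monodromy of Painlev\'e~VI, whose holonomy transformations lie in $\mathrm{Mal}(\vec X)$. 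Via the Riemann--Hilbert correspondence these transformations realise the pure mapping class group of the four-punctured sphere, acting by polynomial automorphisms on the Jimbo--Fricke cubic character variety $S_{(a,b,c,e)}\subset\mathbb C^3$. \emph{Dynamical exclusion.} The Picard locus is precisely the locus at which this discrete dynamical system degenerates; outside it, one has Zariski-dense orbits, loxodromic elements of positive entropy, and no invariant algebraic foliation or affine structure. Consequently, for each candidate $\mathcal H$ from the classification one exhibits a monodromy element not preserving the corresponding additional structure, forcing $\mathrm{Mal}(\vec X)=\mathcal G$.

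The hard part is the final step, i.e.\ transferring the discrete dynamics on the cubic surface into a statement about the continuous $\mathcal D$-groupoid over $\mathbb C^3$. Concretely, one must check that any rational tensor invariant under $\mathrm{Mal}(\vec X)$ descends, through the locally biholomorphic but globally non-algebraic Riemann--Hilbert map, to an algebraic tensor on $S_{(a,b,c,e)}$ invariant under the monodromy group; only then can the dynamical results be invoked. Equivalently, and closer to the original Cantat--Loray argument, one rules out each case of the classification by hand using explicit loxodromic monodromies (Dehn twist compositions) acting on the cubic, whose existence is guaranteed precisely by the non-Picard hypothesis on $(a,b,c,e)$. This interplay between the tensorial (Lie-Cartan) reductions of $\mathcal G$ and the polynomial dynamics on a cubic surface is where the real difficulty lies.
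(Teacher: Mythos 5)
This statement is not proved in the paper at all: it is imported verbatim from Cantat--Loray (Theorem 6.1 of \cite{cantat2009dynamics}, restated in the body as Proposition \ref{pr:picar_param}) and used only as an external input to the computation for the full parameter family, so there is no internal proof to compare against. Judged against the original Cantat--Loray argument, your outline is essentially a faithful reconstruction: the upper bound from the invariant tensors $dx$, $\vec X$ and $dp\wedge dq \bmod dx$; the lower bound by reducing a hypothetical proper $\mathcal D$-subgroupoid, via the classification of volume-preserving pseudogroups in transverse dimension two (in its algebraic form, rather than Cartan's literal transitive classification), to an extra rational first integral, invariant foliation, or invariant affine/projective structure; and the exclusion of each such structure using the nonlinear monodromy, which through the Riemann--Hilbert correspondence acts as the mapping class group by polynomial automorphisms on the family of cubic character varieties, where the non-Picard hypothesis guarantees loxodromic elements, positive entropy and the absence of invariant curves, foliations or structures. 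You also correctly isolate the genuinely delicate point --- transporting a rational $\mathrm{Mal}(\vec X)$-invariant structure through the transcendental, non-algebraic Riemann--Hilbert map to a holomorphic monodromy-invariant structure on the cubic before the dynamical results can be invoked --- which is precisely where the analytic effort of \cite{cantat2009dynamics} is concentrated.
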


 These groupoids are ``transversally simple''. Therefore the Malgrange-Galois groupoid of the family has to be a $\mathcal D$-groupoid in $\mathbb C^7$ whose restriction to specific values of parameters yields the ``transversally simple groupoid'' of volume preserving transformations. This situation is similar to the linear case treated in \cite{sanz2018differential}, where the computations were carried out by mean of the so-called Cassidy theorem \cite{cassidy1989classification} on the structure of differential subgroups of simple groups. In the nonlinear situation, one has to replace Cassidy theorem by an algebraic version of  Kiso-Morimoto theorem \cite{kiso1979local}, and apply the non-existence of isomonodromic transformations to get our main result.

\subsection{Organization of the paper} In section \S\ref{s:Dgroupoids} we recall the notion of algebraic pseudogroup (which from now on will be termed $\mathcal D$-groupoid). Our exposition is a simplified version of that in \cite{malgrange2001, malgrange2010pseudogroupes}. This simplification is justified in Appendix \ref{App:A} (see also \cite{Damien2016specialisation}). In section \S\ref{s:Galois} we introduce the Malgrange-Galois groupoid of a vector field, which is a particular case of the notion of Galois groupoid of a foliation treated in \cite{malgrange2001, malgrange2010pseudogroupes}. In \S\ref{sec:PainleveVI} we put together all the pieces to carry on the computation of the Malgrange-Galois groupoid of Painlev\'e VI family. A key point in the computation is an algebraic version of Kiso-Morimoto theorem whose proof is given in Appendix \ref{s:kiso}.

\subsection{Notation and conventions} Given a complex irreducible smooth algebraic variety $M$ we denote $\mathbb C(M)$ the field of rational functions on $M$, $\mathfrak X_M$ the Lie algebra of rational vector fields on $M$ and $\Omega_M^k$ the space of rational $k$-forms on $M$. Given two smooth complex algebraic varieties $M$ and $N$ we denote by $J_k(M,N)\to M\times N$ the bundle of $k$-jets of germs of holomorphic maps from open subsets of $M$ to $N$. ${\rm Aut}_k(M)\subset J_k(M,M)$ is the Zariski open subset of $k$-jets of local biholomorphisms. The varieties $M$ and $N$ can be pointed, so that $J_k((M,p),(N,q))$ denotes the space of $k$-jets of local holomorphisms with source $p$ and target $q$. For a bundle $E\to M$ we use the notation $J_k(E/M)$ and $J(E/M)$ for the bundle $k$-jets and infinite order jets of sections respectively, $J_k(E/M)$ is a subset of $J_k(M,E)$. In any of the above cases the absence of the subindex $k$ means that we take the projective limit of the jet bundles so that $J(M,N)\to M\times N$ represents the bundle of jets of infinite order (or formal maps) from $M$ to $N$, and so on.

\section{$\mathcal D$-groupoids}\label{s:Dgroupoids}

In this section we develop the basic tools for dealing with $\mathcal D$-groupoids. For more details and complete proofs see \cite{malgrange2001, malgrange2010pseudogroupes, casaledavy2020}. Note that the first analytic definition of $\mathcal D$-groupoid in \cite{malgrange2001} was replaced by an algebraic one in \cite{malgrange2010pseudogroupes}. Here we present an alternative definition (Definition \ref{def:D_groupoid}) with the advantage of not referring to the differential ring sheaf structure of the regular functions on ${\rm Aut}(M)$; the equivalence with Definition 5.2 in \cite{malgrange2010pseudogroupes} is given in appendix. Along this section, $M$ is an affine smooth irreducible complex variety 
and $\mathbb C(M)$ is its field of rational functions.

\subsection{Rational groupoids of Gauge transformations}

For the general definitions of grou\-poid, topological groupoid, differentiable groupoid, and Lie groupoid we refer the reader to \cite{mackenzie1987lie, mackenzie2005general}. Let $\pi\colon P\to M$ be a principal bundle modeled over an affine algebraic group $G$. Without loss of generality we assume that $P$ is also an affine and irreducible smooth variety. We consider the groupoid $(s,t)\colon {\rm Iso}(P)\to M\times M$ of $G$-equivariant transformations between the fibers of $\pi$. 
$${\rm Iso}(P) = \left\{\sigma \colon P_x\to P_y \,\colon x,y\in M,\,\,\sigma\,\mbox{ is }\,G\mbox{-equivariant}\right\}.$$
It is an algebraic variety and a Lie groupoid locally isomorphic -in the analytic topology- to $M \times M \times  G$.

\begin{definition}
A rational groupoid of gauge transformations of $P$ is a subvariety $\mathcal G\subseteq {\rm Iso}(P)$ satisfying:
\begin{itemize}
    \item[(a)] There is an open subset $U\subset M$ such that $(s,t)\colon \mathcal G|_U\to U\times U$ is a differentiable groupoid.
    \item[(b)] For any open subset $U\subset M$ $\mathcal G|_U$ is Zariski dense in $\mathcal G$. 
\end{itemize}
\end{definition}

\begin{remark}
Condition (b) means that $\mathcal G$ is defined on the generic point of $M$. It means that irreducible components of $\mathcal G$ dominate $M$ by the source and by the target projections. If $\mathcal G$ is irreducible and satisfies condition (a) then it satisfies condition (b).
\end{remark}

We give three examples showing different pathologies.
\begin{example}
Let us consider $G = \{e\}$, $S$ a  non-trivial closed subset of $M$ with at least two different points, and $U = M\setminus S$. In such case we have ${\rm Iso}(P) = M\times M$. Let us define:
$$\mathcal G = \{(x,x) : x\in M\}\cup (S\times S)$$
It is a subgroupoid of $M\times M$. Moreover $\mathcal G|_{U}$ is a differentiable groupoid, but $\mathcal G|_U$ is not Zariski dense in $\mathcal G$, and therefore $\mathcal G$ is not a rational groupoid.
\end{example}
\begin{example}
Let $M = \mathbb C^n$ $P= M \times \{e\}$ and $P,Q$ two polynomials in $n$ variables such that ${\rm g.c.d.}(P, Q) =1$.
The subvariety  
$$
G =\{(x,y) \in M\times M : P(x)Q(y) = Q(x)P(y)\}
$$
is not a subgroupoid but it is a rational subgroupoid as its restriction on the complement of the indeterminacy locus $ \{P=0, Q=0\}$ and the singular locus $\{QdP-PdQ=0\}$ is a differentiable groupoid.
\end{example}

\begin{example}
Let $M = \mathbb C$ and $P = M\times \mathbb C^\ast$. The subvariety of ${\rm Iso}(P) = M\times M \times \mathbb C^\ast$ given by 
$$ 
G = \{(x,y,u) \in {\rm Iso}(P) : yu=x \}
$$
is a subgroupoid but not a differentiable subgroupoid. It is a differentiable subgroupoid in restriction to $U=\mathbb C^\ast\subset M$.
\end{example}

A useful way of defining a groupoid is through its invariants. Let $F\colon P\to V$ be a rational map. We define the groupoid ${\rm Sym}(F)$ of symmetries of $F$ as the Zariski closure of the set:
$$
\left\{\sigma\in {\rm Iso}(P)\colon 
\forall p\in P_{s(\sigma)}\cap {\rm dom}(F)\, , \,  F(p) = F(\sigma(p))
\right\}
$$
Moreover, for a given subset $\mathbb F\subseteq \mathbb C(P)$ of rational functions we have,
$${\rm Sym}(\mathbb F) = \bigcap_{F\in \mathbb F} {\rm Sym}(F).$$

Reciprocally, given a rational groupoid of Gauge transformations $\mathcal G$ it has an associated field of invariants:
$$
{\rm Inv}(\mathcal G) = \{f\in \mathbb C(P) \colon \forall \sigma\in\mathcal G,\, \forall p\in P_{s(\sigma)}\cap {\rm dom}(f),\, f(p) = f(\sigma(p))\}.
$$
Following \cite[Proposition 2.18]{sanz2018differential},  there is a natural correspondence:

\begin{proposition}[Galois correspondence]\label{pro:Galois_correspondence1}
The assignation $\mathcal G\leadsto {\rm Inv}(\mathcal G)$
is a bijective correspondence (and anti-isomorphism of lattices) between the set of rational groupoids of gauge transformations of $P$ and the set of $G$-invariant subfields of $\mathbb C(P)$ containing $\mathbb C$. Its inverse is given by $\mathbb F \leadsto {\rm Sym}(\mathbb F)$.
\end{proposition}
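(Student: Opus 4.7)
The plan is to follow the standard Galois correspondence pattern: check that both maps are well-defined on the claimed spaces, then verify they are mutual inverses. For a rational groupoid $\mathcal{G}$, the invariants $\mathrm{Inv}(\mathcal{G}) \subseteq \mathbb{C}(P)$ form a subfield because the defining relations $f(p) = f(\sigma(p))$ are stable under the field operations on $\mathbb{C}(P)$, and constants are trivially invariant. The $G$-invariance uses $G$-equivariance of each $\sigma \in \mathcal{G}$: writing the right $G$-action on functions as $(g^{*}f)(p) = f(p\cdot g)$, one computes $(g^{*}f)(\sigma(p)) = f(\sigma(p)\cdot g) = f(\sigma(p\cdot g)) = f(p\cdot g) = (g^{*}f)(p)$, so $g^{*}f \in \mathrm{Inv}(\mathcal{G})$. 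Conversely, for a $G$-invariant subfield $\mathbb{F}$, $\mathrm{Sym}(\mathbb{F})$ is a subgroupoid because composition and inversion of $G$-equivariant transformations preserving $\mathbb{F}$ still preserve $\mathbb{F}$; it is Zariski closed by definition, and condition (b) holds because the defining equations are rational, ensuring density on any Zariski open subset of $M$.

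Next I would observe the tautological inclusions $\mathbb{F} \subseteq \mathrm{Inv}(\mathrm{Sym}(\mathbb{F}))$ and $\mathcal{G} \subseteq \mathrm{Sym}(\mathrm{Inv}(\mathcal{G}))$, both immediate from the definitions. For the reverse inclusions, the strategy is to pass to the generic point of $M \times M$, where the fiber $\mathrm{Iso}(P)_{(x,y)}$ is a $G$-torsor. Rationality of $\mathcal{G}$ (condition (b)) ensures that $\mathcal{G}$ is determined by its restriction to a Zariski open subset, and thus by its generic fiber. Then $\mathrm{Sym}(\mathrm{Inv}(\mathcal{G})) \subseteq \mathcal{G}$ becomes the separation statement that if $\sigma$ lies outside $\mathcal{G}$, there exists $f \in \mathrm{Inv}(\mathcal{G})$ with $f \circ \sigma \neq f$; this is a standard consequence of the fact that regular functions on a quasi-affine variety separate a point from any closed subvariety not containing it. Symmetrically, $\mathrm{Inv}(\mathrm{Sym}(\mathbb{F})) \subseteq \mathbb{F}$ follows by identifying $\mathbb{F}$ with the field cut out by the defining ideal of $\mathrm{Sym}(\mathbb{F})$ in the generic fiber.

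The main obstacle is to keep the $G$-equivariance consistent throughout. This is handled by identifying $\mathrm{Iso}(P)$ with $(P \times_M P)/G$ under the diagonal $G$-action, so that $G$-equivariant transformations correspond to $G$-orbits in $P \times_M P$; the problem then reduces to a classical Galois correspondence between $G$-invariant Zariski closed subvarieties of $P \times_M P$ and $G$-invariant subfields of its function field, which restrict to $G$-invariant subfields of $\mathbb{C}(P)$ via pullback. Under these identifications the argument is the one carried out in \cite[Proposition 2.18]{sanz2018differential}, which the statement explicitly invokes.
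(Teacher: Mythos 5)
The paper offers no internal proof of this proposition: it is stated as a direct import of \cite[Proposition 2.18]{sanz2018differential}, so there is nothing of the paper's own to compare against. Your skeleton --- well-definedness of both assignations, the two tautological inclusions $\mathbb F\subseteq {\rm Inv}({\rm Sym}(\mathbb F))$ and $\mathcal G\subseteq {\rm Sym}({\rm Inv}(\mathcal G))$, then the reverse inclusions at the generic point --- is the standard one and is consistent with the cited source, and your verification that ${\rm Inv}(\mathcal G)$ is a $G$-stable subfield is correct.

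The difficulty is that the only place where you attempt to supply the substantive content is the place where your justification is the wrong tool. For ${\rm Sym}({\rm Inv}(\mathcal G))\subseteq\mathcal G$ you must separate a $G$-equivariant $\sigma\colon P_x\to P_y$ lying outside $\mathcal G$ by a function $f\in{\rm Inv}(\mathcal G)\subseteq\mathbb C(P)$. The fact that regular functions on a quasi-affine variety separate a point from a closed subvariety produces a regular function on ${\rm Iso}(P)$ vanishing on $\mathcal G$ and not at $\sigma$; such a function is not an element of $\mathbb C(P)$, let alone a $\mathcal G$-invariant one, so it proves nothing here. What is actually needed is Rosenlicht's theorem --- the rational invariants of the equivalence relation that $\mathcal G$ induces on $P$ separate generic orbits --- combined with the rigidity of $G$-equivariant maps between $G$-torsors: if $\sigma$ preserves ${\rm Inv}(\mathcal G)$ then a generic $p\in P_x$ and $\sigma(p)$ lie in one $\mathcal G$-orbit, so some $\tau\in\mathcal G$ with source $x$ satisfies $\tau(p)=\sigma(p)$, and since a $G$-equivariant map of torsors is determined by the image of a single point, $\sigma=\tau\in\mathcal G$. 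Neither ingredient appears in your write-up. Likewise, ``identifying $\mathbb F$ with the field cut out by the defining ideal of ${\rm Sym}(\mathbb F)$'' is not an argument for ${\rm Inv}({\rm Sym}(\mathbb F))\subseteq\mathbb F$: one needs that ${\rm Sym}(\mathbb F)$ acts transitively on the generic fibres of the rational quotient $q\colon P\dashrightarrow Q$ with $\mathbb C(Q)=\mathbb F$, and this is exactly where the hypothesis that $\mathbb F$ is $G$-stable enters --- it lets the $G$-action descend to $Q$, so that the unique equivariant map sending $p$ to $p'$ with $q(p)=q(p')$ satisfies $q(p\cdot g)=q(p)\cdot g=q(p')\cdot g=q(p'\cdot g)$ and hence preserves $\mathbb F$. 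Without these two points the proof is a restatement of the claim rather than a proof; with them (or with the explicit appeal to \cite{sanz2018differential} you make at the end) it is complete.
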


Let $\vec X$ be a $G$-invariant rational vector field on $P$. Its field of rational first integrals is,
$$\mathbb C(P)^{\vec X} = \{f\in \mathbb C(P) \,\colon\, \vec X \cdot f =0\}.$$
The $G$-invariance of $\vec X$ implies that of the field $\mathbb C(P)^{\vec X}$.

\begin{definition}\label{df:Galois}
The Galois groupoid of the $G$-invariant rational vector field $\vec X$ is,
$${\rm Gal}(\vec X) = {\rm Sym}(\mathbb C(P)^{\vec X}).$$
\end{definition}

\begin{example} 
Some examples of Galois groupoids. 
\begin{enumerate}
\item Let us consider $M = \mathbb C^2$ with coordinates $x$ and $\alpha$, $G = \mathbb C^*$ with coordinate $u$ and $P = M\times G$. Let us consider $\vec X = \frac{\partial}{\partial x} + \alpha u\frac{\partial}{\partial u}$. Then $\alpha$ is a rational first integral of $\vec X$, moreover the integral curves of $\vec X$ with $\alpha \neq 0$ are Zariski dense in planes with constant $\alpha$. Therefore $\mathbb C(P)^{\vec X} = \mathbb C(\alpha)$ and:
$${\rm Gal}(\vec X) = \{(x_1,\alpha_1,x_2,\alpha_2,\sigma) \colon  \alpha_1 = \alpha_2 \} \subset M \times M\times G = {\rm Iso}(P).$$
\item Let us consider $M = \mathbb C$ with coordinate $x$, $G = {\rm GL}_2(\mathbb C)$ with coordinates $u_{11}$, $u_{12}$, $u_{21}$, $u_{22}$ and $P = M\times G$. Let us consider the vector field associated to the fundamental form of Airy equation,
$$\vec X = \frac{\partial}{\partial x} + u_{21}\frac{\partial}{\partial u_{11}} + 
u_{22}\frac{\partial}{\partial u_{12}} + xu_{11}\frac{\partial}{\partial u_{21}} + 
xu_{12}\frac{\partial}{\partial u_{22}}.$$
It is well known that the only rational invariant of this vector field is the determinant function,
$$\Delta = u_{11}u_{22} - u_{12}u_{21}$$
and therefore,
$${\rm Gal}(\vec X) = {\rm Sym}(\mathbb C(\Delta)) = 
\{(x_1,x_2,\sigma)\,\colon\, \sigma\in{\rm GL}_2(\mathbb C),\, {\det}(\sigma) = 1\}.$$
\end{enumerate}
\end{example}

\begin{remark}
Definition \ref{df:Galois} is a particular case of the more general notion of Galois groupoid for a  partial $G$-invariant connection given in \cite[Definition 2.21]{blazquez2019differential}. It suffices to consider that $\langle \vec X \rangle$ as a partial $G$-invariant $\langle \pi_*(\vec X ) \rangle$-connection. The reader may check \cite[\S.2]{blazquez2019differential} for a more detailed account of the definition of the Galois groupoid and further examples.
\end{remark}

\subsection{Frame bundles} 
Let $m$ be the complex dimension of $M$. Let $\Gamma_k$ be the group of $k$-jets of germs of biholomorphisms of $(\mathbb C^m,0)$. They form a projective systems of affine algebraic groups,
$$
\ldots \to \Gamma_k \to \Gamma_{k-1} \to \ldots \to \Gamma_1 \simeq {\rm GL}_m(\mathbb C) \to \{1\}.
$$
The projective limit $\Gamma = \lim_k \Gamma_k$ exists as a pro-algebraic group: the group of formal automorphims of $(\mathbb C^m,0)$.

\begin{definition}
A $k$-frame in $M$ is the $k$-jet at $0\in \mathbb C^m$ of a germ of biholomorphism from $(\mathbb C^m, 0)$ to $ M$. 
\end{definition}

The set of $k$-frames ${\rm R}_kM$ is a variety endowed of a natural projection onto $M$, $j^k_0 \varphi \mapsto \varphi(0)$, and an action of $\Gamma_k$ by composition on the right side that give to ${\rm R}_kM\to M$ the structure of a principal $\Gamma_k$-bundle. The structures for different orders $k$ are compatible, in the sense that the following diagram:
$$
\xymatrix{\Gamma_{k+1} \times {\rm R}_{k+1}M \ar[r]\ar[d] & {\rm R}_{k+1}M\ar[d] \\
\Gamma_{k} \times {\rm R}_{k}M \ar[r] & {\rm R}_{k}M} \quad 
\xymatrix{ (j^{k+1}_0\psi , j^{k+1}_0\varphi)  \ar[r]\ar[d] & j_0^{k+1}(\psi\circ\varphi)\ar[d] \\
(j^{k}_0\psi , j^{k}_0\varphi) \ar[r] & j_0^k(\psi\circ\varphi)}
$$
is commutative. We have a chain of projections,
$$
\ldots \to {\rm R}_kM \to {\rm R}_{k-1}M \to \ldots \to {\rm R}_1M \simeq {\rm L}({\rm T}M) \to M.
$$
The projective limit ${\rm R}M = \lim_k {\rm R}_kM$ exists as a pro-algebraic bundle and it is principal $\Gamma$-bundle. We have a chain of field extensions:
$$
\mathbb C \subset \mathbb C(M) \subset \mathbb C({\rm R}_1M) \subset \mathbb C({\rm R}_1M)  \subset  \ldots \subset \mathbb C({\rm R}M) = 
\bigcup_{k} \mathbb C({\rm R}_kM)
$$
where elements in $\mathbb C({\rm R}_kM)$ are termed \emph{differential functions} of order $k$. The field $\mathbb C({\rm R}M)$ has an additional structure of a differential field with set of derivations $\Delta = \{\delta_1,\ldots,\delta_m\}$, the set of total derivative operators with respect to the cartesian coordinates $\varepsilon_1,\ldots,\varepsilon_m$ in $\mathbb C^m$. This structure will be called $\Delta$-field. To describe these derivations let us introduce the following functions built from $x_1,\ldots,x_m$ is a transcendence basis of $\mathbb C(M)$ : 
$$ 
\begin{array}{rrcl}
\delta^\alpha x_j :&  {\rm R}M &\rightarrow &\mathbb C  \\
  & \varphi  & \mapsto & \partial^\alpha(x_i(\varphi))(0)
\end{array}
$$
Then we will have that the set
$\{\delta^{\alpha}x_j\}_{1\leq |\alpha|\leq k}$ is a transcendence basis and a system of generators of $\mathbb C({\rm R}_kM)$ over $\mathbb C(M)$, thus 
$$
\mathbb C(R_kM) = \mathbb C(M)(\{\delta^{\alpha}x_j\,\,\colon\,\,j = 1,\ldots,m\,\, |\alpha|\leq k\}).
$$
The total derivative operators find their formal expression in coordinates as:
$$
\delta_j = \sum_{|\alpha|\geq 0} \sum_{\ell=1}^m (\delta_j\delta^{\alpha}x_\ell)\frac{\partial}{\partial (\delta^{\alpha}x_\ell)}.
$$

There is a geometric mechanism of extension of vector fields (see, for instance \cite[\S 1.8 p. 29]{malgrange2010pseudogroupes} variational equations of higher order) such that any vector field $\vec X$ in $M$ extends to a unique $\Gamma_k$-invariant rational vector field $\vec X^{(k)}$ in ${\rm R}_kM$ with the properties:
\begin{itemize}
    \item[(1)] $\vec X^{(k)}$ projects onto $\vec X^{(k-1)}$, and $\vec X^{(0)} = \vec X.$
    \item[(2)] The extension commutes with total derivative operators in the following sense: 
    $$\vec X^{(k+1)} \circ \delta_j = \delta_j \circ \vec X^{(k)}.$$
\end{itemize}
Note that if $\vec X$ is a rational vector field in $M$ then $\vec X^{(k)}$ is a rational vector field in ${\rm R}_kM$.

\begin{example}
Let us consider $M = \mathbb C^m$ with coordinate functions $x_i$. In such case ${\rm R}_k\mathbb C^n$ is an affine space coordinate by functions 
$\delta^{\alpha} x_i$ denoted by $x_{i\colon \alpha}$ for short, with 
$\alpha\in \mathbb Z_{\geq 0}^m$ and $|\alpha|\leq k$.
The function $x_{i\colon \alpha}$ represents the derivative of the coordinate functions of $\mathbb C^n$ with respect to the parameters $\varepsilon$ in $(\mathbb C^m,0)$ evaluated in $0$; so that $x_{j:0} = x_j$. The tuple
$(x_{j:\alpha}$ corresponds to the $k$-jet at $0$ of the polynomial map:
$$(\mathbb C^m,0) \to \mathbb C^m, \quad (\varepsilon_1,\ldots ,\varepsilon_m) \mapsto\left(\sum_{|\alpha|\leq k} \frac{x_{1:\alpha}}{\alpha!} \varepsilon_1^{\alpha_1}\cdots \varepsilon_m^{\alpha_m}, \ldots, 
\sum_{|\alpha|\leq k} \frac{x_{m:\alpha}}{\alpha!} \varepsilon_1^{\alpha_1}\cdots \varepsilon_m^{\alpha_m} \right).$$
Total derivative operators are given by formal series:
$$\delta_i = \sum_{\alpha}\sum_{j=1}^m x_{j:\alpha+\epsilon_i}\frac{\partial}{\partial x_{j:\alpha}}.$$
The $k$-th extension of a vector field $\vec X = \sum f_i\frac{\partial}{\partial x_i}$ is given in coordinates as
$$\vec X^{(k)} = \sum_{i=1}^m \sum_{|\alpha|\leq k} (\delta^\alpha f_i)\frac{\partial}{\partial x_{i:\alpha}}.$$
\end{example}

In the limit we have an extension $\vec X^{(\infty)}$ which can be seen as a derivation  of the field $\mathbb C({\rm R}M)$.
The assignation $\vec X\leadsto \vec X^{(\infty)}$ is compatible with the Lie bracket: $[\vec X_1^{(\infty)},\vec X_2^{(\infty)}] =[\vec X_1,\vec X_2]^{(\infty)}$. Let us consider $\mathfrak X_M$ the Lie algebra of rational vector fields on $M$. The field $\mathbb C({\rm R}M)$ has two natural commuting structures of differential field. On one hand, we consider the set $\Delta = \{\delta_1,\ldots,\delta_m\}$ of total derivative operators that endows it with an structure of $\Delta$-fields. On the other hand the Lie algebra $\mathfrak X_M$ acts by derivations in $\mathbb C({\rm R}M)$ endowing it with an structure of $\mathfrak X_M$-field. 

\subsection{Groupoid of automorphisms of $M$} 

\begin{definition}
\label{df:Aut_kM}
One denotes by  $(s,t)\colon \mathrm{Aut}_k(M)\to M\times M$ be the groupoid of $k$-jets of local invertible biholomorphisms of $M$. 
\end{definition}

$\mathrm{Aut}_k(M)$ acts by composition on the $k$-th frame bundle:
$$
\mathrm{Aut}_k(M) \times_M {\rm R_k}M \to {\rm R_k}M \quad (j_p^k \sigma, j_0^k\varphi)\mapsto j_0^k(\sigma \circ \varphi).
$$
A direct calculus show that the groupoid $\mathrm{Aut}_k(M)$ is, in fact, the groupoid of gauge transformations of the $k$-th frame bundle, $\mathrm{Aut}_k(M)\cong {\rm Iso}({\rm R}_k M)$. Given $j_p^k\sigma\in {\rm Aut}_k(M)$ with source $p$ and target $q$. Let us denote $(j_p^k\sigma)^{(k)}$ the induced $\Gamma_k$-equivariant map from $({\rm R}_kM)_p$ and $({\rm R}_kM)_q$:
$$
\xymatrix{
(\mathbb C^m, 0) \ar[r]^-{\varphi} \ar[rd]_-{\psi}&  (M,p) \ar[d]^-{\sigma} \\ & (M,q)
}\quad (j^k_p\sigma)^{(k)}(j_p^k \varphi) = j_0^k\psi.$$
We also have a projective system:
$$
\mathrm{Aut}_k(M) \to \ldots \to \mathrm{Aut}_k(M) \to {\rm Aut}_0(M) = M\times M
$$
where the projective limit, 
$$
{\rm Aut}(M) = \lim_{\leftarrow} {\rm Aut}_k(M)
$$ 
is the pro-algebraic groupoid of formal non-singular maps. As before, $\mathrm{Aut}(M)\cong {\rm Iso}({\rm R}M)$. Given $\sigma\in {\rm Aut}(M)$ with source $p$ and target $q$ we write $\sigma^{(\infty)}$ for the corresponding $\Gamma$-equivariant map from $({\rm R}M)_p$ to $({\rm R}M)_q$. This map $\sigma^{(\infty)}$ is the limit of a sequence of compatible maps;
$\sigma$ is a sequence $\{j^k_p\sigma_k\}$ with $j_k\sigma_k\in {\rm Aut}_k(M)$ such that $j_p^{\ell}\sigma_k = j_p^\ell\sigma_\ell$ for $\ell\leq k$, in the same way a frame $\varphi$ in $p$ is a sequence $\{j^k_0\varphi_k\}$ with $j_0^{\ell}\varphi_k = j_0^\ell\varphi_\ell$ for $\ell\leq k$. Finally $\sigma^{(\infty)}$ is defined by as the limit of the sequence $\{\sigma_k^{(k)}\}$, that is, 
$\sigma^{(\infty)}(\varphi) = \{j^k_0 (\sigma_k\circ \varphi_k) \}$. In the convergent case, when $\sigma = j_p\bar \sigma$ and $\varphi = j_0\bar \varphi$ where $\bar\sigma$ and $\bar\varphi$ are local biholomorphisms we have simply $\sigma^{(\infty)}(\varphi) = j_0(\bar\sigma\circ\bar\varphi)$.

\subsection{$\mathcal D$-groupoids}

By a closed subset in ${\rm Aut}(M)$ we mean a subset that is closed in the initial topology with respect to all projections ${\rm Aut}(M)\to {\rm Aut}_k(M)$. Such closed subset $Z$ is a sequence of Zariski closed subsets $Z = \{Z_k\}_{k\in\mathbb N}$ such that:
\begin{itemize}
    \item[(a)] $Z_k\subseteq {\rm Aut}_k(M)$
    \item[(b)] $Z_{k+1}$ dominates $Z_k$ by projection.
\end{itemize}
With such definition, a formal jet $j_p\varphi$ is in $Z$ if and only if $j_p^k\varphi\in Z_k$ for all $k$. 

\begin{definition}\label{def:D_groupoid}
We say that $\mathcal G$ ($\simeq \{\mathcal G_k\}_{k\in\mathbb N})$ is a $\mathcal D$-groupoid of transformations of $M$ if:
\begin{itemize}
    \item[(a)] For all $k$, $\mathcal G_k$ is a rational subgroupoid of ${\rm Aut}_kM$.
    \item[(b)] For any $f\in {\rm Inv}(\mathcal G_k)$ and $j = 1,\ldots, m$ we have $\delta_jf\in {\rm Inv}(\mathcal G_{k+1})$.
\end{itemize}
\end{definition}

\begin{remark}
$\mathcal D$-groupoids should be seen as spaces of solutions of certain PDE systems. We say that a local biholomorphisms $\sigma$ between open subset of $M$ in $\mathcal G$ if for all $p$ in its domain of definition $j_p\sigma\in\mathcal G$.
\end{remark}

\begin{example}
Let us consider 
$\omega_m = f(x_1,\ldots,x_n)dx_1\wedge\ldots\wedge dx_m$ a rational $m$-form in $\mathbb C^m$. We may consider the subvariety of ${\rm Aut}_1(\mathbb C(M))$ of $1$-jets of biholomorphisms preserving $\omega_m$,
$$\mathcal G_1 = \left\{j_p^1\sigma\in {\rm Aut}_1(\mathbb C^m)|
\sigma^*(\omega_m)(p) = \omega_m(p)\right\}.$$
Its easy to check that $\mathcal G_1$ is a rational subgroupoid of ${\rm Aut}_1(\mathbb C^m)$. If we introduce coordinates,
$$x_i,y_j,y_{i;j} = \frac{\partial y_i}{\partial x_i}$$
in ${\rm Aut}_1(\mathbb C^m)$ then we can compute the equation of $\mathcal G_1$ obtaining:
\begin{equation}\label{eq:exvol}
\mathcal G_1 = \{(x_i,x_j,y_{i;\epsilon_j})\in {\rm Aut}_1(\mathbb C^m)\,\mid\, f(y_1,\ldots,y_m)\det(y_{i;j}) = f(x_1,\ldots,x_m) \}.
\end{equation}
Note that the equation of $\mathcal G_1$ is a first order PDE where $x_1,\ldots,x_m$ are the independent variables and $y_1,\ldots,y_m$ are the unknown functions. In order to derive higher order equations from \eqref{eq:exvol} there are two different ways. 
\begin{enumerate}
    \item[(a)] The classical way is to differentiate equation \eqref{eq:exvol} with respect to the independent variables (see Appendix \ref{App:A}) to obtain the equations of $\mathcal G_2$, $\mathcal G_3$ and so on. 
    \item[(b)] The other way is to differentiate the rational invariants determining $\mathcal G_1$ with respect to the infinitesimal parameters $\varepsilon_i$ to obtain the invariants of $\mathcal G_2$, $\mathcal G_3$ and so on. In order to compute the invariants of $\mathcal G_1$ in $\mathbb C({\rm R}\mathbb C^m)$ we need to interpret the $m$-form $\omega_m$ as a rational function in ${\rm R_1}\mathbb C^m$. For each $1$-frame $j_0^1\varphi$ in $p\in M$ with $p$ in the domain of $f$ we have,
    $$\omega_m(p) \circ d_0\varphi = \lambda(j_0^1\varphi) d\varepsilon_1\wedge\ldots\wedge d\varepsilon_m.$$
    This gives us the rational function $\lambda$ we are looking for $\lambda = \det(x_{i\colon \epsilon_j})f(x_1,\ldots,x_m)$ and $\mathcal G_1 = {\rm Sym}(\mathbb C(\lambda))$. 
    Therefore we obtain $\mathcal G_k = {\rm Sym}(\mathbb C(\delta^\alpha\lambda)_{0\leq|\alpha|\leq k}).$ 
\end{enumerate}
In order to give a complete geometric description of the whole $\mathcal D$-groupoid $\mathcal G$ let us recall than an element $\sigma = [j^1_p\sigma_1, j^2_p\sigma_2, j^3_p\sigma_3, \ldots]\in {\rm Aut}(M)$ with source $p$ and target $q$ can be seen as a formal isomorphism from $(\mathbb C^m,p)$ to $(\mathbb C^m,q)$. We see $\sigma^*(\omega_m)$ as a formal $m$-form near $p$:
$$\sigma^*(\omega_m) = [\sigma_1^*(\omega_m)(p), j^1_p\sigma_2^*(\omega_m)(p), j^2_p\sigma_3^*(\omega_m)(p), \ldots ]$$
and then we may give the description of $\mathcal G$ as the $\mathcal D$-groupoid of formal symmetries of $\omega_m$,
that is, its elements are the formal isomorphisms $\sigma$ such that $\sigma^*(\omega_m) = j_p\omega_m$.
Moreover, as a rational $m$-form is completely determined by its infinite order jet at any point of its domain it is usual to identify $\omega_m$ with its jet $j_p\omega_m$ and just write: 
$$\mathcal G = \left\{\sigma\in {\rm Aut}(\mathbb C^m)\,|\,
\sigma^*(\omega_m) = \omega_m\right\}.$$
\end{example}

Given a $\mathcal D$-groupoid $\mathcal G$ we define its field of differential invariants as:
$$
{\rm Inv}_\Delta(\mathcal G) = \bigcup_k {\rm Inv}(\mathcal G_k) \, \subseteq \,\, \mathbb C({\rm R}M).
$$
Condition (b) implies that ${\rm Inv}_\Delta(\mathcal G)$ is a $\Delta$-subfield of $\mathbb C({\rm R}M)$. Reciprocally, given a $\Delta$-subfield $\mathbb F$ we can define its $\mathcal D$-groupoid of symmetries ${\rm Sym}_\Delta(\mathbb F) \simeq \{{\rm Sym}_\Delta(\mathbb F)_k\}_{k\in\mathbb N}$ with:
$$
{\rm Sym}_\Delta(\mathbb F)_k = {\rm Sym}(\mathbb F \cap \mathbb C({\rm R}_kM) ).
$$
We have a Galois correspondence between $\mathcal D$-groupoids and their $\Delta$-fields of differential invariants.

\begin{proposition}[$\Delta$-Galois correspondence]\label{pro:Galois_correspondence2}
The assignation $\mathcal G\leadsto {\rm Inv}_\Delta(\mathcal G)$ is a bijective correspondence (and anti-isomorphism of lattices) between the set of $\mathcal D$-groupoids of transformations of $M$ and  $\Gamma$-invariant $\Delta$-subfields of $\mathbb C(P)$ containing $\mathbb C$. Its inverse is given by $\mathbb F\leadsto {\rm Sym}_\Delta(\mathbb F)$
\end{proposition}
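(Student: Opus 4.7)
The plan is to bootstrap the classical Galois correspondence of Proposition \ref{pro:Galois_correspondence1} (applied at each finite jet level $k$ via the identification $\mathrm{Aut}_k(M)\cong {\rm Iso}({\rm R}_kM)$) into the $\Delta$-setting, checking that the two extra pieces of structure — the compatibility of the system $\{\mathcal G_k\}_{k\in\mathbb N}$ under projection, and the $\Delta$-closure condition (b) of Definition \ref{def:D_groupoid} — match on both sides of the correspondence.

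First, given a $\mathcal D$-groupoid $\mathcal G=\{\mathcal G_k\}$, I would set $\mathbb F_k={\rm Inv}(\mathcal G_k)$; each $\mathbb F_k$ is a $\Gamma_k$-invariant subfield of $\mathbb C({\rm R}_kM)$ containing $\mathbb C$ by Proposition \ref{pro:Galois_correspondence1}. The projection $\mathcal G_{k+1}\to \mathcal G_k$ (implicit in the fact that $\mathcal G$ is a sequence in the projective system) dualizes, via the Galois correspondence at each level, into an inclusion $\mathbb F_k\subseteq \mathbb F_{k+1}$, so that $\mathbb F={\rm Inv}_\Delta(\mathcal G)=\bigcup_k\mathbb F_k$ is a well-defined $\Gamma$-invariant subfield of $\mathbb C({\rm R}M)$. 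Condition (b) of Definition \ref{def:D_groupoid} says exactly that $\delta_j\mathbb F_k\subseteq \mathbb F_{k+1}\subseteq \mathbb F$, hence $\mathbb F$ is a $\Delta$-subfield.

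Conversely, given a $\Gamma$-invariant $\Delta$-subfield $\mathbb F\subseteq \mathbb C({\rm R}M)$, set $\mathbb F_k=\mathbb F\cap \mathbb C({\rm R}_kM)$. The $\Gamma$-action on $\mathbb C({\rm R}_kM)$ factors through $\Gamma_k$, so $\mathbb F_k$ is $\Gamma_k$-invariant, and I apply Proposition \ref{pro:Galois_correspondence1} to produce $\mathcal G_k={\rm Sym}(\mathbb F_k)$. I then need to verify that $\{\mathcal G_k\}$ is a $\mathcal D$-groupoid: the inclusion $\mathbb F_k\subseteq \mathbb F_{k+1}$ yields a morphism $\mathcal G_{k+1}\to \mathcal G_k$, and dominance of this projection follows from the fact that $\mathbb F_k=\mathbb F_{k+1}\cap \mathbb C({\rm R}_kM)$, together with the anti-isomorphism of lattices at the finite level. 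Condition (b) of Definition \ref{def:D_groupoid} is immediate because $\delta_j\mathbb F_k\subseteq \mathbb F\cap \mathbb C({\rm R}_{k+1}M)=\mathbb F_{k+1}$, which by the Galois correspondence translates into ${\rm Inv}(\mathcal G_{k+1})\supseteq \delta_j{\rm Inv}(\mathcal G_k)$.

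Bijectivity and the anti-lattice structure follow level by level from Proposition \ref{pro:Galois_correspondence1}: the two round-trips $\mathcal G\leadsto \mathbb F\leadsto {\rm Sym}_\Delta(\mathbb F)$ and $\mathbb F\leadsto \mathcal G\leadsto {\rm Inv}_\Delta(\mathcal G)$ reduce, at each level $k$, to the identities already known, and intersections/joins of $\mathcal D$-groupoids correspond to compositums/intersections of $\Delta$-fields because the same is true level-wise. The delicate point — the one I expect to be the main technical obstacle — is the compatibility between passing to the union $\bigcup_k \mathbb F_k$ and passing back to the intersections $\mathbb F\cap\mathbb C({\rm R}_kM)$: one must check that if $\mathbb F$ arises from a $\mathcal D$-groupoid then $\mathbb F\cap \mathbb C({\rm R}_kM)$ recovers ${\rm Inv}(\mathcal G_k)$ and not something strictly larger. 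This is essentially a statement that the $\Gamma_k$-invariant field obtained by taking the union and re-intersecting does not pick up spurious invariants, which ultimately rests on the dominance condition (b) for rational groupoids ensuring that ${\rm Sym}$ and ${\rm Inv}$ are genuine inverses at each finite order.
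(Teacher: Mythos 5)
Your proposal is correct and follows essentially the same route as the paper's (very terse) proof: reduce everything to the order-$k$ Galois correspondence of Proposition \ref{pro:Galois_correspondence1} by intersecting $\mathbb F$ with $\mathbb C({\rm R}_kM)$ in one direction and taking the union of the fields ${\rm Inv}(\mathcal G_k)$ in the other. The ``delicate point'' you flag --- that $\mathbb F\cap\mathbb C({\rm R}_kM)$ recovers exactly ${\rm Inv}(\mathcal G_k)$ and nothing larger --- is indeed the only content beyond the finite-order statement, and your resolution via dominance of the projections $\mathcal G_{k+1}\to\mathcal G_k$ is the right one, merely left implicit in the paper.
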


\begin{proof}
Note that if $\mathbb F$ is a $\Gamma$-invariant $\Delta$-subfield of $\mathbb C({\rm R}M)$ containing $\mathbb C$ then the intersection $\mathbb F\cap \mathbb C({\rm R}_kM)$ is a $\Gamma_k$-invariant subfield of $\mathbb C({\rm R}_kM)$ containing $\mathbb C$. The proposition follows by application of Proposition \ref{pro:Galois_correspondence1}.
\end{proof}

Le $\mathcal G =\{\mathcal G\}_k$ be a $\mathcal D$-groupoid in $M$. By a finiteness theorem of Kolchin \cite[Proposition 14, p 112]{kolchin1973} the $\Delta$-field ${\rm Inv}_{\Delta}(\mathcal G)$ is $\Delta$-finitely generated. There is a minimum $r$ such that ${\rm Inv}_{\Delta}(\mathcal G)$ in $\Delta$-generated by ${\rm Inv}(\mathcal G_r)$. This minimum $r$ is the order of the $\mathcal D$-groupoid $\mathcal G$. Moreover, let $U\subset M$ be a Zariski open subset such that $\mathcal G_r|_U$ is a groupoid,  we have that $\mathcal G|_U$ is a groupoid.

\subsection{$\mathcal D$-Lie algebras} 

Let us now consider the jet bundle $J({\rm T}M/M)\to M$. Its elements are formal vector fields in $M$ 
(that is, continuous derivations of the completed local rings $\hat{\mathcal O}_{M,p}$ for $p\in M$)
and thus it is a bundle by Lie algebras with the usual Lie bracket of formal vector fields. Therefore, the space of rational sections
$$
\Gamma_{\rm rat}(J({\rm T}M/M)) = \lim_{\leftarrow} \Gamma_{\rm rat}(J_k({\rm T}M/M))
$$
is a $\mathbb C(M)$-Lie algebra.

\begin{remark}
There is also a Lie algebroid structure for each order $J_k({\rm T}M/M)$, and then a natural Lie algebra structure in each space of sections $\Gamma_{\rm rat}(J_k({\rm T}M/M))$ given by the so-called Spencer bracket. This Lie algebra structure is different from the one we consider, but they coincide along holonomic sections. 
\end{remark}

\begin{definition}
A rational linear sub-bundle of $J_k({\rm T}M/M)$ is an irreducible Zariski closed subset $V\subset J_k({\rm T}M/M)$ such that there is a Zariski open subset $U\subseteq M$ such that $V|_U\to U$ is a vector bundle. 
\end{definition}

\begin{example}
Order $0$ rational linear sub-bundles of $J_0(TM/M) = TM$ are singular distributions of vector fields $\mathscr F\subset TM$. If $U$ is the complement of the singular set of $\mathscr F$ then $\mathscr F|_U\to U$ is a vector bundle. 
\end{example}

As linear bundles are rationally trivial, a rational linear sub-bundle is characterized by its space of rational sections $\Gamma_{\rm rat}(V_k)\subset \Gamma_{\rm rat}(J_k({\rm T}M/M))$. There is a natural bijective correspondence between rational linear sub-bundles of $J_k({\rm T}M/M)$ and $\mathbb C(M)$-subspaces of $\Gamma_{\rm rat}(J_k({\rm T}M/M)).$ In the projective limit $J({\rm T}M/M)$ we consider the initial topology. Thus, a closed subset $V\subset J({\rm T}M/M)$ is a sequence $V = \{V_k\}_{k\in \mathbb N}$ with $V_k$ Zariski closed in $J_k({\rm T}M/M)$ and such that 
$V_{k+1}$ dominates $V_{k}$ by projection. 

\begin{definition}
A rational linear sub-bundle of $J({\rm T}M/M)$ is an irreducible closed subset $V = \{V_k\}_{k\in \mathbb N}$ such that for all $k$ we have that $V_k$ is a rational linear sub-bundle of $J_k({\rm T}M/M)$.
\end{definition}

Analogously, there is a natural bijective correspondence between rational linear sub-bundles of $J({\rm T}M/M)$ and $\mathbb C(M)$-subspaces of $\Gamma_{\rm rat}(J({\rm T}M/M))$. In what follows we will define the notion of $\mathcal D$-Lie algebra, a certain kind of $\mathbb C(M)$-subspace of \linebreak $\Gamma_{\rm rat}(J({\rm T}M/M))$; but we will apply the notion indistinctly to rational linear sub-bundles of $J({\rm T}M/M)$.

\begin{definition}
The ring of rational differential operator in $M$ is the ring $\mathcal D_M$ of $\mathbb C$-linear endomorphisms of $\mathbb C(M)$ generated by:
\begin{itemize}
    \item[(a)] $\mathbb C(M)$ acting by multiplication,  that is, each element $h\in\mathbb C(M)$ is seen as an endomorphism $h\colon \mathbb C(M)\to \mathbb C(M)$, $f\mapsto hf$.
    \item[(b)] The Lie algebra of rational vector fields $\mathfrak X_M$,  that is, rational vector field $\vec X\in\mathfrak X_M$ is seen as a derivation $\vec X\colon \mathbb C(M)\to \mathbb C(M)$, $f\mapsto \vec Xf$. 
\end{itemize}
\end{definition}

We set the degree of rational functions equal to $0$ and that of vector fields to be $1$. Thus, $\mathcal D_M$ is a non commutative graded ring, for $h\in\mathbb C(M)$ and $\vec X\in \mathfrak X_M$ we have,
$$f\circ \vec X = f\vec X, \quad \vec X \circ f = \vec Xf + f\vec X.$$
The inclusion $\mathbb C(M)\subset \mathcal D_M$ endows $\mathcal D_M$ with a $\mathbb C(M)$-bimodule structure with a
left ($f\circ\theta\colon g \mapsto f(\theta(g))$) and a right ($\theta\circ f\colon g \mapsto \theta(fg)$) multiplication.

Let $\Omega_M^1 = \mathfrak X_M^*$ be the $\mathbb C(M)$-space of rational $1$-forms in $M$. We define the space of rational differential operators 
from $\mathfrak X_M$ to $\mathbb C(M)$ as the tensor product:
$$
{\rm Diff}(\mathfrak X_M,\mathbb C(M)) = \mathcal D_M\tens_{\mathbb C(M)} \Omega_M^1.
$$
Note that the tensor product is constructed by means of the right $\mathbb C(M)$-module structure in $\mathcal D_M$. Thus, ${\rm Diff}(\mathfrak X_M,\mathbb C(M))$ is a left $\mathcal D_M$-module. \\

Differential operators can be seen as rational functions on $J({\rm T}M/M)$ that are linear along fibers of the projection $J({\rm T}M/M)\to M$. 
The coupling of a rational differential operator and a formal vector field $\vec X\in J({\rm T}M/M)$ with base point $p$ is given by $(L \otimes \omega)(\vec X) = L(\omega(X))(p)$. Note that, if $p$ is in the domain of $\omega$ and $L$ and $\omega(X)$ and $L(\omega(\vec X))$ are formal functions at $p$. When we couple a rational differential operator with a rational section of $J({\rm T}M/M)$ we obtain a rational function. Moreover we have a duality:
$$
{\rm Diff}(\mathfrak X_M,\mathbb C(M))^* = \Gamma_{\rm rat}(J({\rm T}M/M)).
$$ 

\begin{definition}\label{def:DLie_alg}
Let $\mathcal L = \{\mathcal L_k\}_{k\in \mathbb N}$ be a rational linear sub-bundle of $J({\rm T}M/M)$. We say that $\mathcal L$ is a $\mathcal D$-Lie algebra of transformations of $M$ if:
\begin{itemize}
    \item[(a)] Its space of sections $\Gamma_{\rm rat}(\mathcal L)$ is a Lie subalgebra of $\Gamma_{\rm rat}(J({\rm T}M/M))$.
    \item[(b)] Its $\mathcal D_M$-module of vanishing differential operators:
    $$
    {\rm ann}(\mathcal L) = \{\theta\in {\rm Diff}(\mathfrak X_M,\mathbb C(M))\,\colon \, \forall \vec X \in \mathcal L \,\,\, \theta(\vec X) = 0 \}
    $$ 
    is a $\mathcal D_M$-submodule of ${\rm Diff}(\mathfrak X_M,\mathbb C(M))$.
\end{itemize}
\end{definition}

\begin{remark}
$\mathcal D$-Lie algebras should be seen as spaces of solutions of certain linear PDE systems. Let $\mathcal L = \{\mathcal  L_k\}_{k\in \mathbb N}$ be a $\mathcal D$-Lie algebra in $M$ seen as a linear sub-bundle of $J({\rm T}M/M)$. We say that a local analytic vector field $\vec X$ is in $L$ if for all $p$ in its domain of definition $j_p\vec X\in\mathcal L$ (or equivalently, for all $p$ and $k$ $j_p^k\vec X\in \mathcal L_k$). 
\end{remark}

\begin{remark}
For the sake of simplicity we presented the definition of $\mathcal D$-Lie algebra in terms of $\mathbb C(M)$-vector spaces instead as coherent sheaves of differential operators as it is done in \cite{malgrange2001} \S 3, \cite{malgrange2010pseudogroupes} \S 5.6 or \cite{le2010algebraic} \S 4. However, it is clear that the $\mathbb C(M)$-space of rational differential operators ${\rm ann}(L)$ in Definition \ref{def:DLie_alg} can be seen as a coherent $\mathcal D$-module of differential operators. In particular we have the following results:
\begin{itemize}
    \item[(a)] The $\mathcal D$-Lie algebra and its module of vanishing differential operators ${\rm ann}(\mathcal L)$ determine each other.
    \item[(b)] There is an open subset $U\subset M$ such that for all $k$ $L_k|_U$ is a linear bundle over $U$ (Proposition 4.1 in \cite{le2010algebraic}).
\end{itemize}
\end{remark}

\subsection{$\mathcal D$-Lie algebra of a $\mathcal D$-groupoid}

Given a $\mathcal D$-groupoid of transformations of $M$ its Lie algebra is usually defined by its infinitesimal generators: vector fields $\vec X$ such their exponential ${\rm exp}(t\vec X)$ is in $\mathcal G$ wherever it is defined.


The following definition, using differential invariants, is easier to use in the algebraic situation than the usual definition. The compatibility of the prolongation of $\vec X$ to $\vec X^{\infty}$ with the flow and the $\Delta$-Galois correspondence \ref{pro:Galois_correspondence2} implies the equivalence of the two point of view.

\begin{definition}
\label{def:lie_D_algebra}
Let $ \mathcal G = \{\mathcal G_k\}_{k\in \mathbb N}$  be a $\mathcal D$-groupoid of transformations of $M$. 
The $\mathcal D$-Lie algebra of $\mathcal G$ is 
$$
\mathrm{Lie}\, (\mathcal G) =  \big\{ \vec X\in J({\rm TM}/M) \,\colon \,\vec X^{(\infty)} {\rm Inv}_\Delta(\mathcal G) = 0 \big\}
$$
\end{definition}

It is clear that as a sequence of rational linear sub-bundles we have ${\rm Lie}(\mathcal G) = \{{\rm Lie}(\mathcal G_k)\}_{k\in \mathbb N}$ where: 
$$
{\rm Lie}(\mathcal G_k) = \big\{j_p^k X\in J_k({\rm TM}/M) \,\colon \,X^{(k)} {\rm Inv}(\mathcal G_k) = 0 \big\}
$$
Because $[\vec X_1^{(\infty)}, \vec X_2^{(\infty)}] =[\vec X_1, \vec X_2]^{(\infty)}$, the space of rational sections of ${\rm Lie}(\mathcal G)$ is closed by Lie bracket. Therefore, it is a $\mathcal D$-Lie algebra. 

\begin{example}
Let us consider $\mathcal G$ the $\mathcal D$-lie groupoid of formal maps preserving an $m$-form $\omega_m$:
$$\mathcal G = \{\sigma\in {\rm Aut}(M) \,\mid\, \sigma^*(\omega_m) = \omega_m\}$$
its associated $\mathcal D$-Lie algebra is the $\mathcal D$-Lie algebra of formal infinitesimal symmetries of $\omega_m$:
$${\rm Lie}(\mathcal G) = \{\vec X\in J({\rm TM}/M) \,\mid\, {\rm Lie}_{\vec X}\omega_m = 0\}.$$
where, given a formal vector field $\vec X = [\vec X_0(p), j^1_p\vec X_1, j^1_p\vec X_1, \ldots ]$ the Lie derivative of $\omega_m$ is defined as a formal $m$-form near $p$,
$${\rm Lie}_{\vec X}(\omega_m) = [{\rm Lie}_{\vec X_1}(\omega_m)(p), j^1_p{\rm Lie}_{\vec X_2}(\omega_m)(p),
j^2_p{\rm Lie}_{\vec X_3}(\omega_m)(p), \ldots].$$
Moreover, from previous works \cite[Theorem 1.3.2]{casale2004groupoide}  we know that transitive $\mathcal D$-groupoids are groupoids of formal symmetries of geometric structures. Their associated $\mathcal D$-Lie algebras are the $\mathcal D$-Lie algebras of formal infinitesimal symmetries of such geometric structures.
\end{example}

\section{Malgrange-Galois $\mathcal D$-groupoid}\label{s:Galois}

\subsection{Malgrange-Galois $\mathcal D$-groupoid of a vector field}
In this section $M$ is an affine smooth algebraic variety of dimension $m$. Let $\vec X$ be a rational vector field on $M$. Let us recall that for all $k$ the $k$-th extension $\vec X^{(k)}$ is a $\Gamma_k$-invariant vector field in $\mathbb C({\rm R}_kM)$ and $\vec X^{(\infty)}$ is a $\Gamma$-invariant derivation of $\mathbb C({\rm R}M)$.

\begin{definition}
The field of rational differential invariants of $\vec X$ is the field of constants of the derivation $\vec X^{(\infty)}$:
$$\mathbb C({\rm R}M)^{\vec X} = \{f\in \mathbb C({\rm R}M) \,\colon\, \vec X^{(\infty)}f = 0\}.$$
\end{definition}

Let us list some elementary self evident properties of $\mathbb C({\rm R}M)^{\vec X}$:
\begin{enumerate}
\item[(a)] As the derivation $\vec X^{\infty}$ commutes with the total derivative operators $\Delta = \{\delta_1,\ldots,\delta_m\}$ it follows that $\mathbb C({\rm R}M)^{\vec X}$ is a $\Delta$-subfield.
\item[(b)] Since for all $k$, $\vec X^{\infty}|_{\mathbb C({\rm R}_kM)} = \vec X^{(k)}$ we have that the field of rational differential invariants of order $\leq k$,
$$\mathbb C({\rm R}_kM)^{\vec X} = \mathbb C({\rm R}_kM)\cap \mathbb C({\rm R}M)^{\vec X}$$
is the field of rational first integrals of the vector field $\vec X^{(k)}$.
\item[(c)] Clearly:
$$\mathbb C({\rm R}M)^{\vec X} = \bigcup_{k} \mathbb C({\rm R}_kM)^{\vec X}.$$
\item[(d)] As $\vec X^{(\infty)}$ is $\Gamma$-invariant the $\Delta$-field of rational differential invariants $\mathbb C({\rm R}M)^{\vec X}$ is $\Gamma$-invariant.
\end{enumerate}

\begin{definition}\label{df:Mal_groupoid}
The Malgrange-Galois $\mathcal D$-groupoid of $\vec X$ is: 
$${\rm Mal}(\vec X)= {\rm Sym}_\Delta(\mathbb C({\rm R}M)^{\vec X}),$$  the $\mathcal D$-Lie groupoid corresponding by means of the $\Delta$-Galois correspondence (Proposition \ref{pro:Galois_correspondence2}) to the $\Delta$-field $\mathbb C({\rm R}M)^{\vec X}$. 
\end{definition}

The reader may be aware that this definition looks different from that in \cite{malgrange2001, malgrange2010pseudogroupes}, however they are equivalent, as discussed in appendix A. Indeed, the equivalence between three different definitions of ${\rm Mal}(\vec X)$, including Definition \ref{df:Mal_groupoid}
can be found in \cite[Th\'eor\`eme 3.16]{casaledavy2020}.

Computation of Malgrange-Galois $\mathcal D$-groupoid is in general very difficult. In \cite{casale2011}, the reader may found explicit examples written in coordinates such as the computation for linear differential equations or for first order differential equation. An easy case is given in the example \ref{ex:integrable} below.

\begin{remark}
Applying the definition of ${\rm Sym}_\Delta(\mathbb C({\rm R}M)^{\vec X})$ we may write a direct description of the Malgrange-Galois groupoid:
$${\rm Mal}(\vec X) = \left\{\sigma\in {\rm Aut}(M)\,\colon\,  
\forall f\in\mathbb C({\rm R}M)^{\vec X}\,\,  f \circ \sigma^{(\infty)}= f|_{({\rm} RM)_{s(\sigma)}}
\right\},$$
the Malgrange-Galois groupoid of $\vec X$ is the $\mathcal D$-groupoid of transformations of $M$ that fixes the rational differential invariants of $\vec X$. Remember that $\sigma^{(\infty)}$ is an ismorphism of ${\rm R}M_{s(\sigma)}$ on ${\rm R}M_{t(\sigma)}$.
\end{remark}

\begin{example}{\label{ex:integrable}}
The simplest Malgrange-Galois group is that of a completely integrable vector field. Let us consider $M = \mathbb C^m$
$\vec X = \partial_{x_1}$. In such case we have $\mathbb C(M)^{\vec X} = \mathbb C(x_2,\ldots,x_m)$. Incidentally the extension of $\vec X$ has the same coordinate expression $\vec X^{(\infty)} ={\partial}_{x_1}$ and therefore all the functions $x_{i:\alpha}$ with $|\alpha|>1$ are rational differential invariants of $\vec X$. Elements $\sigma\in {\rm Aut}(\mathbb C^m)$ preserving all the differential functions $x_{i:\alpha}$ is necessarily jets of a translations. Additionally, the invariance of $x_2,\ldots,x_m$ implies that such translation is done along the axis $x_1$. Therefore:
$${\rm Mal}(\vec X) = \{ j_p\tau_\lambda\,\mid p\in \mathbb C^m, \lambda\in \mathbb C\}$$
where $\tau_\lambda \colon (x_1,x_2\ldots,x_n)\to (x_1+\lambda,x_2\ldots,x_n)$ is the translation of magnitude $\lambda$ along the $x_1$ axis. 
\end{example}

If is clear that as a sequence of rational subgroupoids of ${\rm Aut}_k(M)$ we have ${\rm Mal}(\vec X) = \{{\rm Mal}_k(\vec X)\}_{k\in\mathbb N}$ where 
${\rm Mal}_k(\vec X)$ is the rational subgroupoid of ${\rm Aut}_k(M)$ whose field of invariants is $\mathbb C({\rm R}_kM)^{\vec X}$. Therefore:
$${\rm Mal}_k(\vec X) = {\rm Gal}(\vec X^{(k)}).$$
This ${\rm Mal}_k(\vec X)$ is termed the Malgrange-Galois groupoid of order $k$ of $\vec X$.

\subsection{Specialization theorem}
\label{sec:specialization_theo} Let us examine a parametric version of nonlinear Galois theory and a specialization result stated in \cite[\S4]{casaledavy2020}. We explain the relations between the formalism of \cite{casaledavy2020} and ours and state the theorem referring to \cite{casaledavy2020} for the proof.

Let $\rho\colon M\to S$ be a surjective smooth map of smooth affine varieties with smooth irreducible fibers of dimension $r$. For each $s\in S$ we consider $M_s= \rho^{-1}(\{s\})$. Let $\vec X$ be a rational vector field tangent to $\rho$ \emph{i.e.} such that $d\rho(\vec X)=0$. We assume (by replacing $M$ by an affine open subset if necessary) that $\vec X$ restricts to a rational vector field $\vec X|_{M_s}$ at each fiber $M_s$. The bundle of partial frames of $M$ with respect to $\rho$ of order $k$, is defined as
$$
\mathrm{R}_k(M/S) = \bigcup_{s\in S} {\mathrm R}(M_s). 
$$
which turns out to be an affine variety and a principal bundle over $\Gamma'_k = \textrm{Aut}_k(\mathbb C^r ,0)$ with $r = \dim(M/S)$. The groupoid of gauge isomorphisms of ${\rm R}(M/S)$ turn out to be:
$${\rm Aut}_k(M/S) = \bigcup_{s\in S} {\rm Aut}_k(M|_s),$$
which is an algebraic groupoid on $M$ of transformations between fibers of $\rho$.
The rational vector field $\vec X$ can be extended to a $\Gamma_k'$-invariant rational vector field $(\vec X/S)^{(k)}$ in ${\rm R}_k(M/S)$
$$(\vec X/S)^{(k)}(p) = \left (\vec X|_{M_{\rho(p)}}\right)^{(k)}.$$
As in the general theory, we may take the limit in $k$ obtaining the bundle of partial frames of $M$ with respect to $\rho$, 
$$
\mathrm{R}(M/S) = \bigcup_{s\in S} {\mathrm R}(M_s). 
$$
which turns out to be a principal bundle over $\Gamma_k = \textrm{Aut}(\mathbb C^r ,0)$. The field of rational functions
$\mathbb C({\rm R}(M/S))$ is also endowed with the total derivative operators $\Delta' =\{\delta_1,\ldots,\delta_r\}$ and it is a $\Delta'$-field. The derivation ${(\vec X/S)}^{(\infty)}$ defined as,
$$(\vec X/S)^{(\infty)}f = (\vec X/S)^{(k)}f\quad \mbox{for} \quad f\in \mathbb C({\rm R}_k(M/S)),$$
commutes with $\Delta'$. The groupoid of gauge isomorphisms\footnote{It can be seen as a quotient of the algebraic subgroupoid ${\rm Sym}_{\Delta}(\mathbb C\langle \rho^*\mathbb C(S)\rangle_{\Delta}) \subset {\rm Aut}_k(M)$)
} of ${\rm R}(M/S)$ is,
$${\rm Aut}(M/S) = \bigcup_{s\in S} {\rm Aut}(M_s).$$
The partial Malgrange groupoid is defined for each order,
$$
\mathrm{Mal}(\vec X/S)= \lim_{\leftarrow} \mathrm{Gal}((\vec X/S)^{(k)}).
$$
For each $s\in S$ we have that $\textrm{Aut}( M_s)$ is a Zariski closed subset of $\mathrm{Aut}(M/S)$. Thus, we can speak of the restriction of the partial Galois groupoid to a fibre,
$$
\mathrm{Mal}(\vec X/S)|_{M_s} = 
\mathrm{Mal}(\vec X/S) \cap \textrm{Aut}(M_s).
$$
Note that this restriction is $\mathcal D$-groupoid in $M$ for generic values of $s$.
The following is one of the statements in \cite[Th\'eor\`eme 4.8]{casaledavy2020} \footnote{The reader should be aware that the notation in the reference is different. There the Malgrange-Galois groupoid appears as ${\rm Gal}$ and not ${\rm Mal}$.}.

\begin{theorem}
\label{th:specialisation}
For all $s\in S$,
$\mathrm{Mal}(\vec X|_{M_s})\subset \mathrm{Mal}(\vec X/S)|_{M_s}.$
\end{theorem}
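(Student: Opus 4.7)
The plan is to exploit the $\Delta$-Galois correspondence (Proposition~\ref{pro:Galois_correspondence2}) to reduce the inclusion of $\mathcal D$-groupoids to the reverse inclusion of their fields of differential invariants. Working order by order, it suffices to show that for each $k$ every rational first integral of $(\vec X/S)^{(k)}$ on $\mathrm R_k(M/S)$ restricts to a rational first integral of $(\vec X|_{M_s})^{(k)}$ on $\mathrm R_k(M_s)$, whenever this restriction is defined.

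The key geometric fact I would isolate first is the compatibility identity
\[
(\vec X/S)^{(k)}\big|_{\mathrm R_k(M_s)} = (\vec X|_{M_s})^{(k)}.
\]
This is built into the construction of the partial extension: since $\vec X$ is tangent to $\rho$, its prolongation to a frame bundle over a fiber depends only on the restriction of $\vec X$ to that fiber, and the partial frame bundle is by construction the union of the fiberwise frame bundles. From this identity it follows immediately that for any $F\in\mathbb C(\mathrm R_k(M/S))^{\vec X/S}$ whose restriction $F|_{\mathrm R_k(M_s)}$ is defined we have $(\vec X|_{M_s})^{(k)}\big(F|_{\mathrm R_k(M_s)}\big)=0$, so that $F|_{\mathrm R_k(M_s)}\in\mathbb C(\mathrm R_k(M_s))^{\vec X|_{M_s}}$.

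Next I would take $\sigma\in\mathrm{Mal}(\vec X|_{M_s})$, regarded via the natural inclusion $\mathrm{Aut}(M_s)\hookrightarrow\mathrm{Aut}(M/S)$, and verify the membership $\sigma\in\mathrm{Mal}(\vec X/S)$ directly from the description
\[
\mathrm{Mal}(\vec X/S)=\Big\{\sigma\in\mathrm{Aut}(M/S):\forall F\in\mathbb C(\mathrm R(M/S))^{\vec X/S},\ F\circ\sigma^{(\infty)}=F\Big\}.
\]
Given such an $F$, by the previous step $F|_{\mathrm R(M_s)}$ is a differential invariant of $\vec X|_{M_s}$, hence by the defining property of $\mathrm{Mal}(\vec X|_{M_s})$ it is preserved by $\sigma^{(\infty)}$; restricting the equation to $\mathrm R(M_s)_{s(\sigma)}$ gives exactly $F\circ\sigma^{(\infty)}=F$ on the source fiber. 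Since $\sigma$ already lies in $\mathrm{Aut}(M_s)$, this places it in $\mathrm{Mal}(\vec X/S)|_{M_s}$.

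The main obstacle is the handling of rational invariants of $\vec X/S$ whose pole or indeterminacy locus might contain the entire fiber $\mathrm R_k(M_s)$, so that the naive restriction does not make sense. For generic $s\in S$ this never happens and the proof is immediate; for arbitrary $s$ one needs to argue that the subfield of invariants that do restrict still cuts out $\mathrm{Mal}(\vec X/S)|_{M_s}$ inside $\mathrm{Aut}(M_s)$. The clean way is to pick, for each rational invariant $F$, a nonzero regular multiple whose restriction to the fiber is nonzero (using smoothness and dominance of $\mathrm R_k(M/S)\to S$ to guarantee that $\mathrm R_k(M_s)$ is not contained in any proper closed subset coming from $S$), and then apply the previous argument to that multiple. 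This technical step is the content that one must borrow from \cite[Théorème 4.8]{casaledavy2020}; the rest of the proof is a direct application of the Galois correspondence.
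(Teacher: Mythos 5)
The paper does not actually prove Theorem \ref{th:specialisation}: it is one of the statements of \cite[Th\'eor\`eme 4.8]{casaledavy2020}, and the surrounding text of \S\ref{sec:specialization_theo} only translates between the two formalisms before "referring to \cite{casaledavy2020} for the proof." So there is no internal proof to compare against, and what must be judged is whether your argument stands on its own. Your two main ingredients are correct and are indeed the right ones: the identity $(\vec X/S)^{(k)}|_{\mathrm R_k(M_s)}=(\vec X|_{M_s})^{(k)}$ is literally the paper's definition of the partial prolongation, and it does imply that every rational invariant of $\vec X/S$ admitting a restriction to $\mathrm R(M_s)$ restricts to an invariant of $\vec X|_{M_s}$. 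Combined with the $\Delta$-Galois correspondence this settles the inclusion for $s$ in a dense open subset of $S$.

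The gap is in the passage to arbitrary $s$, and you have located only half of it. Both $\mathrm{Sym}(F)$ and $\mathrm{Mal}(\vec X/S)=\bigcap_F \mathrm{Sym}(F)$ are Zariski closures of their loci of honest symmetries, and (by condition (b) in the definition of a rational groupoid) these objects are controlled only over a dense open subset of the base. Consequently $\mathrm{Mal}(\vec X/S)\cap\mathrm{Aut}(M_s)$ for special $s$ is the fiber of a specialization, which may be strictly smaller than the subset of $\mathrm{Aut}(M_s)$ cut out by the naively restricted relations $F\circ\sigma^{(\infty)}=F$. Your argument places $\sigma\in\mathrm{Mal}(\vec X|_{M_s})$ in the latter set, not in the former: the description of $\mathrm{Mal}(\vec X/S)$ by the equations "$F\circ\sigma^{(\infty)}=F$ for all invariants $F$" that you invoke is only valid at the generic point of $S$. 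The difficulty is most visible exactly in the case you flag, where the polar locus of $F$ contains the whole fiber $\mathrm R(M_s)$: there $\mathrm{Sym}(F)\cap\mathrm{Aut}(M_s)$ is the limit of the symmetry groupoids of $F$ over nearby fibers, and replacing $F$ by a regular multiple changes the function and its symmetry groupoid rather than computing that limit; it does not show that a given element of $\mathrm{Mal}(\vec X|_{M_s})$ arises as a limit of symmetries over nearby fibers. This closure-under-specialization problem is precisely the content of \cite[Th\'eor\`eme 4.8]{casaledavy2020} and of \cite{Damien2016specialisation}, so your write-up should present the final step as an appeal to that result (which is what the paper itself does), rather than as a removable technicality followed by "a direct application of the Galois correspondence."
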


\subsection{Projection theorem}

Let $\pi\colon P\to M$ be a $G$-principal bundle. Assume we have a normal algebraic subgroup $K$ of $G$ and write $\overline{G} =G/K$. Let us consider  $q\colon P\to \overline{P} = P/K$, $p\mapsto \overline{p}$. Then $\overline P$ is a $\overline{G}$-principal bundle over $M$. The actions commute: $\overline{pg} = \overline{p}\overline{g}$. There is also a projection $q_*$ of groupoids,
$$
\xymatrix{ 
\mathrm{Iso}\, P \ar[r]^-{q_*} \ar[dr]_{s,t} 
& \mathrm{Iso}\, \overline{P}\ar[d]^{s,t} 
\\
&M
}
$$
where the isomorphism $\sigma_{p,q}$ (that sends $p\in P$ to $q\in P$) is sent to $\sigma_{\overline{p}, \overline{q}}$.
\begin{theorem}
\label{th:projection_GGal_theorem}
Let $\vec X$ be a rational $G$-invariant vector field in $P$. Then
$$q_*({\rm Gal}(\vec X)) = {\rm Gal}(q_*\vec X).$$
\end{theorem}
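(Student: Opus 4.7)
The plan is to prove the identity by reduction to the Galois correspondence of Proposition \ref{pro:Galois_correspondence1}. Since both $q_*({\rm Gal}(\vec X))$ and ${\rm Gal}(q_*\vec X)$ are (or will be shown to be) rational subgroupoids of ${\rm Iso}(\bar P)$, it will suffice to check that they have the same field of rational invariants inside $\mathbb C(\bar P)$.

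First, I would identify $\mathbb C(\bar P)$ with the $K$-invariant subfield $\mathbb C(P)^K$ of $\mathbb C(P)$ via the pullback $q^*$. Under this identification, the $\bar G$-action on $\mathbb C(\bar P)$ is the restriction of the $G$-action to $\mathbb C(P)^K$ (well defined since $K$ acts trivially on its invariants). Since $\vec X$ is $G$-invariant, hence $K$-invariant, $\vec X$ preserves $\mathbb C(P)^K$, and the restriction of $\vec X$ to this subfield is precisely the derivation $q_*\vec X$ on $\mathbb C(\bar P)$. This gives
\begin{equation*}
\mathbb C(\bar P)^{q_*\vec X} \;=\; \mathbb C(P)^{\vec X} \cap \mathbb C(P)^K.
\end{equation*}

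Next, I would establish a general compatibility of the Galois correspondence with the projection: for any rational subgroupoid $\mathcal G \subset {\rm Iso}(P)$,
\begin{equation*}
{\rm Inv}(q_*\mathcal G) \;=\; {\rm Inv}(\mathcal G)\cap \mathbb C(\bar P).
\end{equation*}
One inclusion is clear: if $f\in \mathbb C(\bar P)$ is invariant under every $q_*\sigma$, then $q^*f$ is invariant under every $\sigma\in\mathcal G$ and is also $K$-invariant by construction. The reverse inclusion follows since for $f\in \mathbb C(P)^K$, invariance of $f$ under $\sigma$ passes through the quotient to invariance of the descended function under $q_*\sigma$. Applying this to $\mathcal G = {\rm Gal}(\vec X) = {\rm Sym}(\mathbb C(P)^{\vec X})$ yields
\begin{equation*}
{\rm Inv}(q_*{\rm Gal}(\vec X)) \;=\; \mathbb C(P)^{\vec X}\cap \mathbb C(P)^K \;=\; \mathbb C(\bar P)^{q_*\vec X} \;=\; {\rm Inv}({\rm Gal}(q_*\vec X)).
\end{equation*}

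To conclude via Proposition \ref{pro:Galois_correspondence1}, I must verify that $q_*({\rm Gal}(\vec X))$ (interpreted with Zariski closure if needed) is a rational subgroupoid of gauge transformations of $\bar P$. Since $q_*$ is an algebraic morphism of groupoids over $M$ commuting with source and target, the image of a rational groupoid is constructible, dominates $M$ by both projections, and hence its Zariski closure satisfies conditions (a) and (b) of the definition on a suitable open subset of $M$. The main obstacle is precisely this verification: one must rule out pathological behaviour (cf.\ the examples in \S\ref{s:Dgroupoids}) and ensure that the image carries a differentiable groupoid structure over some Zariski open $U\subset M$. Once this is checked, the Galois correspondence gives $q_*({\rm Gal}(\vec X)) = {\rm Sym}(\mathbb C(\bar P)^{q_*\vec X}) = {\rm Gal}(q_*\vec X)$, which is the desired equality.
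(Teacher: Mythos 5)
Your strategy is sound and, unlike the paper, self-contained: the paper disposes of Theorem \ref{th:projection_GGal_theorem} by citing \cite{casale2009preuve} and remarking only that $q^*\bigl(\mathbb C(\overline P)^{q_*\vec X}\bigr)\subseteq \mathbb C(P)^{\vec X}$ --- an observation which, through the anti-isomorphism of lattices in Proposition \ref{pro:Galois_correspondence1}, yields only the easy inclusion $q_*({\rm Gal}(\vec X))\subseteq {\rm Gal}(q_*\vec X)$, the reverse inclusion being delegated entirely to the reference. You instead reduce the whole equality to a computation of invariant fields: the identification $q^*\colon\mathbb C(\overline P)\simeq\mathbb C(P)^K$, the equality $\mathbb C(\overline P)^{q_*\vec X}=\mathbb C(P)^{\vec X}\cap\mathbb C(P)^K$, and the exactness statement ${\rm Inv}(q_*\mathcal G)={\rm Inv}(\mathcal G)\cap\mathbb C(\overline P)$ are all correct and cleanly argued, and together with the bijectivity of ${\rm Inv}$ and ${\rm Sym}$ they do force $\overline{q_*({\rm Gal}(\vec X))}={\rm Gal}(q_*\vec X)$. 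This is a genuinely more informative argument than the one printed.

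The one point you must still nail down is the one you flag yourself: that the Zariski closure of $q_*({\rm Gal}(\vec X))$ is a rational groupoid of gauge transformations of $\overline P$, since Proposition \ref{pro:Galois_correspondence1} --- in particular the identity ${\rm Sym}\circ{\rm Inv}={\rm id}$ that your last step invokes --- is only available for such objects. Constructibility and dominance of $M$ by source and target do give condition (b), but condition (a), the existence of a Zariski open $U\subseteq M$ over which the restriction is a differentiable groupoid, does not follow from Chevalley's theorem alone: one must check that the image is, generically over $U\times U$, a locally closed submanifold on which source and target remain submersions. The natural route is to note that the fibers of $q_*$ restricted to ${\rm Gal}(\vec X)$ are cosets of the normal subgroupoid ${\rm Gal}(\vec X)\cap\ker(q_*)$, so that over a suitable $U$ the image is a quotient Lie groupoid. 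Without some such argument the final appeal to the Galois correspondence is not yet licensed; everything else in your write-up is complete.
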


\begin{proof}
It is a particular statement of a result in \cite{casale2009preuve}. However it suffices to note that $q^*(\mathbb C(\overline P)^{q_*\vec X}) \subseteq \mathbb C(P)^{\vec X}$.
\end{proof}

Now, as in subsection \S\ref{sec:specialization_theo}, let us consider $\rho\colon M\to S$ with $\dim M = m$ and $\dim M/S =r$. Let $G$ be the subgroup of $\Gamma$ of maps that leave $ \mathbb C^r\subset \mathbb C^m$ invariant, where we identify $\mathbb C^r$ inside $\mathbb C^m$ as $ \mathbb C^r = \{ \varepsilon_{r+1} =\cdots = \varepsilon_{m}=0 \}$. Explicitly we have 
$$
G= \Big\{ \phi : (\mathbb C^m,0)\to (\mathbb C^m,0) 
\,\colon \,
\frac{\partial \phi_i}{\partial \varepsilon_j} =0 \mbox{ for } i=1,\ldots, r; \quad j=r+1,\ldots, m\Big\}$$

Let us consider $\mathrm{R}^{\rho}_k M$ the set of $k$-jets of biholomorphisms $(\mathbb C^m,0)\to (M,p)$ that send the subspace $\mathbb C^r$ to the fiber $M_{\rho(p)}$. If a frame is in $\mathrm{R}^{\rho}_k M$ then it can be restricted to $(\mathbb C^r,0)$ obtaining a frame of the fiber. By taking projective limit we obtain  $ \mathrm{R}^\rho M\subset \mathrm{R}M$ as a $G$-sub-bundle, a reduction of structure group of $\mathrm{R}M$ from $\Gamma$ to $G$.
The gauge groupoid $ \mathrm{Iso}(\mathrm{R}^\rho M )$is identified with $\textrm{Aut}(M)^\rho = {\rm Sym}_\Delta(\rho^*\mathbb C(S))$ the $\mathcal D$-groupoid of formal maps respecting the projection $\rho$. As $d \rho(X)=0$ we have $X^{(\infty)}$ is tangent to $ \mathrm{R}^\rho M$ and $ \mathrm{Mal}(\vec X) \subset\textrm{Aut}(M)^\rho$. The group $G$ acts on $  \mathrm{R}^\rho M$.  There is a natural exact sequence,
$$ 
0 \to K \to G\to \Gamma' \to 0
$$
given by the restriction to $\mathbb C^r$. Here, $K$ is the subgroup of formal maps in $\mathbb C^m$ inducing the identity in $\mathbb C^r$. A frame in $\mathrm{R}^{\rho}M$ can be restricted to $\mathbb C^m$, and therefore we obtain a frame on a fiber of $\rho$. Thus, we have a projection,
$$
 \mathrm{R}^\rho M \longrightarrow 
 \mathrm{R}^\rho(M)/K \simeq
 \mathrm{R}(M/S), \quad
 \phi \mapsto \phi|_{ \mathbb C^m}.
$$
It is straightforward that $X^{(\infty)}$ is projectable and projects onto $(X/S)^{\infty}$. 
By Theorem \ref{th:projection_GGal_theorem} (applied to all finite orders $k$) we get a surjective map,
$$
\mathrm{Mal}(\vec X) \longrightarrow \mathrm{Mal}(\vec  X/S).
$$
 Let $\sigma\in {\rm Aut}(M)^{\rho}$ and let $s$ be the projection of the source of $\sigma$. The target is $\sigma$ is also in $M_s$ and moreover $\sigma$ restricts to a formal map in ${\rm Aut}(M_s)$. If we fix a fiber $M_s$, taking into account that ${\rm Mal}(\vec X)\subset{\rm Aut}(M)^{\rho}$ then it makes sense to consider the restriction ${\rm Mal}(\vec X)|_{M_s}$ as the restriction to $M_s$ of all elements of ${\rm Mal}(\vec X)$ with source in $M_s$. From the surjectiveness of the above map we obtain $\mathrm{Mal}(\vec X)|_{M_s} = \mathrm{Mal}(\vec X/S)|_{M_s}$. Now, by application of Theorem \ref{th:specialisation} to the right and side of the equality we get: 

\begin{corollary}
\label{co:restriction_Mal_fiber}
For all $s\in S$, $ \mathrm{Mal}(\vec X)|_{M_s} \supset \mathrm{Mal}(\vec X|_{M_s})$.
\end{corollary}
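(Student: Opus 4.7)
The plan is to deduce this by bridging $\mathrm{Mal}(\vec X|_{M_s})$ and $\mathrm{Mal}(\vec X)|_{M_s}$ through the intermediate partial Malgrange groupoid $\mathrm{Mal}(\vec X/S)|_{M_s}$, combining the projection theorem (Theorem \ref{th:projection_GGal_theorem}) on one side with the specialization theorem (Theorem \ref{th:specialisation}) on the other.

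First I would make precise the principal-bundle setup sketched before the statement: the bundle of $\rho$-adapted frames $\mathrm{R}^\rho M \to M$ is a $G$-principal bundle, and the short exact sequence $0 \to K \to G \to \Gamma' \to 0$, where $K$ is the subgroup of formal maps inducing the identity on $\mathbb C^r$, realizes $\mathrm{R}(M/S)$ as the $K$-quotient $\mathrm{R}^\rho M / K$ via restriction of frames to $\mathbb C^r$. Because $d\rho(\vec X)=0$, the prolongation $\vec X^{(\infty)}$ is tangent to $\mathrm{R}^\rho M$ and $q$-projectable onto $(\vec X/S)^{(\infty)}$; in particular $\mathrm{Mal}(\vec X) \subset \mathrm{Aut}(M)^\rho$, so every element of $\mathrm{Mal}(\vec X)$ has source and target in a single fiber of $\rho$ and admits a genuine restriction to that fiber.

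Second, applying Theorem \ref{th:projection_GGal_theorem} at every finite order $k$ to this $K$-quotient yields a surjective morphism of $\mathcal D$-groupoids $q_*\colon \mathrm{Mal}(\vec X) \twoheadrightarrow \mathrm{Mal}(\vec X/S)$. The construction of $\mathrm{R}(M/S)$ from $\mathrm{R}^\rho M$ by restriction to $\mathbb C^r$ forces, for $\sigma$ with source $p \in M_s$, the identification of $q_*(\sigma)$ with the restriction $\sigma|_{M_s}$. Taking restrictions to $M_s$ on both sides of the surjection and passing to the limit in $k$ therefore yields the equality $\mathrm{Mal}(\vec X)|_{M_s} = \mathrm{Mal}(\vec X/S)|_{M_s}$. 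Finally, Theorem \ref{th:specialisation} gives $\mathrm{Mal}(\vec X|_{M_s}) \subset \mathrm{Mal}(\vec X/S)|_{M_s}$, and chaining this inclusion with the preceding equality produces $\mathrm{Mal}(\vec X|_{M_s}) \subset \mathrm{Mal}(\vec X)|_{M_s}$.

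The only genuine obstacle is the bookkeeping in the previous step: one must check that the notion of ``restriction to $M_s$'' coming from the $K$-quotient $q_*$ coincides, at every finite order, with the notion used when one speaks of $\mathrm{Mal}(\vec X/S)|_{M_s}$ as a subset of $\mathrm{Aut}(M_s)$. Once this identification is in hand—which is automatic from the definitions of $\mathrm{R}^\rho M$ and $\mathrm{R}(M/S)$—the corollary is an immediate diagram chase, requiring no new estimate or structural input beyond the two theorems already invoked.
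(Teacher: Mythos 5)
Your proposal is correct and follows exactly the route the paper takes: the surjection $\mathrm{Mal}(\vec X)\twoheadrightarrow\mathrm{Mal}(\vec X/S)$ obtained from Theorem \ref{th:projection_GGal_theorem} applied at each finite order, the resulting identification $\mathrm{Mal}(\vec X)|_{M_s}=\mathrm{Mal}(\vec X/S)|_{M_s}$, and then Theorem \ref{th:specialisation} to conclude. The bookkeeping point you flag (compatibility of the two notions of restriction to $M_s$) is handled in the paper in the same way, by noting that $\mathrm{Mal}(\vec X)\subset\mathrm{Aut}(M)^{\rho}$ so that every element genuinely restricts to a formal map of the fiber.
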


\section{Malgrange-Galois groupoid of Painlev\'e VI equation}
\label{sec:PainleveVI}

\subsection{Hamiltonian form for Painlev\'e VI}

 Painlev\'e VI equation \index{Painlev\'e VI equation} is a second order differential equation for function $u$ of a complex variable $x$ of the form (see, for example, \cite[ p.119]{iwasaki2013gauss}, and \cite{okamoto1980polynomial})
\begin{align}
\label{eq:Pain_VI}
u'' &=F(x,u,v, a,b,c,e); \quad u'=v
\end{align}
where $F\in \mathbb C(x,u,v,a,b,c,e)$ is
\begin{eqnarray*}
\frac{1}{2} \left( \frac{1}{u} + \frac{1}{u-1} + \frac{1}{u-x} \right){v}^2 -\left( \frac{1}{x} + \frac{1}{x-1} + \frac{1}{u-x} \right){v}
\\
+ \frac{ u(u-1)(u-x)}{x^2(x-1)^2} \left( \frac{c^2}{2} - {\frac{a^2}{2}}\frac{x}{u^2} + {\frac{b^2}{2}}\frac{x-1}{(u-1)^2} + \frac{1-e^2}{2} \frac{x(x-1)}{(u-x)^2} \right).
\end{eqnarray*}
The variables $a$, $b$, $c$, $e$ are the parameters of Painlev\'e VI equation. However, our interest is to consider the role of the parameters in nonlinear differential Galois theory. Therefore, we see Painlev\'e VI equation as the following rational vector field in $\mathbb C^7$,
\begin{align}
\vec Y = \frac{\partial}{\partial x} + v \frac{\partial}{\partial u}  + F(x,u,v, a,b,c,e) \frac{\partial}{\partial v}. \tag{$\rm P_{VI}$}
\end{align}
The trajectories of this vector field, parameterized by $x$ are $(x,u(x), u'(x), a,b,c,e)$ for $u$ a solution of $P_{VI}$  with fixed parameters.
Equation Painlev\'e VI \eqref{eq:Pain_VI} admits the following equivalent Hamiltonian form (see \cite[ p.140]{iwasaki2013gauss})
with hamiltonian function
\begin{align*}
H &= \frac{1}{x(x-1)} \Big[ p(p-1)(p-x)q^2 - \Big( a(p-1)(p-x) + b p(p-x) +(e-1)p(p-1)\Big) q \\
&\quad \quad + \frac{1}{4}\big( (a+b+e-1)^2 -c^2\big) (p-x) \Big].
\end{align*}
and hamiltonian vector field
\begin{align}
\label{eq:PainVI_vec_field2}
\vec X = \frac{\partial}{\partial x} + \frac{\partial H}{\partial q} \frac{\partial}{\partial p}  -\frac{\partial H}{\partial p}\frac{\partial}{\partial q}.
\tag{$\rm HP_{VI}$}
\end{align}

The dominant finite map $\phi: \mathbb C^7 \to \mathbb C^7$
$$
(x,p,q,a,b,c,e) \mapsto \left(x,p, \frac{\partial H}{\partial q}, \frac{c^2}{2}, \frac{a^2}{2}, \frac{b^2}{2}, \frac{e^2}{2} \right)
$$
sends the vector field $\vec X$ onto the vector field $\vec Y$, giving the equivalence between systems \eqref{eq:Pain_VI} and \eqref{eq:PainVI_vec_field2}, thus a conjugation of their Malgrange-Galois groupoids. Our purpose is to compute the Malgrange-Galois groupoid of vector field $\vec X$, \eqref{eq:PainVI_vec_field2}.


\subsection{Some invariants of the Malgrange-Galois groupoid for Painlev\'e VI}
Along this section let us consider the following diagram of trivial bundles:
$$
\xymatrix{ 
M=\mathbb C^7 \ar[r]^-{\pi} \ar[dr]_-{\bar\rho} & \mathbb C^5_{x,a,b,c,e}=B \ar[d]^-{\rho} \\ 
&\mathbb C^4_{a,b,c,e}=S
}
$$
From Definition \ref{df:Mal_groupoid} the Malgrange-Galois groupoid of a vector field $\vec X$ is given by: 
$$
\mathrm{Mal} (\vec X) = \left\{ \sigma \in \mathrm{Aut}M \mid \mbox{for all } f\in \mathbb C(\mathrm{R}M)^{\vec X}\!,\;\; f \circ \sigma^{(\infty)} = f|_{({\rm R}M)_{s(\sigma)}}\right\}
$$ 

\begin{remark}
\label{rmk:frame_invariants}
From the known invariants of $X$ we can obtain some informations of its Malgrange-Galois groupoid. First, the conserved quantities $a,b,c,e$ are by themselves rational differential invariants of order $0$. Therefore, they span a $\Delta$-subfield\footnote{Let us recall that $\Delta = \{\delta_1,\ldots,\delta_7\}$ stands for the system of total derivative operators with respect to $\varepsilon$'s that give to $\mathbb C({\rm R}M)$ the structure of $\Delta$-field.} 
$\mathbb C\langle a,b,c,e \rangle_{\Delta}$ of the $\Delta$-field $\mathbb C(\mathrm RM)^{\vec X}$ of rational differential invariants of $\vec X$. The $\mathcal D$-groupoid corresponding to such field is the groupoid of formal maps respecting the projection $\bar\rho$. Therefore,
\begin{align*}
\mathrm{Mal}(\vec X) &\subset  \Big\{ \phi \in \mathrm{Aut}(\mathbb C^7) \; \mid
 \; \bar\rho\circ \phi = \bar\rho
\Big\}
\end{align*}
Also, from geometric invariants we can obtain information. There is an intrinsic connection between geometric structures and $\mathcal D$-groupoids. As it is shown (\cite{casale2004groupoide} Theorem 1.3.2 p. 10) any transitive $\mathcal D$-groupoid is the groupoid of invariance of a geometric structure. Let us see some examples. 
\begin{itemize}
    \item We know $\vec Xx =1$ and therefore $\mathrm{Lie}_X(dx) = 0$. Therefore, $1$-form $dx$ can be seen as a rational tensor invariant by $X$, so that
    \begin{align*}
    \mathrm{Mal}(\vec X) &\subset  \Big\{ \phi \in \mathrm{Aut}(\mathbb C^7) \;\mid \; \phi^*(dx) = dx \Big\}
    \end{align*}
    How can we obtain the $\Delta$-field of rational differential invariants associated to $dx$? 
    Note that, in any given frame $\varphi$, the pullback 
    $$\varphi^*(dx)(0) = x_{:\epsilon_1}d_0\varepsilon_1 + \ldots x_{:\epsilon_7}d_0\varepsilon_7$$
    is a co-vector in $\mathbb C^m$. Its coordinates $x_{:\epsilon_1}$, $\ldots$, $x_{:\epsilon_7}$ are the differential invariants defining the $\mathcal D$-groupoid ${\rm Sym}_\Delta(\mathbb C\langle x_{:\epsilon_1},\ldots,x_{:\epsilon_7}\rangle_{\Delta})$ of symmetries of $dx$. 
    
    \item 
    A rational vector field $\vec Y$ in $M$ can be seen as a particular case of a geometric structure. In order to associated to this geometric structure as a $\Delta$-field of rational differential invariants we only need to observe that each $1$-frame $j^1_0\varphi$ at $p\in M$ determines a basis $\{A_1(j_0^1\varphi),\ldots,A_7(j_0^1\varphi)\}$ of $T_pM$ where
    $A_i(j_0^1\varphi) = d_0\varphi\left(\frac{\partial}{\partial \varepsilon_i} \right)_0.$
    The coordinates $Y_i$ of $Y$ in such basis can be seen as rational functions on ${\rm R}_1M$,
    $$Y(p) = \sum_{i=1}^7 Y_i(j_0^1\varphi)A_1(j_0^1\varphi).$$
    The $\mathcal D$-groupoid of symmetries of $\vec Y$ is then ${\rm Sym}_\Delta(\mathbb C\langle Y_1,\ldots Y_7\rangle_\Delta)$. In particular we have $[\vec X, \vec X] = 0$, so that the Malgrange-Galois groupoid of $\vec X$ is included in the $\mathcal D$-groupoid of symmetries of $\vec X$:
    \begin{align*}
    \mathrm{Mal}(\vec X) &\subset  \Big\{ \phi \in \mathrm{Aut}(\mathbb C^7) \;\mid \;\phi_*(\vec X) = \vec X \Big\}.
    \end{align*}
    
    \item The vector field $\vec X$, when restricted to specific values of the parameters, is a non-autonomous Hamiltonian with respect to the form $dq\wedge dp$. It follows that $\mathrm{Lie}_{\vec X} (dq\wedge dp) = d i_X (dp\wedge dq) = d(\frac{\partial H}{\partial q}dq + \frac{\partial H}{\partial p}dp)$ and then  $$ {\rm Lie}_{\vec X}(dp\wedge dq)\equiv 0\ \mathrm{mod}\ dx, da,db,dc,de.$$ This means that the rank $2$ bundle $\mathrm{ker}(d(\pi\circ\rho))$ is endowed with an $\vec X$-invariant volume form, this can also seen as a geometric structure in $\mathbb C^7$, defined by the class of $dp\wedge pq$ modulo $\pi^*\Omega_{B}^1$, and yields the following restriction,
    \begin{align*}
    \mathrm{Mal}(\vec X) &\subset  \Big\{ \phi \in \mathrm{Aut}(\mathbb C^7) \;\mid \; \phi^* (dq\wedge dp) \equiv dq\wedge dp \mod \pi^*\Omega^1_B 
    \Big\}.
    \end{align*}
\end{itemize}

\end{remark}

Summarizing Remark \ref{rmk:frame_invariants}, we have the following restrictions of the Malgrange-Galois groupoid of $X$,
\begin{align}
\label{eq:Mal_subset}
\mathrm{Mal}(\vec X) &\subset  \Big\{ \phi \in \mathrm{Aut}(\mathbb C^7) \;\mid\;\phi_*\vec X=\vec X; \quad \bar\rho\circ\phi = \bar\rho;\nonumber
\\ 
& \quad \quad \quad \phi^* dq\wedge dp \equiv dq\wedge dp \mod  \pi^*\Omega^1_B; 
\quad \phi^* dx =dx
\Big\}
\end{align}

Malgrange-Galois groupoid of Painlev\'e VI equation, with fixed values of parameters, has been found by Cantat--Loray. In what follows it is necessary to distinguish the set \emph{Picard parameters},
the following subset of $\mathbb C^4$:
$$
\Big\{
(a,b,c,e)\in (\textstyle{\frac{1}{2}}+\mathbb Z^4)
\Big\} 
\cup 
\Big\{
(a,b,c,e)\in \mathbb Z^4 \mid a+b+c+e \mbox{ is even }
\Big\} 
$$

\begin{proposition}[ Theorem 6.1 in \cite{cantat2009dynamics}]
\label{pr:picar_param}
For parameters $( a,b,c,e) $ not in Picard parameter set, Malgrange-Galois groupoid is given by:
\begin{align*}
\mathrm{Mal}(\vec X|_{\mathbb C^3 \times \{ ( a,b,c,e)\} } )
&=
\Big\{
\phi : ( \mathbb C^3, *) \overset{\sim}{\longrightarrow} ( \mathbb C^3,\star) \mid \phi^* dx =dx;\,\, \phi_*(\vec X)= \vec X;
\\
& \quad \quad \quad \phi^* dp\wedge dq = dp \wedge dq \mod dx
\Big\}
\end{align*}

In this formula $a,b,c,e$ are fixed parameters. The asterisk and the star stand for arbitrary points in $\mathbb C^3$.
\end{proposition}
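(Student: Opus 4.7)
The inclusion $\subset$ is essentially implicit in the discussion preceding the statement: restricting the vector field to the fiber $\mathbb C^3\times\{(a,b,c,e)\}$ we still have $\mathcal L_{\vec X}(dx)=0$, $[\vec X,\vec X]=0$, and $\mathcal L_{\vec X}(dp\wedge dq)\equiv 0 \bmod dx$, so $dx$, $\vec X$, and the class of $dp\wedge dq$ modulo $dx$ produce rational differential invariants of the restricted flow. By Definition~\ref{df:Mal_groupoid} and the $\Delta$-Galois correspondence (Proposition~\ref{pro:Galois_correspondence2}), this is enough to force $\mathrm{Mal}(\vec X|_{\mathbb C^3\times\{(a,b,c,e)\}})$ to lie inside the $\mathcal D$-groupoid $\mathcal H$ of formal symmetries of the triple $(dx, \vec X, [dp\wedge dq])$.

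The substance of the statement is the opposite inclusion. The plan is to study $\mathcal H$ via its $\mathcal D$-Lie algebra $\mathrm{Lie}(\mathcal H)$ and to show that any $\mathcal D$-sub-Lie-algebra of $\mathrm{Lie}(\mathcal H)$ containing $\mathrm{Lie}(\mathrm{Mal}(\vec X|_{\mathbb C^3\times\{(a,b,c,e)\}}))$ is all of $\mathrm{Lie}(\mathcal H)$. The key observation is that, away from the singular locus of $\vec X$, one can straighten $\vec X$ to $\partial_x$ and realise $dp\wedge dq \bmod dx$ as the standard symplectic form in the two transverse coordinates; under this reduction $\mathrm{Lie}(\mathcal H)$ becomes, fibrewise over the $x$-axis, the formal Lie pseudoalgebra of symplectic formal vector fields on $(\mathbb C^2,0)$, which is one of the simple primitive infinite pseudoalgebras on Cartan's list. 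An algebraic version of the Kiso-Morimoto theorem (of the form proved in Appendix~\ref{s:kiso}) then tells us that any proper $\mathcal D$-sub-Lie-algebra of this simple transitive pseudoalgebra must be of finite type, i.e.\ cut out by an algebraic condition on jets of some finite bounded order.

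To exclude the finite-type alternative, one invokes the \emph{nonlinear monodromy} of Painlev\'e VI. Analytic continuation of solutions along loops in the $x$-line produces local biholomorphisms of the $(p,q)$-plane that preserve $dx$, $\vec X$ and $dp\wedge dq \bmod dx$; their jets therefore lie in $\mathrm{Mal}(\vec X|_{\mathbb C^3\times\{(a,b,c,e)\}})$. Through the Riemann-Hilbert correspondence this monodromy realises the mapping-class-group action on the character variety $\chi(a,b,c,e)$ of $SL_2$-representations of $\pi_1(\mathbb P^1\setminus\{0,1,x,\infty\})$, and the main dynamical result of Cantat-Loray says that this action is Zariski dense in the group of polynomial symplectic automorphisms of $\chi(a,b,c,e)$ precisely when $(a,b,c,e)$ avoids the Picard locus. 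Hence no nontrivial algebraic condition on finite-order jets can be satisfied by every element of $\mathrm{Mal}(\vec X|_{\mathbb C^3\times\{(a,b,c,e)\}})$, which rules out the finite-type option and forces equality with $\mathcal H$.

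The hardest step, and the one that genuinely uses \cite{cantat2009dynamics}, is this last density statement: it is serious hyperbolic dynamics on the cubic surfaces $\chi(a,b,c,e)$, and it is exactly the content that collapses on the Picard locus (where the nonlinear monodromy factors through an abelian variety and $\mathrm{Mal}$ really is smaller). Everything else -- the easy inclusion, the symplectic straightening, and the Kiso-Morimoto classification of intermediate $\mathcal D$-Lie subalgebras -- is essentially formal once the dynamical input is available.
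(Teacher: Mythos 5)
The first thing to note is that the paper does not prove Proposition \ref{pr:picar_param} at all: it is quoted verbatim as Theorem 6.1 of \cite{cantat2009dynamics} and used as an external input (it is precisely what lets the authors verify hypothesis (Hyp2) of Theorem \ref{th:kiso-mori} fibre by fibre in the parameterized problem). So your attempt is necessarily a reconstruction of the Cantat--Loray argument, and judged as such it has two genuine gaps. First, the appeal to the Kiso--Morimoto theorem of Appendix \ref{s:kiso} is circular in this setting: hypothesis (Hyp2) of Theorem \ref{th:kiso-mori} demands that the restriction of the $\mathcal D$-Lie algebra to each fibre already be the \emph{full} algebra of divergence-free formal vector fields, which is exactly the conclusion you are trying to establish on the single fibre $\mathbb C^3\times\{(a,b,c,e)\}$; that theorem says nothing about proper sub-$\mathcal D$-Lie-algebras of the symplectic pseudoalgebra on $(\mathbb C^2,0)$. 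The tool actually needed here is the classification of $\mathcal D$-groupoids (equivalently, of invariant geometric structures) on surfaces going back to Cartan and to Casale, which is what Cantat--Loray use.

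Second, the dichotomy ``any proper sub-$\mathcal D$-Lie-algebra of the symplectic pseudoalgebra is of finite type'' is false: formal symplectomorphisms of $(\mathbb C^2,0)$ preserving a fibration, e.g.\ $(p,q)\mapsto(p+f(q),q)$, form a proper infinite-dimensional sub-pseudogroup. The correct trichotomy has an imprimitive branch (invariant singular foliation, web, curve, or other low-order transverse structure), and excluding it is where the hard dynamics of \cite{cantat2009dynamics} actually enters: one must show that the mapping class group action on the character variety $\chi(a,b,c,e)$ admits no invariant curve, no invariant foliation and no invariant affine structure, which follows from the positive-entropy/hyperbolic behaviour of that action off the Picard locus. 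Your substitute --- Zariski density of the monodromy in the group of polynomial symplectic automorphisms of $\chi(a,b,c,e)$ --- does not close this gap: density in a group of \emph{global} automorphisms does not imply density in the jet groupoid of local symplectomorphisms, and in particular does not by itself forbid an invariant singular foliation. Your easy inclusion $\subset$ and the transport of the problem to the character variety via Riemann--Hilbert are fine.
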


As a direct consequence of Theorem \ref{th:specialisation} we have the following:

\begin{proposition}
\label{pr:inclusion_picard_param}
There is an inclusion $\mathrm{Mal} (\vec X\mid_{ \mathbb C^3 \times \{ (a,b,c,e)\}}) 
\subset 
\mathrm{Mal}(\vec X)\mid_{ \mathbb C^3 \times \{ (a,b,c,e)\}}$.
\end{proposition}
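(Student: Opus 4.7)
The plan is to recognize that this statement is essentially the content of Corollary \ref{co:restriction_Mal_fiber} applied to the specific specialization datum provided by the parameter projection $\bar\rho$, so the main work is setting up that datum and verifying that its hypotheses are satisfied in the Painlev\'e~VI situation.

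First I would set $M = \mathbb C^7$, $S = \mathbb C^4$, and take $\rho = \bar\rho \colon (x,p,q,a,b,c,e)\mapsto (a,b,c,e)$. This is a smooth surjective map between smooth affine varieties with smooth irreducible fibers $M_s \simeq \mathbb C^3 \times \{s\}$ of dimension $r=3$, so the abstract framework of \S\ref{sec:specialization_theo} applies. Next I would check that $\vec X$ is tangent to $\rho$: from the explicit Hamiltonian expression of $\vec X$ the components along $\partial/\partial a$, $\partial/\partial b$, $\partial/\partial c$, $\partial/\partial e$ vanish, so $d\rho(\vec X)=0$. Consequently $\vec X$ restricts to a rational vector field on each fiber, and under the identification $M_s = \mathbb C^3\times\{(a,b,c,e)\}$ this restriction agrees with the vector field $\vec X|_{\mathbb C^3\times\{(a,b,c,e)\}}$ appearing in the statement.

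With these hypotheses in place I would invoke Corollary \ref{co:restriction_Mal_fiber} verbatim: for every $s\in S$ one has $\mathrm{Mal}(\vec X|_{M_s})\subset \mathrm{Mal}(\vec X)|_{M_s}$, which, taking $s=(a,b,c,e)$, is the desired inclusion. I do not expect any genuine obstacle here; the proof is essentially bookkeeping, reducing the Painlev\'e~VI situation to the general setup of \S\ref{sec:specialization_theo}, after which the specialization theorem \ref{th:specialisation} combined with the projection theorem \ref{th:projection_GGal_theorem} (which was already used to identify $\mathrm{Mal}(\vec X)|_{M_s}$ with $\mathrm{Mal}(\vec X/S)|_{M_s}$) immediately delivers the conclusion.
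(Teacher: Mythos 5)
Your proposal is correct and is essentially the paper's argument: the paper presents Proposition \ref{pr:inclusion_picard_param} as an immediate consequence of the specialization machinery (Theorem \ref{th:specialisation} together with Corollary \ref{co:restriction_Mal_fiber}) applied to $\bar\rho\colon\mathbb C^7\to\mathbb C^4$, which is exactly the bookkeeping you carry out. If anything, your version is slightly more careful than the paper's one-line attribution, since you correctly invoke Corollary \ref{co:restriction_Mal_fiber} (which already packages the projection-theorem step identifying $\mathrm{Mal}(\vec X)|_{M_s}$ with $\mathrm{Mal}(\vec X/S)|_{M_s}$) rather than Theorem \ref{th:specialisation} alone.
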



\subsection{Transversal part of $\textrm{Mal}(\vec X)$}

Let us consider $\mathrm{Sym}(\mathbb C\langle x \rangle_\Delta)$, the $\mathcal D$-groupoid of formal diffeomorphisms $\phi : (\mathbb C^7,* ) \to (\mathbb C^7, \star) $ that leave invariant the $x$ coordinate, \emph{i.e.} such that $x\circ \phi = x$. We want to prove the equality in Equation  (\ref{eq:Mal_subset}). To do this we look at $\mathrm{Mal}(\vec X) \cap \mathrm{Sym}(\mathbb C\langle x \rangle_\Delta)$. We have that 
\begin{align}
\label{eq:Mal_cap_inv(x)}
\mathrm{Mal}(\vec X) \cap  \mathrm{Sym}(\mathbb C\langle x \rangle_\Delta) &\subset \Big\{ \phi: (\mathbb C^7,*) \to ( \mathbb C^7, \star) \mid \phi^*\vec X = \vec X; \quad \pi\circ \phi = \pi;\nonumber
\\
 & \quad \quad \quad
 \quad \quad \quad  
 \quad \quad \quad 
 \phi^* (dq\wedge dp) = dq\wedge dp \quad\mathrm{mod }\,\, \pi^*\Omega^1_{B}
\Big\}
\end{align}

The plan of the remaining proof is to obtain equality in Equation \eqref{eq:Mal_cap_inv(x)}. To see this we restrict our attention to the $\mathcal D$--Lie algebra 
$$\mathcal L = \mathrm{Lie }\big(\mathrm{Mal}(\vec X)\cap \mathrm{Sym}(\mathbb C\langle x \rangle_\Delta)\big).$$
This Lie algebra is seen to be of the kind described in Kiso--morimoto Theorem \ref{th:kiso-mori} so we apply it. By these means we get the desired equality but for $\mathrm{Lie}\big( \mathrm{Mal}(\vec X)\cap \mathrm{Sym}(\mathbb C\langle x \rangle_\Delta)\big)$ instead of $\mathrm{Mal}(\vec X)\cap \mathrm{Sym}_\Delta(\mathbb C\langle x \rangle_\Delta)$. The final steps will be to ``get rid'' of the terms $\mathrm{Lie}$ and $\mathrm{Sym}(\mathbb C\langle x \rangle_\Delta)$.

\begin{proposition}
The $\mathcal D$--Lie algebra  
$\mathcal L$
and $\pi\colon \mathbb C^7\to\mathbb C^5$ are under the hypothesis of Theorem \ref{th:kiso-mori}, namely: first, that  $\mathcal L$ is tangent to $\pi$ and  second, that
there exists a form $\omega\in \Omega^m_M$ such that for all $s\in S$, $\omega|_{M_s}$ is a non--identically zero $m$--form on $M_s$ satisfying 
$$ 
\label{condition on mathcal L in Malgrange chapter}
	\mathcal L|_{M_s}
	=\big\{ 
	v \mbox{ vector fields on } M_s \colon
	\mathrm{Lie}_v\, \omega|_{M_s} =0
	\big\}.$$
\end{proposition}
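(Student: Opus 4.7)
The plan is to exhibit $\omega = dp\wedge dq \in \Omega^2_M$ and verify both hypotheses of Theorem~\ref{th:kiso-mori} with this choice. The fibers of $\pi\colon \mathbb C^7 \to \mathbb C^5$ are the two-dimensional $(p,q)$-planes $M_s$, on each of which $\omega|_{M_s} = dp\wedge dq$ is a nonvanishing $2$-form, matching $m = \dim M_s = 2$. For the tangency, I would appeal directly to Remark~\ref{rmk:frame_invariants}: every $\phi \in \mathrm{Mal}(\vec X) \cap \mathrm{Sym}(\mathbb C\langle x\rangle_\Delta)$ fixes $x$ and fixes $a,b,c,e$, the latter because $a,b,c,e$ are rational first integrals of $\vec X$ and the former by the intersection with $\mathrm{Sym}(\mathbb C\langle x\rangle_\Delta)$. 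At the infinitesimal level, every $v \in \mathcal L$ therefore annihilates all five functions $x,a,b,c,e$, which is exactly tangency to $\pi$.

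Next I would establish the easy inclusion $\mathcal L|_{M_s} \subset \{v \text{ on } M_s : \mathrm{Lie}_v\,\omega|_{M_s} = 0\}$ by restricting the defining condition $\phi^*(dp\wedge dq) \equiv dp\wedge dq \pmod{\pi^*\Omega^1_B}$ to a fiber: the ideal $\pi^*\Omega^1_B$ vanishes on $M_s$, leaving precisely volume preservation of $v|_{M_s}$. The substantive step is the reverse inclusion. I would fix $s_0 = (a_0,b_0,c_0,e_0)\in S$ outside the Picard set, pick $x_0 \in \mathbb C$, and set $s=(x_0,s_0)\in B$. Given a formal divergence-free vector field $v_0$ on $M_s$, transport it along the flow of $\vec X$ inside the slice $\mathbb C^3\times\{s_0\}$ to obtain a formal extension $\tilde v$; this $\tilde v$ is tangent to the coordinate $x$, commutes with $\vec X$, and remains fiberwise volume preserving because the restriction of $\vec X$ to the slice itself preserves $dp\wedge dq \pmod{dx}$ (Liouville for the Hamiltonian form of P$_{\mathrm{VI}}$). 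By Cantat--Loray (Proposition~\ref{pr:picar_param}), $\tilde v$ then lies in $\mathrm{Lie}\big(\mathrm{Mal}(\vec X|_{\mathbb C^3\times\{s_0\}})\big)$, and Proposition~\ref{pr:inclusion_picard_param} places $\tilde v$ in $\mathcal L$ on the slice, so $v_0 = \tilde v|_{M_s}$ lies in $\mathcal L|_{M_s}$.

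Finally I would promote this inclusion from the Zariski-dense non-Picard locus to every $s\in B$: since $\mathcal L$ is a rational linear subbundle of $J(\mathrm{T}M/M)$ and the bundle $V$ of fiberwise divergence-free formal vector fields is of constant dimension, $V \subset \mathcal L|_{M_s}$ amounts to a closed algebraic condition on the parameters that holds on a Zariski-dense set and therefore on all of $B$. The hard part will be this last propagation: a naive fiberwise dimension-semicontinuity argument runs the wrong way, so the closedness must be extracted from the global $\mathcal D$-Lie algebra structure of $\mathcal L$, by translating the inclusion into genuinely closed polynomial conditions on $s$ cutting out $\mathcal L$ inside $J(\mathrm{T}M/M)$, rather than arguing fiber-by-fiber.
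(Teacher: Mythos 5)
Your proposal reaches the right conclusion by essentially the same route as the paper: the two pillars are Cantat--Loray (Proposition \ref{pr:picar_param}) for non-Picard parameters and the specialization inclusion $\mathrm{Mal}(\vec X|_{\mathbb C^3\times\{s_0\}})\subset \mathrm{Mal}(\vec X)|_{\mathbb C^3\times\{s_0\}}$ (Proposition \ref{pr:inclusion_picard_param}, Corollary \ref{co:restriction_Mal_fiber}), played against the a priori upper bound \eqref{eq:Mal_cap_inv(x)}. The difference is one of level: the paper works with groupoids, sandwiching $\mathrm{Mal}(\vec X)|_{\bar\rho^{-1}(a,b,c,e)}$ between the Cantat--Loray groupoid (from below) and the symmetries of $\vec X$, $dx$, $dp\wedge dq \bmod dx$ (from above) to force equality, and only then intersects with $\mathrm{Sym}(\mathbb C\langle x\rangle_\Delta)$ and passes to Lie algebras; you instead work infinitesimally from the start, extending a given fiberwise divergence-free formal field $v_0$ to $\tilde v$ on the slice by the flow of $\vec X$ and checking directly that $\tilde v$ kills $x$, commutes with $\vec X$ and preserves $dp\wedge dq \bmod dx$. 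Your construction is correct and arguably more explicit; its only soft spot (that the lift of $\tilde v$ from the slice to an element of $\mathcal L$ on $\mathbb C^7$ can be chosen to annihilate $x$ off the central slice, i.e.\ that restriction commutes with the intersection defining $\mathcal L$) is glossed over in exactly the same way in the paper's own proof, so I do not count it against you.

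Where you genuinely diverge is the last paragraph. The paper does not attempt to propagate the fiberwise description from the non-Picard locus to all of $S$, and it does not need to: hypothesis (Hyp2) of Theorem \ref{th:kiso-mori} only asks for the identity $\mathcal L|_{M_s}=\{v : \mathrm{Lie}_v\,\omega|_{M_s}=0\}$ for \emph{generic} $s$, and its proof uses this only through the rational $\mathcal D_{M/S}$-module $\overline N$, which is determined by the behaviour over any Zariski-dense set of fibers; the complement of the (countable) Picard set is Zariski dense, and that is all that is used. So the step you flag as "the hard part" --- and correctly observe cannot be handled by naive semicontinuity, since the fibers of a $\mathcal D$-Lie algebra over special parameters may jump --- is not required for the argument. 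The discrepancy is in the wording of the proposition itself ("for all $s\in S$" versus the theorem's "for generic $s$"); you should simply prove the statement for $s$ outside the Picard set and note that this suffices to invoke Theorem \ref{th:kiso-mori}, rather than leaving an open propagation step.
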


\begin{proof} 
First, $\mathcal L$ is tangent to the fibers of $\pi : \mathbb C^7 \to \mathbb C^5$. Second, it preserves the volume form $dq\wedge dp$ on the fibers. Let us compute the restriction of $\mathcal L$ to a fiber $M_s$ with $s = (x_0,a,b,c,e)$. Let us assume that $(a,b,c,e)$ is not in the Picard parameter set. Then we by Proposition n \ref{pr:picar_param}
we have that ${\rm Mal}(\vec X|_{\bar\rho^{-1}(a,b,c,e)})$ is $\mathcal D$-groupoid of symmetries of $\vec X|_{\bar\rho^{-1}(a,b,c,e)}$,
$dx$ and the class of $dq\wedge dp$ mod $dx$. By corollary  \ref{co:restriction_Mal_fiber} it coincides with
${\rm Mal}(\vec X)|_{\bar\rho^{-1}(a,b,c,e)}$. 

Taking the intersection with ${\rm Sym}_\Delta(\mathbb C\langle x \rangle_\Delta)$ we obtain that the restriction of
${\rm Mal}(\vec X)\cap {\rm Sym}_\Delta(\mathbb C\langle x \rangle_\Delta)$ to $\bar\rho^{-1}(a,b,c,e)$ 
is the the $\mathcal D$-groupoid of symmetries of $\vec X|_{\bar\rho^{-1}(a,b,c,e)}$, $x$, and the class of $dq\wedge dp$ mod $dx$. It follows that 
$\mathcal L|_{\bar\rho^{-1}(a,b,c,e)}$ is the 
$\mathcal D$-algebroid of infinitesimal symmetries of $\vec X|_{\bar\rho^{-1}(a,b,c,e)}$, $x$ and the class of $dq\wedge dp$ mod $dx$. When we fix the value of $x = x_0$ then we obtain that 
$\mathcal L|_{M_s}$ is the $\mathcal D$-Lie algebroid of infinitesimal symmetries of $dq\wedge dp$.
\end{proof}

Therefore, from
Kiso--Morimoto Theorem \ref{th:kiso-mori}, we have that:
\begin{enumerate}
	\item[(a)] There exists  a foliation $ \mathcal F$ on $ \mathbb C^5$,  
    \item[(b)] there exists $ \mathcal H$ on $ \mathbb C^7$ such that $ d\pi( \mathcal H) = \mathcal F$, ${\rm rank}(\mathcal H) = {\rm rank}(\mathcal F)$,
    \item[(c)]
    $\mathcal L$ is the $\mathcal D$-Lie algebra of vector fields tangent to the fibers of $\pi$, preserving the volume form $dp\wedge dq$, and preserving the $\mathcal F$-connection\footnote{Let us recall that if $M\to B$ is a bundle and $\mathcal F$ is a foliation in $B$, then a $\mathcal F$-connecion is a linear sub-bundle of ${\rm T}M$ of the same rank that $\mathcal F$ and that projects onto $\mathcal F$, see \cite{malgrange2010pseudogroupes}.}
    $\mathcal H$.
    \begin{align*}
\mathcal L & = \Big\{ v\in J({\rm T}M/M) \mid d\pi(v) =0; \;\; \mathrm{Lie}_v \omega = 0\mod \pi^* \Omega^1_S;
\\
& \qquad \qquad \forall \vec Y \text{ tangent to }\mathcal H \quad [v,\vec Y]\ \text{ is tangent to } {\mathcal H}
\Big\}
\end{align*}
\end{enumerate}

In particular, the vector field $\vec X$ is tangent to $\mathcal H$. Therefore $\pi_*(\vec X) = \frac{\partial}{\partial x}$ is tangent to $\mathcal F$. Thus, the foliation $\mathcal F$ is $\rho$-projectable on a foliation of $\mathbb C^4$ of the same codimension. Let us denote by $\overline{\mathcal F}$ this projection.


\subsection{Affine Weyl group}

Let us note that if $\phi$ is a birational automorphism of $M$ and $\vec X$ is a rational vector field on $M$, then the pullback by the extension, $\phi^{(\infty)}\colon \mathrm{R}M \to \mathrm{R}M$ sends rational differential invariants of $\phi_*(\vec X)$ to rational differential invariants of $\vec X$. If follows clearly that $\phi$ induces an isomorphism between $\mathrm{Mal}(\vec X)$ and $\mathrm{Mal}(\phi_*(\vec X))$. In particular, if $\phi$ is a \emph{discrete birational symmetry} \index{Discrete birational symmetry}  of $\vec X$, that is, a birational automorphism of $M$ such that $\phi_*(\vec X) = \vec X$, then $\phi$ leaves $\mathrm{Mal}(\vec X)$ invariant. That is,
$$
j_p\sigma \in \mathrm{Mal}(\vec X) \Longleftrightarrow j_{\phi(p)}(\phi\circ \sigma \circ \phi^{-1})\in \mathrm{Mal}(\vec X).
$$

\begin{lemma}
Foliation $\overline{\mathcal F}$ in $\mathbb C^4$ is regular.
\end{lemma}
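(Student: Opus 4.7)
The plan is to use the group of B\"acklund/Okamoto symmetries of Painlev\'e VI, i.e., the affine Weyl group $W$ of type $D_4^{(1)}$, whose elements are birational self-maps of $\mathbb C^7$ preserving $\vec X$ and inducing affine transformations on the parameter factor $S = \mathbb C^4_{a,b,c,e}$. By the general observation recalled at the start of this subsection, each $w \in W$ leaves $\mathrm{Mal}(\vec X)$ invariant and consequently preserves the $\mathcal D$--Lie algebra $\mathcal L$, the $\mathcal F$-connection $\mathcal H$ provided by Kiso--Morimoto, the projected foliation $\mathcal F$ on $B$, and finally its further projection $\overline{\mathcal F}$ on $S$.

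The first step is to identify inside $W$ a rank-four lattice of translations on $S$. Okamoto's list of B\"acklund transformations provides explicit symmetries acting on $(a,b,c,e)$ by integer shifts, and the subgroup they generate is a lattice isomorphic to $\mathbb Z^4$ sitting inside $\mathbb C^4$. This lattice is Zariski dense in $S$, so any Zariski-closed proper subvariety of $S$ that is invariant under all these translations must be empty.

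The second step is to apply this principle to the singular locus $\Sigma \subset S$ of $\overline{\mathcal F}$, defined as the Zariski-closed set where the rank of $\overline{\mathcal F}$ is strictly smaller than its generic value. By definition $\Sigma$ is a proper Zariski-closed subset of $S$. Since $W$ preserves $\overline{\mathcal F}$, it also preserves $\Sigma$; in particular $\Sigma$ is invariant under the translation lattice, hence empty by the previous paragraph. Therefore $\overline{\mathcal F}$ has constant rank, i.e., it is regular.

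The main obstacle I expect is the bookkeeping required to exhibit Okamoto's B\"acklund transformations as honest birational symmetries of $\vec X$ on the whole of $\mathbb C^7$ (as opposed to conjugacies between Painlev\'e vector fields with differing parameter values) and to verify that their induced action on $S$ contains a rank-four lattice of translations. Once this is in place, the Zariski-density argument finishes the proof with essentially no further work; the proof itself contains no delicate analytic estimate, only the combination of Kiso--Morimoto's canonicity with the discrete symmetry structure of Painlev\'e VI.
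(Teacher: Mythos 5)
Your argument is correct and is essentially the paper's own proof: both exploit the $\mathbb Z^4$ subgroup of translations inside the extended affine Weyl group $D_4^{(1)}$ of B\"acklund transformations, note that it preserves $\mathrm{Mal}(\vec X)$ and hence $\mathcal L$, $\mathcal H$ and $\overline{\mathcal F}$, and conclude that the singular locus, being a proper Zariski-closed subset of $\mathbb C^4$ invariant under a Zariski-dense lattice of translations, is empty. The only detail worth adding explicitly (the paper does) is that these translations also leave the function $x$ invariant, which is needed so that they preserve $\mathrm{Sym}(\mathbb C\langle x\rangle_\Delta)$ and therefore the intersection defining $\mathcal L$.
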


\begin{proof} 
Equation \eqref{eq:PainVI_vec_field2} admits a discrete group $\widetilde{W}$ of symmetries, known as Backl\"und transformations, isomorphic to the extended affine Weyl group \index{Affine Weyl group} $D_4^{(1)}$. All elements of this group are birational transformations of $\mathbb C^7$. In particular this group contains a subgroup $G_4$ isomorphic to $\mathbb Z^
4$ with the following characteristics:
\begin{itemize}
    \item[(a)] The action of $G_4$ in $\mathbb C^7$ is projectable by $\bar\rho$ to the action of translations with integer displacements in $\mathbb C^4$.
    \item[(b)] The function $x$ is invariant by the action. 
\end{itemize}
This  group of translations is listed in \cite[p.6]{noumi2002new}.\\

The action of $ G_4$ preserves $\mathrm{Mal}(\vec X)$. The function $x$ is an invariant for this action, then $ \mathrm{Sym}(\mathbb C\langle x \rangle_\Delta)$ is also preserved. It follows that $G_4$ leaves $ \mathcal L$ invariant, and its defining equations, in particular, for $\phi \in G_4$, $\phi_*( \mathcal H) = \mathcal H$.
Denote by $\bar \phi$ the projection of $\phi$ as a birational transformation of $ \mathbb C^4$ and observe $\bar \phi$ is a translation. Then $\bar \phi$ sends $\overline{ \mathcal F}$ into $\overline {\mathcal F}$, \emph{i.e.} $d \phi( \overline {\mathcal F})=\overline {\mathcal F}$.  As the set of singularities of $\overline {\mathcal F}$ is  a proper Zariski closed  of $\mathbb C^4$, invariant under translation by $\mathbb Z^4$,  it must be empty. 
\end{proof}




A result of Iwasaki, \cite[Theorem 1.3]{iwasaki2008finite}  implies the following:

\begin{lemma} 
The solutions to $P_{VI}$ with finite monodromy are algebraic.
\end{lemma}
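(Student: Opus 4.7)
The plan is essentially to invoke the cited result of Iwasaki directly, since the lemma is stated as an immediate consequence. First I would fix notation: for parameters $(a,b,c,e)$ outside a codimension-one set, the Painlev\'e VI flow lies transversally to the projection $x\colon \mathbb C^7\to \mathbb C$ and its restriction to $\bar\pi^{-1}(a,b,c,e)\cap\{x=x_0\}\cong \mathbb C^2$ defines, after trivialization over $\mathbb C\setminus\{0,1,\infty\}$, a nonlinear monodromy representation of $\pi_1(\mathbb C\setminus\{0,1,\infty\},x_0)$ by birational automorphisms of the initial condition space. A (multivalued) solution of $P_{VI}$ has \emph{finite monodromy} when its initial condition lies on a finite orbit of this representation.

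The content of \cite[Theorem 1.3]{iwasaki2008finite} is precisely that any such finite orbit corresponds to an algebraic solution. So the second step is merely to observe that a solution $u(x)$ whose nonlinear monodromy orbit is finite has only finitely many branches over $\mathbb C\setminus\{0,1,\infty\}$ and moreover, by the growth estimates available for Painlev\'e VI near the three fixed singular points, these branches extend to a finite cover of $\mathbb P^1$ on which $u$ becomes single valued and meromorphic. Equivalently, $(x,u(x),u'(x))$ sweeps out a finite-to-one cover of the $x$-line in the phase space, so the Zariski closure of this trajectory is an algebraic curve.

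The one step that is not a pure citation is the passage from ``finitely many branches with controlled growth'' to algebraicity; this is the content of Iwasaki's theorem and relies on the Riemann--Hilbert correspondence between Painlev\'e VI solutions and monodromy data of rank $2$ Fuchsian systems on $\mathbb P^1$ with four regular singular points. I would not reproduce that argument; it is the main obstacle in \cite{iwasaki2008finite} but is already established there. Thus the proof of the lemma reduces to checking the hypotheses of Iwasaki's theorem (finiteness of the nonlinear monodromy orbit) and citing the conclusion.
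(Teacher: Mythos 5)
Your proposal is correct and matches the paper's treatment: the paper gives no proof of this lemma at all, simply stating that it is implied by \cite[Theorem 1.3]{iwasaki2008finite}, which is exactly the citation you reduce the statement to. Your added explanation of what ``finite monodromy'' means (a finite orbit of the nonlinear monodromy action on the space of initial conditions) and how Iwasaki's finite-branching result yields algebraicity is a reasonable unpacking of the same argument, not a different route.
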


This allows us to see:

\begin{lemma}
$\overline{\mathcal F}$ is a foliation by points.
\end{lemma}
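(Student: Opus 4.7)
The plan is to argue by contradiction: assume $\overline{\mathcal F}$ has leaves of positive dimension, and derive a contradiction via the Iwasaki lemma together with the countability of parameters admitting algebraic Painlev\'e VI solutions. Since $\overline{\mathcal F}$ is regular, suppose all its leaves have common dimension $d\geq 1$.

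Next I would exploit that $\mathcal H$ is an $\mathcal F$-connection with $\operatorname{rank}(\mathcal H)=\operatorname{rank}(\mathcal F)$. Fix $s_0=(a_0,b_0,c_0,e_0)$ outside the Picard locus together with a nonzero tangent vector $v$ to the leaf of $\overline{\mathcal F}$ through $s_0$. Lift $v$ locally to a rational vector field $\vec Y$ on $\mathbb C^7$ tangent to $\mathcal H$ with non-vanishing $\bar\rho$-projection. Since $\vec X$ is also tangent to $\mathcal H$ and $\mathcal H$ is preserved by bracket with vector fields in $\mathcal L$, the flow $\phi_t$ of $\vec Y$ fixes the coordinate $x$, analytically conjugates $\vec X|_{\bar\rho^{-1}(s_0)}$ with $\vec X|_{\bar\rho^{-1}(s_t)}$, and the curve $t\mapsto s_t\in\mathbb C^4$ is nonconstant.

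Such an analytic conjugation of the two Painlev\'e VI foliations induces an isomorphism between their monodromy representations. In particular, if the monodromy is finite at $s_0$ then it is finite at every $s_t$, and by the preceding Iwasaki lemma the corresponding solutions are all algebraic. By the Lisovyy-Tykhyy classification, outside the Picard locus the parameter values admitting algebraic solutions are countable, being a finite union of $\widetilde W$-orbits with $\widetilde W\cong D_4^{(1)}$. Choosing $s_0$ to be a non-Picard parameter with algebraic solutions, for instance one of the Dubrovin-Mazzocco icosahedral parameters, forces the uncountable leaf of $\overline{\mathcal F}$ through $s_0$ to consist entirely of such special parameters, a contradiction.

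The main obstacle is the monodromy-preservation step. Although $\phi_t$ is produced only as a germ at a point, we need it to faithfully transport the global Riemann-Hilbert data; this requires extending the germ to an analytic conjugation on a fundamental domain avoiding the singular fibers $x\in\{0,1,\infty\}$, which follows from the rationality of $\vec Y$ together with the fact that $\phi_t$ conjugates the restricted flows. Once this is secured the rest of the argument is clean, since the countability of algebraic-solution parameters is a purely classical input independent of the Malgrange formalism.
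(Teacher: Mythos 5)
Your proof is correct and follows the same strategy as the paper: use the $\mathcal F$-connection $\mathcal H$ to deform a distinguished algebraic solution isomonodromically along a putative positive-dimensional leaf of $\overline{\mathcal F}$, invoke Iwasaki's theorem to keep the deformed solutions algebraic, and contradict the Lisovyy--Tykhyy classification. The one genuine difference is the final counting step. The paper picks the $72$-branch icosahedral solution (entry 45 of Lisovyy--Tykhyy, at $\theta=(1/12,1/12,1/12,11/12)$) and uses that the deformation preserves the number of branches together with the fact that no other algebraic solution has $72$ branches; this is a purely discrete obstruction. You instead use countability of the set of non-Picard parameters admitting an algebraic solution, which is also true but asks slightly more of the classification: one must rule out continuous families of parameters carrying algebraic Riccati solutions (i.e.\ reduce to the Schwarz list) in addition to knowing that the exceptional solutions form finitely many orbits under the countable Okamoto/B\"acklund group. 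The paper's branch-count version is more robust precisely because it is insensitive to any low-branch continuous families. As for the obstacle you flag --- globalizing the germ of conjugation so that it transports the Riemann--Hilbert data --- the paper sidesteps it in essentially the way you suggest: what is really used is that the nonlinear monodromy varies continuously along a leaf of $\mathcal H$, so the cardinality of a finite orbit (the branch number) is locally constant; no extension of the conjugation to a full fundamental domain is required, only analytic continuation of the solution along the leaf.
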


\begin{proof}
In \cite{lisovyy2014algebraic} a list of all possible algebraic solutions of Painlev\'e VI equation is presented. These appear at special values of the parameters. The solution numbered as 45 in \cite[p.52]{lisovyy2014algebraic} is algebraic with 72 branches, and happens at the parameter $\theta = (1/12,1/12,1/12,11/12)$. 
If the dimension of $ \overline{\mathcal F}$ were greater than $0$, we could find a path on a leaf of $ \overline{\mathcal F}$ along which we could extend the given solution to a solution at another parameter, with also $72$ branches.  By Iwasaki, this solution is algebraic. But we know the original solution is unique, all other algebraic solutions have less than $72$ branches. We conclude that the foliation cannot have dimension greater than 1 so it must be a foliation by points.
\end{proof}

Therefore $\mathcal F = \left\langle
\frac{\partial}{\partial x}
\right\rangle$ and $\mathcal H = \left\langle \vec X\right\rangle$. Note that, for a vector field $v$ tangent to $\bar\rho$, as $X$ is transversal to $\bar\rho$, it is equivalent to say that $\mathrm{Lie}_v\mathcal H \subset \mathcal H$ or $[v,X]=0$. This completes the proof of the following result.

\begin{proposition}
\label{prop:first_description_of_mathcal_L}
\begin{align*}
\mathcal L
&= \Big\{ \vec{v} \text{ vector field on } \mathbb C^7 \;\mid\;
\vec v\cdot x=\vec v\cdot a=\vec v \cdot b=\vec v\cdot c =\vec v\cdot e =0; 
\;\; [\vec v,X]=0; \\
& \qquad \qquad \mathrm{Lie}_{\vec v} (dp\wedge dq) =0 \mod \pi^\ast \Omega_B^1
\Big\}
\end{align*}
\end{proposition}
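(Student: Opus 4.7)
The statement is the consolidation of the chain of lemmas just established, so my plan is to unwind the description of $\mathcal L$ obtained from Kiso--Morimoto and substitute into it the explicit identifications $\mathcal F = \langle \partial/\partial x\rangle$ and $\mathcal H = \langle \vec X\rangle$. Concretely, I would recall that an element $v\in\mathcal L$ is a formal vector field on $M=\mathbb C^7$ satisfying (i) $d\pi(v)=0$, (ii) $\mathrm{Lie}_v(dp\wedge dq)\equiv 0\mod \pi^*\Omega_B^1$, and (iii) $[v,\vec Y]$ is tangent to $\mathcal H$ for every $\vec Y$ tangent to $\mathcal H$.

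Next, I would rewrite each of these in coordinates. Using $\pi(x,p,q,a,b,c,e)=(x,a,b,c,e)$, condition (i) is equivalent to the five scalar equations $v\cdot x=v\cdot a=v\cdot b=v\cdot c=v\cdot e=0$. Condition (ii) is already in the form stated in the proposition. Only the translation of (iii) requires a separate computation.

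The one non-automatic step is therefore: under (i), the $\mathcal H$-preservation condition $[v,\vec X]\in \langle \vec X\rangle$ is equivalent to $[v,\vec X]=0$. The reduction I would give is the following. Write $[v,\vec X]=f\vec X$ for some rational function $f$. Applying both sides to the coordinate $x$ and using $v\cdot x=0$ together with $\vec X\cdot x=1$, we get $[v,\vec X]\cdot x = v(\vec X x)-\vec X(v x)=v(1)-\vec X(0)=0$, while $(f\vec X)\cdot x=f$. Hence $f=0$ and $[v,\vec X]=0$. The reverse implication is immediate.

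I do not expect any genuine obstacle at this final stage, since the hard work has already been done in the preceding lemmas: the regularity of $\overline{\mathcal F}$ via the $\mathbb Z^4$-subgroup $G_4$ of the extended affine Weyl group $D_4^{(1)}$ acting by translations on the parameter space, and the reduction to a zero-dimensional foliation via Iwasaki's theorem applied to the $72$-branch algebraic solution of Lisovyy--Tykhyy. The present proposition is then simply a clean repackaging of those outputs together with the elementary bracket computation above.
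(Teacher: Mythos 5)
Your proposal is correct and follows essentially the same route as the paper: the proposition is obtained by substituting $\mathcal F=\langle\partial/\partial x\rangle$ and $\mathcal H=\langle\vec X\rangle$ (outputs of the Kiso--Morimoto application together with the regularity and zero-dimensionality lemmas for $\overline{\mathcal F}$) into the Kiso--Morimoto description of $\mathcal L$, and then observing that for $\vec v$ tangent to $\pi$ the condition $[\vec v,\vec X]\in\langle\vec X\rangle$ forces $[\vec v,\vec X]=0$. Your explicit computation $[\vec v,\vec X]\cdot x=\vec v(1)-\vec X(0)=0$ is a cleaner justification than the paper's one-line transversality remark, but it is the same argument.
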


By integration of the $\mathcal D$-Lie algebra $\mathcal L$ we obtain:

\begin{proposition}
\label{pr:description_of_Mal_cap_Inv}
\begin{align}
\label{eq:description_of_Mal_cap_Inv}
\mathrm{Mal}(\vec X) \cap \mathrm{Sym}(\mathbb C\langle x \rangle_\Delta) 
&= \Big\{ \phi \mid
\pi \circ \phi = \pi; \;\; \phi_*(\vec X)=\vec X;\nonumber
\\
& \qquad \qquad \phi^*(dp\wedge dq) \equiv dp\wedge dq \mod \pi^\ast \Omega_B^1
\Big\}
\end{align}
\end{proposition}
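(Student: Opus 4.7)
The inclusion $\subset$ in (\ref{eq:description_of_Mal_cap_Inv}) is already given by (\ref{eq:Mal_cap_inv(x)}), so the plan is to establish the reverse inclusion by comparing the two $\mathcal D$-groupoids through their $\mathcal D$-Lie algebras. Write $\mathcal H := \mathrm{Mal}(\vec X) \cap \mathrm{Sym}(\mathbb C\langle x\rangle_\Delta)$ for the left-hand side and $\mathcal G$ for the right-hand side of (\ref{eq:description_of_Mal_cap_Inv}).

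The first step is to identify $\mathrm{Lie}(\mathcal G)$. Since $\mathcal G$ is by construction the $\mathcal D$-groupoid of formal symmetries of the geometric structure on $\mathbb C^7$ given by the submersion $\pi$, the vector field $\vec X$, and the class of $dp\wedge dq$ modulo $\pi^*\Omega^1_B$, a direct infinitesimal computation (of the same nature as the example of symmetries of a rational $m$-form discussed earlier) shows that $\mathrm{Lie}(\mathcal G)$ consists of exactly those formal vector fields that are tangent to $\pi$, commute with $\vec X$, and preserve the class of $dp\wedge dq$ modulo $\pi^*\Omega^1_B$. By Proposition \ref{prop:first_description_of_mathcal_L}, this is exactly $\mathcal L$, so $\mathrm{Lie}(\mathcal G) = \mathcal L = \mathrm{Lie}(\mathcal H)$.

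Having reached equality of $\mathcal D$-Lie algebras, the remaining task is to get rid of the $\mathrm{Lie}$. Both $\mathcal G$ and $\mathcal H$ are transitive $\mathcal D$-groupoids: $\mathcal G$ by construction as a symmetry groupoid of a geometric structure, and $\mathcal H$ because it contains the (formal) flows of the vector fields in $\mathcal L$, which already act transitively on a Zariski dense open subset of each fibre of $\pi$. By the correspondence between transitive $\mathcal D$-groupoids and geometric structures \cite[Theorem 1.3.2]{casale2004groupoide}, under which a transitive $\mathcal D$-groupoid is recovered from its $\mathcal D$-Lie algebra of infinitesimal symmetries, equality $\mathrm{Lie}(\mathcal H) = \mathrm{Lie}(\mathcal G)$ propagates to $\mathcal H = \mathcal G$.

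The main obstacle I expect is precisely this final integration step: equality of $\mathcal D$-Lie algebras does not, in general, imply equality of $\mathcal D$-groupoids, so one must verify that both sides are genuinely transitive in order for Casale's theorem to apply and rule out any residual discrete discrepancy. Once transitivity is confirmed on both sides, the geometric structure attached to each $\mathcal D$-groupoid is the same, and the desired equality follows.
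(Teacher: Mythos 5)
Your overall strategy --- reduce to equality of $\mathcal D$-Lie algebras via Proposition \ref{prop:first_description_of_mathcal_L} and then integrate --- is the same as the paper's, and the identification $\mathrm{Lie}(\mathcal G)=\mathcal L=\mathrm{Lie}\big(\mathrm{Mal}(\vec X)\cap\mathrm{Sym}(\mathbb C\langle x\rangle_\Delta)\big)$ is fine. The gap is in the integration step. First, neither side of \eqref{eq:description_of_Mal_cap_Inv} is a transitive $\mathcal D$-groupoid: every element satisfies $\pi\circ\phi=\pi$, so the image of $(s,t)$ lies in $M\times_B M\subsetneq M\times M$; transitivity of the flows of $\mathcal L$ along the fibres of $\pi$ is not transitivity of the groupoid on $\mathbb C^7$, so \cite[Theorem 1.3.2]{casale2004groupoide} does not apply as stated. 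Second, and more seriously, even for transitive $\mathcal D$-groupoids equality of $\mathcal D$-Lie algebras does not imply equality of the groupoids: on $\mathbb C$ the groupoids $\{\phi'=1\}$ and $\{(\phi')^2=1\}$ are both transitive symmetry groupoids of geometric structures ($dz$ and $dz^{\otimes 2}$ respectively) and have the same Lie algebra of constant vector fields. So the observation that both sides are symmetry groupoids of geometric structures with the same infinitesimal symmetries cannot by itself rule out the discrete discrepancy you are worried about.

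What does rule it out --- and what the paper's proof uses --- is the combination of (i) the a priori inclusion $\mathrm{Mal}(\vec X)\cap\mathrm{Sym}(\mathbb C\langle x\rangle_\Delta)\subseteq\mathcal G$ from \eqref{eq:Mal_cap_inv(x)}, which you state at the outset but then do not exploit in the final step, with (ii) a connectedness statement for the larger groupoid: both groupoids are of order $1$, hence determined by their first-order truncations, and $\mathcal G_1$ has connected source--target fibres, so it is the \emph{minimal} Lie subgroupoid of $\mathrm{Aut}_1(\mathbb C^7)$ with Lie algebroid $\mathcal L_1$. A subgroupoid of a source--target connected Lie groupoid having the same Lie algebroid must be the whole groupoid; applied to the inclusion at order $1$ this gives equality there and hence everywhere. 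Your argument becomes correct if you replace the appeal to transitivity and to the geometric-structure correspondence by this minimality/connectedness argument.
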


\begin{proof}
The Lie algebra $ \mathcal L$ is determined by $ \mathcal L_1$, its first order part. Let us call $\mathcal G$ the right hand side of (\ref{eq:description_of_Mal_cap_Inv})$; \mathcal G$ is determined by $ \mathcal G_1$, its first order part.The Malgrange-Galois groupoid $ \mathrm{Mal}(\vec X)$ is determined by $ \mathrm{Mal}_1(\vec X)$ too.  It can be proved that $ \mathcal G_1$ is connected with respect to source and target. Then $ \mathcal G_1$ is the least Lie subgroupoid of $ \mathrm{Aut}_1( \mathbb C^7)$ such that $ \mathrm{Lie}( \mathcal G_1) = \mathcal L_1$. By (\ref{eq:Mal_cap_inv(x)}) we know that $ \mathrm{Mal}_1(X) \subset \mathcal G_1$. As  $ \mathrm{Lie}( \mathrm{Mal}(\vec X)) = \mathcal L_1$ then $ \mathrm{Mal}_1(X) = \mathcal G_1$. We conclude that $  \mathrm{Mal}(\vec X) = \mathcal G$.
\end{proof}

\begin{theorem}
\label{th:Final_Mal_groupoid_PVI}
The Malgrange-Galois groupoid of Painlev\'e VI equation is given by
\begin{align}
\label{eq:Mal_final_description}
\mathrm{Mal}(\vec X)  
&= \Big\{ \phi \mid
\bar\rho \circ \phi = \bar\rho; \;\; \phi^*(dx) = dx; \;\; \phi_*(\vec X)=\vec X;\nonumber
\\
& \qquad \qquad \phi^*(dp\wedge dq) \equiv dp\wedge dq \mod\pi^\ast \Omega_B^1
\Big\}
\end{align}
\end{theorem}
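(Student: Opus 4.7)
The plan is to exploit Proposition \ref{pr:description_of_Mal_cap_Inv}, which already computes $\mathrm{Mal}(\vec X)\cap \mathrm{Sym}(\mathbb C\langle x\rangle_\Delta)$, and to upgrade it to the full statement by translating along the flow of $\vec X$. The inclusion $\subset$ is already established in (\ref{eq:Mal_subset}) from the invariants listed in Remark \ref{rmk:frame_invariants}; the work is in the reverse inclusion. Let $\mathcal G$ denote the right-hand side of (\ref{eq:Mal_final_description}) and pick a formal jet $\phi \in \mathcal G$ with source $p$. Because $\phi^*(dx) = dx$, there exists a constant $c \in \mathbb C$ such that $\phi^*(x) = x + c$.

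I would introduce the formal flow $\Phi_t$ of $\vec X$ (which satisfies $\Phi_t^*(x) = x + t$) and set $\psi := \Phi_{-c}\circ\phi$. The reduction consists in verifying that $\psi$ satisfies the four hypotheses appearing on the right-hand side of (\ref{eq:description_of_Mal_cap_Inv}): $\psi^*(x) = x$ is immediate, $\pi\circ\psi = \pi$ follows from combining $\bar\rho\circ\phi = \bar\rho$ with the shift in $x$ by $-c$, and $\psi_*(\vec X) = \vec X$ because both $\phi$ and $\Phi_{-c}$ preserve $\vec X$. For the volume congruence, the key observation is that both $\phi^*$ and $\Phi_{t}^*$ preserve the ideal in $\Omega_M^\bullet$ generated by $\pi^*\Omega_B^1$: the former because $\phi$ sends each generator $dx,da,db,dc,de$ to itself, the latter because $\vec X$ is tangent to $\bar\rho$ and satisfies $\mathcal L_{\vec X}(dx)=0$. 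Coupled with the infinitesimal identity $\mathcal L_{\vec X}(dp\wedge dq) \in \pi^*\Omega_B^1\wedge \Omega_M^1$ already recorded in Remark \ref{rmk:frame_invariants} (which integrates to $\Phi_t^*(dp\wedge dq) \equiv dp\wedge dq \mod \pi^*\Omega_B^1$ for all $t$), this forces the required congruence for $\psi$.

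Proposition \ref{pr:description_of_Mal_cap_Inv} then yields $\psi\in \mathrm{Mal}(\vec X)\cap \mathrm{Sym}(\mathbb C\langle x\rangle_\Delta)$, and in particular $\psi\in \mathrm{Mal}(\vec X)$. Independently, the formal flow $\Phi_c$ lies in $\mathrm{Mal}(\vec X)$ by the direct description: its $\infty$-jet acts trivially on every element of $\mathbb C(\mathrm{R}M)^{\vec X}$, since such elements are by definition constants of the derivation $\vec X^{(\infty)}$. As $\mathrm{Mal}(\vec X)$ is a groupoid, the composition $\phi = \Phi_c\circ\psi$ also lies in $\mathrm{Mal}(\vec X)$, establishing the reverse inclusion and hence the theorem.

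The only nontrivial technical point is the passage from the infinitesimal condition $\mathcal L_{\vec X}(dp\wedge dq)\equiv 0 \mod \pi^*\Omega_B^1$ to the finite-time congruence $\Phi_c^*(dp\wedge dq)\equiv dp\wedge dq \mod \pi^*\Omega_B^1$; this requires the ideal $\pi^*\Omega_B^1\cdot \Omega_M^\bullet$ to be stable under $\mathcal L_{\vec X}$, which is immediate once one observes that $\vec X$ projects under $\pi$ to $\partial/\partial x$, a vector field on the base $B$.
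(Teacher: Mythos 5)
Your proposal is correct and follows essentially the same route as the paper: both establish the inclusion $\subset$ via \eqref{eq:Mal_subset} and prove the reverse inclusion by composing a given element of the right-hand side with the flow $\exp(-\varepsilon\vec X)$ (your $\Phi_{-c}$) to reduce to an element fixing $x$, invoking Proposition \ref{pr:description_of_Mal_cap_Inv}, and then composing back with the flow, which lies in $\mathrm{Mal}(\vec X)$ because it preserves the rational differential invariants of $\vec X$. Your treatment is in fact slightly more explicit than the paper's on two points it leaves implicit, namely the integration of the infinitesimal congruence $\mathcal L_{\vec X}(dp\wedge dq)\equiv 0 \bmod \pi^*\Omega_B^1$ to the finite-time statement for $\Phi_t$, and the justification that $\Phi_c\in\mathrm{Mal}(\vec X)$.
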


\begin{proof}
We already have pointed in (\ref{eq:Mal_subset}) that $ \mathrm{Mal}(\vec X)$ is contained in the right side set of Equation \eqref{eq:Mal_final_description}. Let us prove the remaining inclusion. Fix an integer $k$ large enough and let $\psi\colon (\mathbb C^7,z_0)\to
(\mathbb C^7,z_1)$ be a map that satisfy equations in the right hand side of \eqref{eq:Mal_final_description}. We are going to show that $j_{z_0}^k\psi$ is in $ \mathrm{Mal}_k(\vec X)$. Let $ \varepsilon = x(z_1) -x(z_0)$. Without loss of generality, assume that $ \exp( - \varepsilon \vec X)$ is defined in a neighborhood such that if $z_2 = \exp( - \varepsilon \vec X)(z_1)$ then all the points $z_0,z_1$ and $z_2$ are inside an open set where $ \mathrm{Mal}(\vec X)$ is effectively a groupoid. Let the map $\phi\colon (\mathbb C^7,z_0)\to (\mathbb C^7,z_2)$ be defined by $\phi = \mathrm{exp}(-\varepsilon \vec X)\circ \psi$. Observe that $\phi$ respects $\bar \rho$, $dx$, $dp\wedge dq|_{ \mathbb C^2_{p,q}}$, and the field $\vec X$. Moreover, since $\phi^*(dx) = dx$ we have $\phi^*(x) = x + \lambda$ with $\lambda$ a constant, indeed $x(\phi(z_1)) = x(z_1)$ so that $\lambda = 0$ and $\phi^*(x) = x$. It follows that $\phi$ preserves $\pi$. By Proposition \ref{pr:description_of_Mal_cap_Inv} $j_{z_0}^k \phi$ is in $ \mathrm{Mal}_k(X)$. Then $j_{z_0}^k \psi = j_{z_2}^k \exp ( \varepsilon X) \circ j_{z_0}^k\phi$ is in $ \mathrm{Mal}_k (X)$. 
\end{proof}


\begin{corollary}\label{co:par}
If $y(x,a,b,c,e)$ is a parameter dependent solution of the sixth Painlev\'e equation then its annihilator in $\mathcal O_{J(\mathbb C^5, \mathbb C)}$ is the $\partial$-ideal generated by the sixth Painlev\'e equation.  
\end{corollary}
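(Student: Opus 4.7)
The plan is to derive the corollary from Theorem \ref{th:Final_Mal_groupoid_PVI} by showing that any extra differential relation on $y$ would produce a rational first integral of $\vec X^{(\infty)}$ not accounted for by the four geometric invariants listed in the theorem, contradicting the $\Delta$-Galois correspondence (Proposition \ref{pro:Galois_correspondence2}).

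Concretely, via the dominant map of \S\ref{sec:PainleveVI} the solution $y(x,a,b,c,e)$ corresponds to a parameter-dependent solution $(p,q)(x,a,b,c,e)$ of the Hamiltonian field $\vec X$ on $M=\mathbb C^7$, i.e.\ a section $s:\mathbb C^5\to M$ of $\bar\rho$ whose restriction to each parameter-slice is an integral curve of $\vec X$. Prolonging $s$ by the natural $5$-frame on $\mathbb C^5$ (completed to a $7$-frame along $s$ by the $\partial_p,\partial_q$-directions) gives a map $\tilde s:\mathbb C^5\to \mathrm{R}M$, and pull-back by $\tilde s$ converts a rational function on $\mathrm{R}M$ into a differential polynomial in the jets of $y$, compatibly with the total derivatives $\Delta$ on one side and $\partial_x,\partial_a,\partial_b,\partial_c,\partial_e$ on the other. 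Under this dictionary the $\partial$-ideal $[P_{VI}]\subseteq \mathcal O_{J(\mathbb C^5,\mathbb C)}$ corresponds exactly to the $\Delta$-ideal of relations forced on any solution by the four geometric structures in the right-hand side of Theorem \ref{th:Final_Mal_groupoid_PVI}, and by Proposition \ref{pro:Galois_correspondence2} these cut out the field $\mathbb C(\mathrm{R}M)^{\vec X}$.

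Assume now for contradiction that the annihilator $I(y)$ strictly contains $[P_{VI}]$, and pick $R\in I(y)\setminus[P_{VI}]$. Using that $s$ is $\vec X$-horizontal and that $[P_{VI}]$ already contains every $\Delta$-consequence of $P_{VI}$, one obtains via the above dictionary a rational function $\hat R$ on $\mathrm{R}M$, nonzero modulo the $\Delta$-field of invariants described in the preceding paragraph, whose $\vec X^{(\infty)}$-derivative still lies in the pull-back ideal $\tilde s^{*-1}([P_{VI}])$; iterating, $\hat R$ produces a nonzero element of $\mathbb C(\mathrm{R}M)^{\vec X}$ not belonging to the $\Delta$-subfield generated by $a,b,c,e$ and by the frame coordinates of $dx$, $\vec X$ and $[dp\wedge dq]\bmod\pi^*\Omega_B^1$. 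By Proposition \ref{pro:Galois_correspondence2} this strictly enlarges the $\Delta$-field of invariants of $\mathrm{Mal}(\vec X)$, hence yields a proper $\mathcal D$-subgroupoid of $\mathrm{Mal}(\vec X)$ satisfying the conditions of Theorem \ref{th:Final_Mal_groupoid_PVI}, a contradiction. Thus $I(y)=[P_{VI}]$.

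The main obstacle is the dictionary in the second paragraph: one must make precise how a differential polynomial on the jets of $y$ transports to an $\vec X^{(\infty)}$-invariant rational function on $\mathrm{R}M$, and verify that "$R$ vanishes on the jets of $y$ but not on every formal solution of $P_{VI}$" is equivalent to the constructed invariant being genuinely new with respect to the four listed above. This amounts to unfolding $\tilde s$ along the commuting flows generated by $\vec X$ and by the parameter translations and using the compatibility between $\vec X^{(\infty)}$ and the base-direction total derivatives; once this is in place the contradiction with Theorem \ref{th:Final_Mal_groupoid_PVI} is immediate.
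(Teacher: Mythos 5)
Your overall strategy---reduce to Theorem \ref{th:Final_Mal_groupoid_PVI} via the $\Delta$-Galois correspondence---is not the paper's, and it has a genuine gap at its central step. You claim that a relation $R\in I(y)\setminus[P_{VI}]$ can be converted, ``by iterating,'' into a nonzero element of $\mathbb C(\mathrm{R}M)^{\vec X}$ outside the $\Delta$-field generated by the four listed invariants. This implication is false in general: what an extra relation on the particular solution gives you is a \emph{proper $\partial_x^{\mathrm{tot}}$-invariant subvariety} $W\subsetneq V$ of the solution variety (namely the Zariski closure of the prolonged solution), and an invariant subvariety of a vector field does not produce a rational first integral --- the derivative of $\hat R$ lying in the ideal generated by $\hat R$ and the equation is a much weaker condition than $\vec X^{(\infty)}\hat R=0$. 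Your dictionary also conflates two different spaces: the invariants of $\mathrm{Mal}(\vec X)$ live on the frame bundle $\mathrm{R}M$ of $\mathbb C^7$, while the annihilator of $y$ lives in $\mathcal O_{J(\mathbb C^5,\mathbb C)}$; pulling back along a single prolonged section $\tilde s$ lands you in functions on $\mathbb C^5$, not in differential polynomials, so the claimed correspondence between $[P_{VI}]$ and the field cut out by the four geometric structures is not established.

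The paper's proof uses the theorem in a different, and essential, way. Working in the Hamiltonian picture, it sets $W\subseteq V$ equal to the Zariski closure of the prolonged solution in $J(\mathbb C^5,\mathbb C^2)$ and proves two lemmas: (i) $\mathrm{Mal}(\vec X)$ preserves $W$, because the stabilizer of $W$ is a $\mathcal D$-groupoid containing the flow of $\vec X$ and $\mathrm{Mal}(\vec X)$ is the smallest such groupoid (Lemma \ref{lm:W}); and (ii) $\mathrm{Mal}(\vec X)$ acts transitively on $V$ fiberwise over the parameter space, which is proved by \emph{exhibiting} elements of the right-hand side of \eqref{eq:Mal_final_description} --- parameter-dependent translations in $(p,q)$ extended along trajectories of $\vec X$ --- carrying any germ of solution to any other with the same parameters (Lemma \ref{lm:WV}). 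Together these force $W=V$. The transitivity statement (ii) is precisely the input from Theorem \ref{th:Final_Mal_groupoid_PVI} that your argument never uses, and it is what replaces the invalid passage from ``invariant subvariety'' to ``new first integral.'' To repair your proof you would have to either supply that transitivity argument or find some other mechanism ruling out proper invariant subvarieties of $V$ dominating the parameter space; the Galois correspondence alone does not do this.
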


Using the Hamiltonian formulation, we want to prove that the Zariski closure of a parameters dependant solution $p(x,a,b,c,e) ; q(x,a,b,c,e)$ in $J(\mathbb C^5, \mathbb C^2)$ is $V$ : the subvariety defined by the differential ideal generated by $\partial_x p - \frac{\partial H}{\partial q}$ and $\partial_x q + \frac{\partial H}{\partial p}$.\\

\begin{proof}

To prove this, one studies the action of $\mathrm{Mal}(\vec X)$ on $\partial_x^{tot}$. This action is well defined because the elements of $\mathrm{Mal}(\vec X)$ are fiber preserving transformations of $\mathbb C^5 \times \mathbb C^2 \to \mathbb C^5$. Let $W\subset V$ be the Zariski closure of a parameter dependant solution.\\

The proof follows from the next two lemmas.

\begin{lemma}\label{lm:W}
$\mathrm{Mal}(\vec X)$ preserves $W$
\end{lemma}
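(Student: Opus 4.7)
My plan is to show $\phi(W)=W$ for every $\phi\in\mathrm{Mal}(\vec X)$ by translating the groupoid action on the jet bundle $J(\mathbb C^5,\mathbb C^2)$ into an algebraic transformation of the parameter-dependent solution $y$, and then using the $\mathcal D$-groupoid invariance to compare annihilator ideals.

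First I would make the induced action of $\phi$ on $J(\mathbb C^5,\mathbb C^2)$ explicit. By Theorem \ref{th:Final_Mal_groupoid_PVI}, every $\phi\in\mathrm{Mal}(\vec X)$ satisfies $\bar\rho\circ\phi=\bar\rho$ and $\phi^*(dx)=dx$, so it covers a translation $T_\lambda\colon x\mapsto x+\lambda$ on the base $\mathbb C^5_{x,a,b,c,e}$ (fixing the parameters). This makes $\phi$ a fiber-preserving transformation of $\mathbb C^5\times\mathbb C^2\to\mathbb C^5$ in the sense of covering an automorphism of the base, and it prolongs canonically to an algebraic action on $J(\mathbb C^5,\mathbb C^2)$ via the standard formulas for jets of composed maps.

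Second, I would translate this into a transformation of parameter-dependent solutions. Writing $\phi=(T_\lambda,\phi_p,\phi_q)$ in bundle coordinates, the image $\phi(\mathrm{graph}\,y)$ is locally the graph of
$$
\tilde y(\tilde x,a,b,c,e)=\bigl(\phi_p,\phi_q\bigr)\bigl(\tilde x-\lambda,a,b,c,e,y(\tilde x-\lambda,a,b,c,e)\bigr),
$$
and one has $\phi\bigl(j^\infty y(p)\bigr)=j^\infty\tilde y(\phi_{\mathrm{base}}(p))$. Because $\phi_*\vec X=\vec X$ and the parameters are preserved, $\tilde y$ is itself a parameter-dependent solution of Painlev\'e VI, and is obtained from $y$ by a rational operation in $y$, $\tilde x-\lambda$ and the parameters. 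Consequently $\phi(W)$ is the Zariski closure of the image of $j^\infty\tilde y$.

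The crucial third step is to establish that $y$ and $\tilde y$ have the same annihilator ideal in $\mathcal O_{J(\mathbb C^5,\mathbb C^2)}$, which immediately gives $W=\phi(W)$. For this I would invoke that $\mathrm{Mal}(\vec X)$ preserves by definition all rational differential invariants of $\vec X$, which vanish along the infinite jet of any solution; the explicit algebraic relation expressing $\tilde y$ in terms of $y$ then transports every algebraic PDE satisfied by $y$ to one satisfied by $\tilde y$, and the symmetric argument applied to $\phi^{-1}\in\mathrm{Mal}(\vec X)$ yields the reverse inclusion. The main obstacle is precisely this step: translating the $\mathcal D$-groupoid invariance of the differential invariants of $\vec X$ into equality of annihilator ideals. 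The subtlety is that two solutions of the same equation generically have different annihilators, so the argument must genuinely use the characterization of $\mathrm{Mal}(\vec X)$ in Theorem \ref{th:Final_Mal_groupoid_PVI}, rather than the weaker property $\phi_*\vec X=\vec X$ alone.
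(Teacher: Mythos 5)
There is a genuine gap at your third step, and you have in fact put your finger on it yourself without closing it. Steps one and two are fine: $\phi$ covers a translation of the base, prolongs to $J(\mathbb C^5,\mathbb C^2)$, and carries the graph of $j^\infty y$ to the graph of $j^\infty\tilde y$ for another parameter-dependent solution $\tilde y$. But this only identifies $\phi(W)$ with the Zariski closure of the graph of $j^\infty\tilde y$; it does not show that this closure equals $W$. The mechanism you invoke to bridge this --- that the rational differential invariants of $\vec X$ ``vanish along the infinite jet of any solution'' --- is not correct: those invariants live on $\mathbb C(\mathrm{R}M)$, the frame bundle of $M=\mathbb C^7$, and the defining property of $\mathrm{Mal}(\vec X)$ is that they are \emph{fixed} by $\sigma^{(\infty)}$, not that they vanish on solution jets in $J(\mathbb C^5,\mathbb C^2)$. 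Transporting every PDE satisfied by $y$ to one satisfied by $\tilde y$ merely restates step two; it gives no inclusion between the annihilators of $y$ and of $\tilde y$ as ideals of $\mathcal O_{J(\mathbb C^5,\mathbb C)}$. As you observe, distinct solutions generically have distinct annihilators, and nothing in the explicit description of Theorem \ref{th:Final_Mal_groupoid_PVI} (preservation of $\bar\rho$, $dx$, $\vec X$ and the relative volume) singles out the particular solution $y$; so this route cannot succeed as stated.

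The paper avoids the problem entirely by reversing the logic. The stabilizer of $W$ inside $\mathrm{Aut}(M)$ is itself a $\mathcal D$-groupoid, because $W$ is the zero set of a radical $\partial_x^{\mathrm{tot}}$-ideal. Since $V$ and $W$ are defined by $\partial_x^{\mathrm{tot}}$-ideals and the restriction of $\partial_x^{\mathrm{tot}}$ to $J_0(V)=\mathbb C^7$ is $\vec X$, the prolongation of $\vec X$ to $J(\mathbb C^5,\mathbb C^2)$ is tangent to $W$, so the flow of $\vec X$ lies in this stabilizer. By the minimality characterization of the Malgrange--Galois groupoid --- it is the smallest $\mathcal D$-groupoid containing the flow of $\vec X$, one of the equivalent definitions recalled via \cite{casaledavy2020} --- the stabilizer of $W$ must contain $\mathrm{Mal}(\vec X)$. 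No comparison of annihilators of two individual solutions is ever needed. If you want to salvage your approach, you would have to import exactly this minimality argument, at which point the explicit computations of your first two steps become unnecessary.
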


\begin{remark} The restriction of $\partial_x^{tot}$ on $J_0(V) = \mathbb C^7$ is the vector field $\vec X$. As $V$ or $W$ are define by $\partial_x^{tot}$-ideals, the extension of $\vec X$ on $J(\mathbb C^5, \mathbb C^2)$ is tangent to $V$ and to $W$. The stabilizer of $W$ is a $\mathcal D$-groupoid containing the flow of $\vec X$: it must contain $\mathrm{Mal}(\vec X)$.
\end{remark}

\begin{lemma}\label{lm:WV}
$\mathrm{Mal}(\vec X)$ acts transitively on $V$ over the parameter space.
\end{lemma}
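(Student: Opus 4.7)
The plan is to show that the orbits of $\mathrm{Mal}(\vec X)$ on $V$ coincide with the non-empty fibers of the induced projection $V \to \mathbb C^4$ by $\bar\rho$. One inclusion is immediate from Theorem~\ref{th:Final_Mal_groupoid_PVI}, since every $\phi \in \mathrm{Mal}(\vec X)$ satisfies $\bar\rho \circ \phi = \bar\rho$, hence every orbit is contained in a single fiber. For the reverse inclusion, I fix $s \in \mathbb C^4$ and pick $v_1, v_2 \in V_s := V \cap \bar\rho^{-1}(s)$; the goal is to produce $\phi \in \mathrm{Mal}(\vec X)$ sending $v_1$ to $v_2$ as jets.

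The construction proceeds in two steps. Let the base point of $v_i$ in $\mathbb C^7$ be $(x_i, s, p_i, q_i)$. First, since the flow $\exp(t\vec X)$ belongs to $\mathrm{Mal}(\vec X)$, I apply it with $t = x_2 - x_1$ to replace $v_1$ by a jet whose base point has $x$-coordinate $x_2$. After this reduction the two base points are $(x_0, s, p_1', q_1')$ and $(x_0, s, p_2, q_2)$ for a common $x_0$. Second, I choose a formal biholomorphism $\psi$ of the slice $\{x = x_0\} \cong \mathbb C^6_{a,b,c,e,p,q}$ that fixes the parameter coordinates, sends $(s, p_1', q_1')$ to $(s, p_2, q_2)$, and preserves the symplectic form $dp \wedge dq$ on each parameter-slice. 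Extending $\psi$ equivariantly by the flow of $\vec X$ produces a formal transformation $\widetilde\phi$ of $\mathbb C^7$ which, by construction, preserves $\vec X$, $dx$, $\bar\rho$, and the class of $dp \wedge dq$ modulo $\pi^*\Omega^1_B$; thus $\widetilde\phi \in \mathrm{Mal}(\vec X)$ by Theorem~\ref{th:Final_Mal_groupoid_PVI}.

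The essential point is that $\psi$ can be chosen so that $\widetilde\phi$ maps $v_1$ to $v_2$ as infinite jets in $V$. A jet in $V_s$ at a fixed base point is determined by the parameter-Taylor expansion of a parameter-dependent solution of $\rm HP_{VI}$, since the $x$-Taylor coefficients are recursively fixed by the Hamiltonian equations. The symplectic condition on $\psi$ constrains only the $(p,q)$-Jacobian determinant at each fixed parameter value, so the parameter-mixed Taylor coefficients of $\psi$ are entirely free. I would construct $\psi$ order by order, matching at each order the discrepancy between the parameter-Taylor expansions of the two solutions against the free coefficients of $\psi$. The main obstacle is in carrying out this induction rigorously, but at each order it reduces to a solvable affine system in the free jet data, since the symplectic condition contributes only one scalar equation per order on the $(p,q)$-block while the deformation to be matched lies in the transverse parameter directions.
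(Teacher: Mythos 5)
Your proposal is correct and follows essentially the same route as the paper: normalize the $x$-coordinates by the flow of $\vec X$, build a fibered transformation of the slice $\{x=x_0\}$ that matches the two jets while preserving $dp\wedge dq$ modulo the parameter differentials, extend it along trajectories of $\vec X$, and conclude via the characterization of $\mathrm{Mal}(\vec X)$ in Theorem \ref{th:Final_Mal_groupoid_PVI}. The one step you flag as the ``main obstacle'' --- the order-by-order construction of $\psi$ --- is precisely where the paper is simpler, and you can dispense with the induction entirely. As you note, a point of $V_s$ is determined by its base point together with the pure parameter-jet $(a,b,c,e)\mapsto\bigl(p_i(x_0,a,b,c,e),\,q_i(x_0,a,b,c,e)\bigr)$ of the underlying solution; the parameter-dependent translation
$$\psi\colon (p,q,a,b,c,e)\longmapsto \bigl(p+p_2(x_0,a,b,c,e)-p_1(x_0,a,b,c,e),\; q+q_2(x_0,a,b,c,e)-q_1(x_0,a,b,c,e),\;a,b,c,e\bigr)$$
matches these parameter-jets to all orders at once, and satisfies $\psi^*(dp\wedge dq)\equiv dp\wedge dq \mod da,db,dc,de$ tautologically. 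So your solvability claim at each order is true (one can always correct by terms depending only on the parameters, which do not touch the $(p,q)$-Jacobian), but it is better replaced by this explicit witness, which is what the paper does.
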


This means that fibers $\mathrm{Mal}(\vec X)_{a,b,c,e}$ act transitively on fibers $V_{a,b,c,e}$.

\begin{proof}
Fix parameters $(a_0,b_0,c_0,e_0)\in \mathbb C^4$.
Let $(p_1(x,a,b,c,e), q_1(x,a,b,c,e))$ be a germ of solution on $(\mathbb C, x_1) \times (\mathbb C^4, (a_0,b_0,c_0,e_0))$ and  $(p_2(x,a,b,c,e), q_2(x,a,b,c,e))$ be a germ of solution on $(\mathbb C, x_2) \times (\mathbb C^4, (a_0,b_0,c_0,e_0))$. One can find family of translations parameterized by $(a,b,c,e)$ : $\mathbb C^2 \times \mathbb C^4 \to \mathbb C^2 \times \mathbb C^4$ sending $(p_1(x_1,a,b,c,e), q_1(x_1,a,b,c,e))$ on $(p_2(x_2,a,b,c,e), q_2(x_2,a,b,c,e))$. Using trajectories of $\vec X$, one can extent this map to neighborhoods of fibers $\{x=x_1\}$ and $\{x=x_2\}$ above translation in the variable $x$ as a map preserving $\vec X$.
As Painlev\'e equation preserves $dp\wedge dq\wedge dx \ \mod da, db , dc, de$, this extension also. It satisfies all conditions and belongs to $\mathrm{Mal}(\vec X)_{a,b,c,e}$
\end{proof}

Finally, by Lemmas \ref{lm:W} and \ref{lm:WV} we have $V = W$ and thus the graph of the parameter dependent solution in $J(\mathbb C^5,\mathbb C^2)$ is Zariski dense in the variety defined by the radical $\partial$-ideal generated by the sixth Painlevé equation.
\end{proof}

\appendix

\section{Kiso--Morimoto theorem for $\mathcal D$-Lie algebras}\label{s:kiso}

A $\mathcal D$-Lie algebra is, outside of its singularities, a continuous Lie algebra sheaf of vector fields. A result from Kiso (Theorem 5.1 in \cite{kiso1979local} case II).  based on a Lemma that he attributes to Morimoto (Proposition 4.1 in \cite{kiso1979local}) allows to classify all continuous Lie algebra sheaves acting transitively on the fibers of a bundle and leaving an invariant volume form on the fibers  Here we present an algebraic version of Kiso-Morimoto theorem, the proof is the same as in the original paper \cite{kiso1979local}, but reasoning on the generic point. Kiso's original statement is only concerned with the canonical form of the $\mathcal D$-Lie algebra. However, in our statement we make explicit its realization as a $\mathcal D$-Lie algebra fixing some singular foliation transversal to the fibers. The hypothesis of Kiso-Morimoto apply to some $\mathcal D$-Lie algebra related to the Malgrange-Galois groupoid. \\

Let us set up the main elements relevant to the statement. Let us consider a regular submersion $\rho\colon M\to S$ between irreducible affine varieties. We also set, along this section $m = \dim M - \dim S$. For each $s\in S$ let us denote by $M_s$ its fiber by $\rho$. Also, let us denote by $\mathcal P = {\rm ker}(d\rho) \subset {\rm T}M$ the foliation whose leaves are the fibers of $\rho$ and by $\mathfrak X_{\mathcal P}$ the Lie algebra of rational vector fields tangent to $\mathcal P$. Note that $\mathcal P^\perp = \rho^*{\rm T}M \subset {\rm T}^*M$ is the bundle of $1$-forms vanishing on $\mathcal P$. Second, assume that $\mathcal L$ is a $\mathcal D$-Lie algebra such that for all jet of vector field $\vec X\in L$ at any $p\in M$ is $d\rho(X) = 0$. Then, for each $s\in S$ we may consider the restriction 
$$
\mathcal L|_{M_s} = \{ \vec X|_{M_{\rho(p)}} \,\colon\, \vec X \in \mathcal L \mbox{ based at } p\in M \mbox{ with } \rho(p)=s\},
$$ 
which is a $\mathcal D$-Lie algebra in $M_s$.\\

Let us also recall that a linear sub-bundle $\mathcal H\subset {\rm T}M$ is $\rho$-projectable if there is a sub-bundle $\mathcal F\subset {\rm TS}$ such that $d\rho(\mathcal H_p) = \mathcal F_{\rho(p)}$ for all $p\in M$. In such case $\mathcal F$ is completely determined by $\mathcal H$ and we write $\mathcal F = \rho_*(\mathcal H)$. This includes the case of foliations, that we see as linear sub-bundles of the tangent bundle.

\begin{theorem}[Kiso--Morimoto] \label{th:kiso-mori}
Let $\rho\colon M\to S$ be a regular submersion between irreducible affine varieties. Let $\mathcal L$ be a $\mathcal D$-Lie algebra in $M$ such that:
\begin{enumerate}
	\item[(Hyp1)] 
	$\mathcal L$ is tangent to $\rho$, meaning that for any formal vector field $\vec X\in \mathcal L$, $d\rho(\vec X) = 0$.
    \item[(Hyp2)]
    There exists a rational $m$-form $\omega\in \Omega^m_M$ such that for generic $s\in S$, $\omega|_{M_s}$ is not zero and satisfies\footnote{Note that if $\vec X\in J({\rm T}M/M)$ is a jet of vector field in $M$ and $\omega$ is an analytic form then ${\rm Lie}_{\vec X}\omega$ is well defined as a jet of form. The Lie derivative of a rational form is also well defined at the points outside its domain a a local meromorphic form by considering the rational form as linear combination of analytic forms with rational coefficients.}
    \begin{equation} \label{condition on mathcal L}
		\mathcal L|_{M_s}
		=\big\{ 
		\vec X \in J({\rm T}M_s/M_s) \mbox{ such that }
		{\rm Lie}_{\vec X}\, \omega|_{M_s} =0
		\big\}.
    \end{equation}
\end{enumerate}
Then there exists a singular foliation $\mathcal H$ over $M$ such that 
$$
\mathcal L  = \Big\{ \vec X\in J({\rm T}M/M) \mid d\rho(\vec X) =0, \,\, \mathrm{Lie}_{\vec X} \omega \equiv 0 \,\, \mathrm{mod} \,\, \rho^* \Omega^1_S,$$ 
\vspace{-6mm}
\begin{equation}\label{eq:km}
 \quad\quad \quad\forall \vec Y\in \mathfrak X_{\mathcal H} \,\, [\vec X,\vec Y]\in \mathfrak X_{\mathcal H}
\Big\}.
\end{equation}
moreover $\mathcal H$ is $\rho$-projectable and ${\rm rank}(\mathcal H) = {\rm rank}(\rho_*\mathcal H)$.
\end{theorem}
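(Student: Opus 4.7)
The plan is to recognize $\mathcal{L}$ as a subalgebra of a natural maximal $\mathcal{D}$-Lie algebra and then show that the only freedom allowed by the fiberwise rigidity in (Hyp2) is the stabilization of a transversal foliation. Set
$$
\mathcal{L}_0 = \Big\{\vec X \in J({\rm T}M/M) : d\rho(\vec X) = 0,\ {\rm Lie}_{\vec X}\omega \equiv 0 \mod \rho^*\Omega^1_S\Big\}.
$$
By (Hyp1) every $\vec X \in \mathcal{L}$ is $\rho$-vertical, and the condition ${\rm Lie}_{\vec X}\omega \equiv 0 \mod \rho^*\Omega^1_S$ is equivalent to the fiberwise condition ${\rm Lie}_{\vec X|_{M_s}}(\omega|_{M_s})=0$; by (Hyp2) this holds for all $\vec X\in\mathcal{L}$. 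Hence $\mathcal{L}\subseteq\mathcal{L}_0$, and by construction and (Hyp2) the restrictions satisfy $\mathcal{L}|_{M_s} = \mathcal{L}_0|_{M_s}$ for generic $s\in S$.

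The heart of the argument is Morimoto's observation that since $\mathcal{L}|_{M_s}$ is already the full divergence-free $\mathcal{D}$-Lie algebra of $M_s$ (a transitive, fiberwise ``maximal'' Lie pseudoalgebra), any further constraints distinguishing $\mathcal{L}$ from $\mathcal{L}_0$ cannot be of fiber-jet type, and must instead couple values on different fibers. I would make this explicit by studying the action of $\mathcal{L}$ on horizontal directions through the Lie bracket. Concretely, define
$$
\mathfrak{H} = \Big\{\vec Y \in \mathfrak{X}_M : [\mathcal{L},\vec Y] \subseteq \mathcal{L}\oplus \mathbb{C}(M)\cdot\vec Y\Big\},
$$
or more precisely the $\mathbb{C}(M)$-submodule of $\mathfrak{X}_M$ which is the normalizer of $\mathcal{L}$ modulo $\mathfrak{X}_{\mathcal{P}}$, and let $\mathcal{H}\subseteq {\rm T}M$ be the associated (generically regular) rational linear sub-bundle. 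Involutivity follows from the Jacobi identity applied to $\vec X\in\mathcal{L}$ and $\vec Y_1, \vec Y_2\in\mathfrak{X}_{\mathcal{H}}$, so $\mathcal{H}$ is a (singular) foliation on $M$.

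To verify the claimed properties, use that $\mathcal{L}|_{M_s}$ is transitive on $M_s$: any vertical direction in $\mathcal{H}_p$ would have to commute with a transitive divergence-preserving action along $M_s$, which forces it to vanish on a dense subset, hence $\mathcal{H}\cap\mathcal{P}=0$ generically. Thus $d\rho:\mathcal{H}\to \rho_*\mathcal{H}$ is injective on the generic fiber, giving both the $\rho$-projectability of $\mathcal{H}$ and the rank equality ${\rm rank}(\mathcal{H})={\rm rank}(\rho_*\mathcal{H})$. Let $\mathcal{L}'$ denote the right-hand side of \eqref{eq:km}. The inclusion $\mathcal{L}\subseteq\mathcal{L}'$ is built into the definition of $\mathcal{H}$. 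For the reverse inclusion, note that $\mathcal{L}'$ is a $\mathcal{D}$-Lie subalgebra of $\mathcal{L}_0$, and its fiber restrictions satisfy $\mathcal{L}'|_{M_s}\subseteq\mathcal{L}_0|_{M_s}=\mathcal{L}|_{M_s}$ (both sides are the full divergence-free algebra on $M_s$), so equality of fibers forces global equality via the Galois correspondence for $\mathcal{D}$-Lie algebras (Proposition \ref{pro:Galois_correspondence2} applied to the dual side).

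The principal obstacle is the second paragraph: making rigorous the construction of $\mathcal{H}$ as an involutive, $\rho$-projectable, rational distribution using only algebraic data. Kiso and Morimoto's original proof takes place in the analytic category with local convergent flows, so the algebraic version requires replacing convergence arguments with genericity and Noetherianity of coherent $\mathcal{D}$-modules on $M$. In particular, one must verify that the normalizer construction above gives a rational linear sub-bundle (openness of maximal rank) and that transitivity of the divergence-free algebra on each fiber persists at the generic point of $S$, so that the obstruction to lifting a horizontal vector field into $\mathcal{L}$ is captured by a bona fide geometric structure rather than a merely jet-theoretic one.
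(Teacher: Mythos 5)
There is a genuine gap, and it sits exactly where the real difficulty of the theorem lies: your closing claim that ``equality of fibers forces global equality via the Galois correspondence'' is false. Set $\mathcal L_0=\{\vec X: d\rho(\vec X)=0,\ \mathrm{Lie}_{\vec X}\omega\equiv 0\bmod \rho^*\Omega^1_S\}$ as in your first paragraph. Then $\mathcal L$, $\mathcal L'$ and $\mathcal L_0$ all have the \emph{same} restriction to every generic fiber $M_s$ (namely the full divergence-free algebra of $M_s$), yet $\mathcal L_0$ is in general strictly larger than $\mathcal L$ whenever $\mathcal H$ is nontrivial --- that is the entire content of the theorem. Fiber restriction only records jets of vector fields in the fiber directions and loses all information about derivatives transverse to $\rho$, so no Galois-type correspondence can recover $\mathcal L$ from its fiber restrictions; if your argument were valid it would equally ``prove'' $\mathcal L=\mathcal L_0$. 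The paper's proof of the inclusion $\overline{\mathcal L}\subseteq\mathcal L$ is precisely the hard part: it reduces the equality to an equality of the graded symbols $\mathfrak g^k$ and $\overline{\mathfrak g}^k$, introduces a second filtration by valuation in the transverse coordinates $s$ (the bigraded pieces $\mathfrak g^{k,\ell}$), establishes the base cases $\mathfrak g^{k,0}=\overline{\mathfrak g}^{k,0}$ and $\mathfrak g^{0,1}=\overline{\mathfrak g}^{0,1}=\mathfrak a$, and then propagates equality in $\ell$ by proving that the bracket $[\,\cdot\,,\cdot\,]\colon\mathfrak a\times\overline{\mathfrak g}^{k,\ell}\to\overline{\mathfrak g}^{k,\ell+1}$ is surjective (Lemmas \ref{lm:lemma_de_Guy} and \ref{BracketSurjectiveLemma}). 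None of this is circumvented by your argument.

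A secondary divergence: you define $\mathcal H$ as a normalizer of $\mathcal L$ in $\mathfrak X_M$, whereas the paper takes $\mathcal H=\ker(\mathfrak g^0)$, the kernel of the linear isotropy, and only afterwards proves (via Morimoto's Lemma, Schur's Lemma for the $\mathfrak{sl}(\mathcal P)$-action on $\mathfrak a=\mathfrak b\cap\mathfrak g^0$, and the semisimple splitting $\mathcal A=\mathcal P\oplus\mathcal H$) that $\mathcal H$ is a rational sub-bundle, involutive, $\rho$-projectable, and of the right rank. Your normalizer is not obviously a $\mathbb C(M)$-submodule ($\mathcal L$ is not closed under multiplication by functions, so $[\vec X,f\vec Y]\in\mathcal L\oplus\mathbb C(M)f\vec Y$ does not follow from the condition on $\vec Y$), and in any case the specific algebraic structure of $\mathfrak a$ coming from the linear-isotropy construction is what feeds the surjectivity-of-bracket lemma in the final step; a normalizer defined ad hoc gives you no handle on that computation. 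You correctly flag that making $\mathcal H$ rigorous is delicate, but the step you treat as routine --- the reverse inclusion --- is the one that actually fails as written.
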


\begin{remark}
By restriction to a suitable Zariski open subset $U\subset M$ we may:
\begin{itemize}
    \item[(a)] Assume that for all $k\in \mathbb N$, $\mathcal L_k\to M$ is a regular linear bundle. 
    \item[(b)] Assume that $\omega|_{M_s}$ is a regular volume form for all $s\in S$.
\end{itemize}
\end{remark}

The proof will take several steps. First, we shall find the foliation $\mathcal H$, that appears as the linear isotropy of $\mathcal L$. Then we will consider $\overline{\mathcal L}$ the $\mathcal D$-Lie algebra given in equation \eqref{eq:km}. Then we will check if $\overline{\mathcal L} = \mathcal L$. Before the exposition of the proof, we need some preliminary considerations.

\subsection{On the $\mathcal D_M$-module $N = {\rm ann}(\mathcal L)$} Let $\Omega^1_{M/S}$ be the $\mathbb C(M)$-space of rational $1$-forms restricted to the fibers of $\rho$, \emph{i.e.} rational sections of $\mathcal P^*$. We have a natural exact sequence.
$$
\xymatrix{ 
0 \ar[r] &\mathbb C(M)\tens_{\mathbb C(S)} \rho^*\Omega_S^1 \ar[r] & \Omega_M^1 \ar[r] & \Omega_{M/S}^1 \ar[r] & 0
}$$
 The $\mathcal D_M$-module $\mathcal D_M\tens_{\mathbb C(M)}\rho^*\Omega1^1_S$ is the $\mathcal D_M$-module of differential operators from $\mathfrak X_M$ onto $\mathbb C(M)$ vanishing on vector fields tangent to the fibers of $\rho$. By (Hyp1) in Theorem \ref{th:kiso-mori} we have that $\mathcal D_M\tens_{\mathbb C(M)}\rho^*\Omega^1_S\subseteq N$. Thus, by taking tensor product with $\mathcal D_M$ and restricting the above sequence to $N$ we obtain exact sequences of $\mathcal D_M$-modules:
$$
\xymatrix{ 
0 \ar[r] &\mathcal D_M \tens_{\mathbb C(M)} \rho^*\Omega_S^1 \ar[r] & 
{\rm Diff}(\mathfrak X_M,\mathbb C(M)) \ar[r]^{\mathrm{rst}} & 
{\rm Diff}(\mathfrak X_{\mathcal P},\mathbb C(M)) \ar[r] & 0 \\
0 \ar[r] &\mathcal D_M \tens_{\mathbb C(M)} \rho^*\Omega_S^1 \ar@{}[u]|-*[@]{=} \ar[r] & N \ar@{}[u]|-*[@]{\subset}
\ar[r]^{\mathrm{rst}} & {\rm rst}(N) \ar@{}[u]|-*[@]{\subset} \ar[r] & 0
}
$$ 
where ${\rm rst}$ represents the restriction of differential operators defined in $\mathfrak X_M$ to the subspace $\mathfrak X_{\mathcal P}$. As $N$ contains the kernel of ${\rm rst}$ it follows that $\mathrm{rst}^{-1}(\mathrm{rst}(N)) = N$. Let $\mathcal D_{M/S}\subset \mathcal D_M$ be the ring generated by $\mathbb C(M)$ and $\mathfrak X_{\mathcal P} $, consisting of rational differential operators that are tangent to the fibers of $\rho$. Then, we have:
$$\mathcal D_{M/S}\tens_{\mathbb C(M)} \Omega^1_{M/S} \subset {\rm Diff}(\mathfrak X_\mathcal P, \mathbb C(M)).$$
Now, let us define $\overline{N} = {\rm rst}(N) \cap (\mathcal D_{M/S}\tens_{\mathbb C(M)} \Omega^1_{M/S})$. This $\mathcal D_{M/S}$-module $\overline{N}$ is the system of linear partial differential equation satisfied by the restriction of $\mathcal L$ to a generic fiber $M_s$. \\

Let us consider now the invariant $m$-form $\omega$. Let $\bar\omega = \omega|_{\mathcal P}$ which is a regular section of $\Omega^m_{M/S}$. For each $\vec X\in \mathfrak X_{\mathcal P}$ we define the value of the divergence operator 
$${\rm div}_{\bar \omega}(\vec X) = \frac{{\rm Lie}_{\vec X}\bar\omega}{\bar\omega} \in \mathbb C(M).$$
The divergence operator is a first order differential operator 
${\rm div}_{\bar\omega}\in \mathcal D_{M/S}\tens_{\mathbb C(M)}\Omega_{M/S}^1$. We can reformulate (Hyp2) in Theorem \ref{th:kiso-mori} as $\overline N = {\mathcal D}_{M/S}{\rm div}_{\bar\omega}$, it follows: 
$${\rm rst}(N) \supseteq \mathcal D_M\overline{N} = \mathcal D_M{\rm div}_{\bar\omega}.$$

\subsection{Symbol and linear isotropy of $\mathcal L$} Here we will see how to recover the foliation $\mathcal H$ from the linear isotropy of $\mathcal L$.

\begin{definition}\label{df:gk}
For $p\in M$ let $ \mathcal L^{\geq k}_p= \mathcal L_p \cap \mathfrak m_p^k J(TM/M)_p$ be the space of formal vector fields in $\mathcal L_p$ that vanish up to order $k$ at $p$\footnote{Here $\mathfrak m_p$ is the maximal ideal of the local ring of formal developments of functions at $p$}. Then,
$$\mathcal L^{\geq k} = \bigcup_{p\in M} \mathcal L_p^{\geq k}$$
is a linear sub-bundle of $\mathcal L$. Let us define, 
$$\mathfrak g^k = \mathcal L^{\geq k+1}/ \mathcal L^{\geq k+2} \quad \mathfrak g = \bigoplus_{k=0}^\infty \mathfrak g^k;$$ 
each $\mathfrak g^k$ is a finite rank regular linear bundle over $M$, so-called \emph{$k$-th symbol} of $\mathcal L$, and $\mathfrak g$ is a graded linear bundle, so-called the \emph{symbol} of $\mathcal L$. As $\mathfrak g^0$ consists of linear parts of vector fields around fixed points, it is also termed the \emph{linear isotropy of} $\mathcal L$.
\end{definition}

By definition and Kuranishi's fundamental identification $\mathfrak m_p^{k+1}/\mathfrak m_p^{k+2} \simeq {\rm S}^{k+1}{\rm T}_p^*M$ we have that $\mathfrak g^k$ is 
a linear sub-bundle  of $ {\rm S}^{k+1} {\rm T}^*M \otimes \mathcal P$ described by the principal symbol of the differential operators of order $(k+1)$ of $N$. For $k = 0$ we have: 
$$\mathfrak g^0\subset {\rm T}^*M \otimes \mathcal P = {\rm Lin}({\rm T}M,\mathcal P) \subset \mathrm{End}({\rm T}M).$$
Note that the Lie bracket of linear vector fields coincides with the bracket of endomorphisms. From being $\mathcal L$ a Lie-algebra bundle, it follows that $\mathfrak g^0$ is a Lie algebra sub-bundle of ${\rm End}({\rm T}M)$. It consists of the linear parts of formal vector fields of $\mathcal L$ around a fixed point. Therefore $\mathfrak g^0$ is termed \emph{linear isotropy} of $\mathcal L$.

\begin{lemma}
Let us assume that there is a foliation $\mathcal H$ in $M$ such that equation \eqref{eq:km} holds. Necessarily $\mathcal H \subseteq \ker \mathfrak g^0$.
\end{lemma}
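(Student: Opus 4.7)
The plan is to read \eqref{eq:km} as an infinitesimal preservation statement for $\mathcal H$ along zeros of vector fields in $\mathcal L$, and combine it with hypothesis (Hyp1) of Theorem \ref{th:kiso-mori} to force the linearization to kill $\mathcal H$.

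Concretely, I would fix $p\in M$ and $A\in \mathfrak g^0_p$. By Definition \ref{df:gk}, $A$ is represented by the linear part at $p$ of a jet of vector field $\vec X\in \mathcal L$ with $\vec X(p)=0$. Because $\mathcal L$ is tangent to the fibers of $\rho$ (Hyp1), the image of $A$ lies in $\mathcal P_p$, so $A\in \mathrm{Lin}(T_pM,\mathcal P_p)$. Choose any local vector field $\vec Y$ tangent to $\mathcal H$ realizing a prescribed value $Y_p\in \mathcal H_p$ at $p$. By \eqref{eq:km}, $[\vec X,\vec Y]$ is again tangent to $\mathcal H$, and therefore $[\vec X,\vec Y]_p\in \mathcal H_p$. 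The standard coordinate expression of the bracket at a zero of $\vec X$ yields $[\vec X,\vec Y]_p=-A(Y_p)$, so $A(Y_p)\in \mathcal H_p$. Combined with $A(Y_p)\in \mathcal P_p$, this gives $A(Y_p)\in \mathcal H_p\cap \mathcal P_p$.

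The crux of the argument — and the main obstacle — is the transversality $\mathcal H\cap \mathcal P=0$, which is the content of the rank equality $\mathrm{rank}(\mathcal H)=\mathrm{rank}(\rho_*\mathcal H)$ stated as part of Theorem \ref{th:kiso-mori}. Once this is available at the generic point, $A(Y_p)=0$; letting $p$, $A\in \mathfrak g^0_p$ and $Y_p\in \mathcal H_p$ vary gives $\mathcal H\subseteq \ker \mathfrak g^0$. If the lemma is invoked in the proof of Theorem \ref{th:kiso-mori} before the rank equality has been established, I would either work on the Zariski open subset of $M$ where $d\rho|_{\mathcal H}$ is injective, or pass to $TM/\mathcal P$ and read the inclusion in the quotient, where the preservation argument carries through without any appeal to transversality.
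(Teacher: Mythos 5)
Your proof is essentially the paper's: the same computation of $[\vec X,\vec Y]$ at a zero of $\vec X$, using (Hyp1) to place the linear part in $\mathrm{Lin}({\rm T}_pM,\mathcal P_p)$ and \eqref{eq:km} to place the bracket value in $\mathcal H_p$. You are in fact more careful than the paper, which simply writes $0=[\vec Y,v]$ and thereby silently uses the transversality $\mathcal H_p\cap\mathcal P_p=0$ that you correctly isolate as the crux; just beware that your ${\rm T}M/\mathcal P$ fallback is vacuous (since $A(Y_p)$ already lies in $\mathcal P_p$, its class in the quotient is automatically zero), so the transversality really must be supplied, e.g.\ from the rank equality in the statement of Theorem \ref{th:kiso-mori} or by restricting to the locus where $d\rho|_{\mathcal H}$ is injective.
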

\begin{proof} 
Let us consider $p\in M$, $\vec Y\in \mathfrak X_\mathcal P$, and $v\in \mathcal L$ that vanishes at $p$ with linear part $v^1\in {\rm T}^*_pM\otimes \mathcal P_p$ . In a system of local coordinates $\{x_i\}$  vanishing at $p$ we have expressions:
$$\vec Y = Y_i \frac{\partial}{\partial x_i} + o(x), \quad v = \sum_{i,j} v_j^i x_i \frac{\partial}{\partial x_j} + o(x^2)$$
and then,
\begin{align*}
0 = [ \vec Y,v] = \vec Y\circ v - v\circ \vec Y = Y_iv_j^i \frac{\partial}{\partial x_j} + o(x).
\end{align*}
By evaluating at $p$ we get $0 = \sum_i v_j^i Y_i$ and this implies $\vec Y_p \in \ker v^1$.
\end{proof}

\subsection{Structure of the linear isotropy}  Let us consider the restriction map,
$${\rm rst|_{\mathcal P}}\colon {\rm Lin}({\rm T}M, \mathcal P) = {\rm T}^*M \otimes \mathcal P \to \mathcal P^*\otimes \mathcal P = {\rm End}(\mathcal P),$$
note that this restriction is also a Lie algebra bundle morphism (with the commutator of endomorphisms, or equivalently with the Lie bracket of linear parts of vector fields). The kernel of the restriction ${\rm rst|_{\mathcal P}}$ is the bundle ${\mathfrak b = \mathcal P^\perp \otimes \mathcal P}$ of linear maps from ${\rm TM}$ to $\mathcal P$ vanishing along $\mathcal P$. We have an exact sequence,
$$\xymatrix{
0 \ar[r] & \mathfrak b \ar[r] & {\rm Lin}({\rm T}M,\mathcal P) \ar[r] & {\rm End}(\mathcal P) \ar[r] & 0
}$$
Note that for any elements $b_1,b_2\in \mathfrak b$ we have $b_1\circ b_2 = 0$ (as endomorphisms of ${\rm T}_pM$ for some $p\in M$) and therefore  $\mathfrak b$ is an \emph{abelian} Lie algebra bundle. Since $\mathfrak b$ consists on linear maps vanishing along $\mathcal P$ and $\mathcal P^{\perp} = \rho^*{\rm T}^*S \subset {\rm T}^*M$ we obtain $\mathfrak b = \rho^*{\rm T}^*S \otimes \mathcal P = {\rm Lin}(\rho^*{\rm T}^*S,\mathcal P)$.

\begin{lemma}
The image $\mathfrak g^0$ in ${\rm End}(\mathcal P)$ is the Lie algebra bundle of trace free endomorphisms of $\mathcal P$. That is:
$$\mathfrak g^0|_{\mathcal P} = \mathfrak{sl}(\mathcal P).$$
\end{lemma}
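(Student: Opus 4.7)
The plan is to establish the two inclusions $\mathfrak g^0|_{\mathcal P} \subseteq \mathfrak{sl}(\mathcal P)$ and $\mathfrak g^0|_{\mathcal P} \supseteq \mathfrak{sl}(\mathcal P)$ separately, treating the fibers pointwise. The first inclusion is essentially immediate from (Hyp2). If $\vec X\in \mathcal L$ is a formal vector field based at $p$, vanishing at $p$, with linear part $A \in \mathrm{T}^*_pM \otimes \mathcal P_p$, then by (Hyp1) its restriction to the fiber $M_{\rho(p)}$ is a formal vector field on $M_{\rho(p)}$, and by (Hyp2) it preserves $\omega|_{M_{\rho(p)}}$. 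A direct computation of the Lie derivative of a volume form by a vector field vanishing at a fixed point gives $(\mathrm{Lie}_{\vec X}\omega|_{M_s})(p) = \mathrm{tr}(A|_{\mathcal P_p})\cdot \omega|_{M_s}(p)$, so the assumption $\omega|_{M_s}(p)\neq 0$ forces $\mathrm{tr}(A|_{\mathcal P_p}) = 0$. Hence the image of $\mathfrak g^0_p$ in $\mathrm{End}(\mathcal P_p)$ lies in $\mathfrak{sl}(\mathcal P_p)$.

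For the reverse inclusion, fix $p\in M$, set $s=\rho(p)$, and take $A\in \mathfrak{sl}(\mathcal P_p)$. I would produce an element of $\mathcal L|_{M_s}$ vanishing at $p$ whose linear part is $A$ by a formal Moser-type normalization. Since $\omega|_{M_s}$ is regular and non-vanishing at $p$, one can build formal coordinates $y_1,\ldots,y_m$ on the completion of $M_s$ at $p$ such that $\omega|_{M_s} = dy_1\wedge\cdots\wedge dy_m$; in such coordinates the linear vector field $v_0 = \sum_{i,j} A^i_j\, y_j\, \partial/\partial y_i$ has divergence $\mathrm{tr}(A) = 0$, hence preserves the formal volume form, and its linear part at $p$ equals $A$. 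By (Hyp2) this $v_0$ belongs to $\mathcal L|_{M_s}$, and by the very definition of the restriction $\mathcal L|_{M_s}$ it arises as $\vec X|_{M_s}$ for some $\vec X\in \mathcal L$ based at $p$. The linear part of $\vec X$ at $p$, viewed as an element of $\mathrm{T}^*_pM\otimes \mathcal P_p$, restricts on $\mathcal P_p$ to the linear part of $v_0$, which is $A$; thus $A$ lies in the image $\mathfrak g^0_p|_{\mathcal P_p}$.

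The main obstacle I anticipate is the formal Moser step: one must justify that any rational volume form on $M_s$ that is regular and non-zero at $p$ is equivalent to the standard volume form in the formal completion at $p$. I would handle this order by order, constructing the coordinate change as a formal power series; the cohomological obstructions vanish because exact $m$-forms on a formal neighborhood of a point are trivial, so the standard Moser homotopy works verbatim. Once this normalization is in place, the remainder of the argument is purely algebraic: the trace-free linear vector field is an explicit element, and the passage from $\mathcal L|_{M_s}$ back to $\mathcal L$ is tautological from how the restriction was defined.
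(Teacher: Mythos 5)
Your proof is correct and follows essentially the same two-inclusion argument as the paper: the forward inclusion via the trace computation of $\mathrm{Lie}_{\vec X}\omega$ at the fixed point, and the reverse inclusion by producing a divergence-free formal vector field on $M_s$ with prescribed trace-free linear part and lifting it through the definition of $\mathcal L|_{M_s}$. The only difference is that you spell out, via a formal Moser normalization of $\omega|_{M_s}$, the existence step that the paper simply asserts as a direct consequence of (Hyp2); this is a harmless (and legitimate) filling-in of detail.
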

\begin{proof} 
Let's us fix $p\in M$ and see that $\mathfrak g_p^0|_{\mathcal P_p} = \mathfrak{sl}(\mathcal P_p)$. Let $s$ be $\rho(p)$.
\begin{itemize}
    \item[(a)] $\mathfrak g_p^0|_{\mathcal P_p} \subseteq \mathfrak{sl}(\mathcal P_p)$. 
    Let $v^1 \in \mathfrak g_p^0$. Let $\vec X\in \mathcal L_p$ be a formal vector field with base point at $p$ whose linear part is $v^1$. Then $v^1|_{\mathcal P_p}$ is the linear part of $\vec X|_{M_s}$. By (Hyp2) in Theorem \ref{th:kiso-mori} we have that $\vec X|_{M_s}$ is a divergence free formal vector field with respect to the volume $\omega$. Therefore, its linear part $v^1|_{\mathcal P_p}$ is trace free.
    \item[(b)] $\mathfrak{sl}(\mathcal P_p) \subseteq \mathfrak g_p^0|_{\mathcal P_p}$. Let us fix $w^1\in {\rm sl}(\mathcal P_p)$. By (Hyp2) in Theorem \ref{th:kiso-mori} there is a formal vector field $\vec Y \in (\mathcal L|_{M_s})_p$ vanishing at $p$ and whose linear part is $w^1$. Then, there is $\vec X\in \mathcal L_p$ such that $\vec X|_{M_s} = \vec Y$. The linear part of $\vec X$ at $p$ is in $\mathfrak g^0$ and its restriction to $\mathcal P_p$ is $w^1$.
\end{itemize}
\end{proof}

Define $\mathfrak a = \mathfrak b\cap \mathfrak g^0$. we have an exact sequence:
$$
\xymatrix{ 
0 \ar[r] & \mathfrak a \ar[r] & \mathfrak g^0 \ar[r] & \mathfrak h \ar[r] & 0
}
$$
with $ \mathfrak a \subset \mathfrak b$.

\begin{lemma}
$\mathfrak{sl}(\mathcal P)$ acts on $\mathfrak b$ by left composition with the properties:
\begin{itemize}
    \item[(a)] For all $g\in \mathfrak g^0_p$, $b\in \mathfrak b_p$, $[g,b] = g|_{\mathcal P_p}\circ a.$
    \item[(b)] The action of $\mathfrak{sl}(\mathcal P)$ in $\mathfrak b$ preserves $\mathfrak a$.
\end{itemize}
\end{lemma}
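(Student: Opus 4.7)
The plan is to prove both parts by a direct manipulation of the commutator of endomorphisms of $\mathrm{T}_pM$, exploiting in a crucial way the fact that every element of $\mathfrak g^0_p$ and of $\mathfrak b_p$ takes values in $\mathcal P_p$, while elements of $\mathfrak b_p$ additionally vanish on $\mathcal P_p$. For (a), I would fix $g\in \mathfrak g^0_p$, viewed as an endomorphism of $\mathrm{T}_pM$ with image in $\mathcal P_p$, and $b\in \mathfrak b_p$, viewed as an endomorphism of $\mathrm{T}_pM$ with image in $\mathcal P_p$ that vanishes on $\mathcal P_p$. Then $b\circ g=0$ because the image of $g$ is contained in $\mathcal P_p$, where $b$ is zero; meanwhile $g\circ b$ factors through the restriction $g|_{\mathcal P_p}$ since $b(\mathrm{T}_pM)\subset \mathcal P_p$. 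Hence $[g,b]=g\circ b - b\circ g = g|_{\mathcal P_p}\circ b$, which is exactly (a) (after correcting the evident typo).

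To make sense of the left action of $\mathfrak{sl}(\mathcal P)$ on $\mathfrak b$, I would invoke the previously established exact sequence
\[
0\longrightarrow \mathfrak a \longrightarrow \mathfrak g^0 \overset{\mathrm{rst}|_{\mathcal P}}{\longrightarrow} \mathfrak{sl}(\mathcal P) \longrightarrow 0
\]
to lift any $h\in \mathfrak{sl}(\mathcal P_p)$ to an element $g\in \mathfrak g^0_p$ with $g|_{\mathcal P_p}=h$, and set $h\cdot b := h\circ b = g|_{\mathcal P_p}\circ b$. This is well defined: two lifts differ by an element of $\mathfrak a\subset \mathfrak b$, which vanishes on $\mathcal P_p$, and $b$ takes values in $\mathcal P_p$, so the ambiguity composes to $0$. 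That this really is a Lie algebra action follows from the associativity of composition and formula (a), which identifies the action with the adjoint action of $\mathfrak g^0$ on $\mathfrak b$ modulo $\mathfrak a$.

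For (b), I would apply (a) directly. Given $a\in \mathfrak a_p=\mathfrak b_p\cap \mathfrak g^0_p$ and $h\in \mathfrak{sl}(\mathcal P_p)$, pick a lift $g\in \mathfrak g^0_p$ with $g|_{\mathcal P_p}=h$; then (a) yields $h\circ a = [g,a]$. Since $\mathfrak g^0$ is a Lie subalgebra bundle of $\mathrm{End}(\mathrm{T}M)$ and $g,a\in \mathfrak g^0_p$, the bracket lies in $\mathfrak g^0_p$, so $h\circ a\in \mathfrak g^0_p$. On the other hand, $h\circ a$ visibly vanishes on $\mathcal P_p$ because $a$ does, so $h\circ a\in \mathfrak b_p$. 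Combining the two, $h\circ a\in \mathfrak g^0_p\cap \mathfrak b_p=\mathfrak a_p$, as required. I do not anticipate a serious obstacle: the whole statement is essentially a bookkeeping exercise about compositions of matrices that land in $\mathcal P$ with maps that annihilate $\mathcal P$, and the only nontrivial input is the surjectivity of $\mathrm{rst}|_{\mathcal P}\colon \mathfrak g^0\to \mathfrak{sl}(\mathcal P)$ already established.
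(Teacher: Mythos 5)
Your proof is correct and follows essentially the same route as the paper's: compute $[g,b]=g\circ b-b\circ g$, kill $b\circ g$ because $g$ lands in $\mathcal P_p$ where $b$ vanishes, identify $g\circ b$ with $g|_{\mathcal P_p}\circ b$ because $b$ lands in $\mathcal P_p$, and then deduce (b) from $h\circ a=[\tilde h,a]\in\mathfrak g^0_p\cap\mathfrak b_p=\mathfrak a_p$. Your correction of the typo ($a$ for $b$ in the statement of (a)) and your extra remarks on well-definedness of the $\mathfrak{sl}(\mathcal P)$-action are consistent with the paper's intent.
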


\begin{proof} 
For $h\in \mathfrak{sl}(\mathcal P_p)$ and $b\in \mathfrak b_p$. Let $\tilde h\in \mathfrak g^0_p$ be any representative such that $\tilde h|_{\mathcal P_p} = h$. Since the image of $b$ is contained in $\mathcal P_h$ we have:
$$[\tilde h,b] = \tilde h\circ b - b \circ \tilde h = \tilde h \circ b = h\circ b.$$
Finally, in the above equation, if $b\in \mathfrak a_p$ then $b\in \mathfrak g^0$, and then
$[\tilde h,a] = h\circ a\in \mathfrak g^0_p$ we also have $h\circ a \in \mathfrak b_p$ therefore
$h\circ a \in \mathfrak g_p^0\cap b_p = \mathfrak a_p.$
\end{proof}

The following result is Morimoto's Lemma (Proposition 4.1. \cite{kiso1979local} or Lemma 9.1. in \cite{morimoto1977intransitive}). Since we additionally state the rationality of the bundle, we include the proof.

\begin{lemma}\label{lm:morimoto}
There is a rational linear sub-bundle $\mathcal A\subset \rm TM$ such that $\mathfrak a = \mathcal A^\perp \otimes \mathcal P.$
\end{lemma}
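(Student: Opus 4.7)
The plan is to argue pointwise using representation theory, then upgrade to a rational sub-bundle. Fix $p\in M$ and set $V_p = \mathcal P_p$, $U_p = \mathcal P^\perp_p = \rho^*T^*_{\rho(p)}S$, so that $\mathfrak b_p = U_p\otimes V_p$. The preceding lemma shows that $\mathfrak a_p$ is stable under the action $h\cdot(\xi\otimes v) = \xi\otimes h(v)$ of $\mathfrak{sl}(V_p)$ on the second factor. The key observation is that this is exactly the situation of an isotypic $\mathfrak{sl}(V_p)$-module whose multiplicity space is $U_p$, and therefore every submodule splits as a tensor product.

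Concretely, assuming $\dim V_p=m\geq 2$ (the case $m=1$ is trivial as then $\mathfrak{sl}(V_p)=0$ but also $U_p\otimes V_p=U_p$ and any subspace has the desired form), the standard representation $V_p$ of $\mathfrak{sl}(V_p)$ is irreducible and satisfies $\operatorname{End}_{\mathfrak{sl}(V_p)}(V_p)=\mathbb C$ by Schur's lemma. The natural map
\[
U_p\longrightarrow \operatorname{Hom}_{\mathfrak{sl}(V_p)}(V_p,\, U_p\otimes V_p),\qquad \xi\longmapsto (v\mapsto \xi\otimes v),
\]
is then an isomorphism, and for any $\mathfrak{sl}(V_p)$-submodule $W\subset U_p\otimes V_p$ the evaluation map
$\operatorname{Hom}_{\mathfrak{sl}(V_p)}(V_p,W)\otimes V_p\to W$ is an isomorphism. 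Applied to $W=\mathfrak a_p$, this identifies
\[
\mathfrak a_p = V_p^{\flat}\otimes V_p,\qquad \text{where } V_p^{\flat}:=\{\xi\in U_p : \xi\otimes V_p\subset \mathfrak a_p\}\subset \mathcal P^\perp_p.
\]

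It remains to see that $p\mapsto V_p^\flat$ is a rational sub-bundle of $\mathcal P^\perp$. Since $\mathfrak a=\mathfrak b\cap\mathfrak g^0$ is an intersection of rational linear sub-bundles of $\operatorname{Lin}(TM,\mathcal P)$, it is itself rational. The description $V_p^\flat = \{\xi\in \mathcal P^\perp_p : \xi\otimes v\in \mathfrak a_p \text{ for all } v\in \mathcal P_p\}$ is cut out by linear conditions that vary rationally with $p$, so the assignment defines a rational sub-bundle $\mathcal B\subset \mathcal P^\perp = \rho^*T^*S$. Setting $\mathcal A:=\mathcal B^\perp\subset TM$ (which automatically contains $\mathcal P$) then gives $\mathfrak a = \mathcal A^\perp\otimes\mathcal P$ as required.

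The one step requiring care is the bookkeeping in this last paragraph: checking that the pointwise construction $p\mapsto V_p^\flat$ actually yields a linear bundle of constant rank on a Zariski-dense open set, rather than only a constructible assignment. This follows from generic flatness applied to the coherent sheaf of sections of $\mathfrak a$ viewed as a subsheaf of $\mathcal P^\perp\otimes \mathcal P$, together with the fact that the fiberwise decomposition $\mathfrak a_p = V_p^\flat\otimes V_p$ is canonical, so $V^\flat$ inherits the rank-constancy from $\mathfrak a$ on the open set where both $\mathfrak a$ and $\mathcal P$ are regular linear bundles.
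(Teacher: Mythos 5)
Your proof is correct, and the mathematical engine is the same as the paper's: Schur's lemma applied to the standard representation of $\mathfrak{sl}(\mathcal P_p)$, exploiting that $\mathfrak b=\mathcal P^\perp\otimes\mathcal P$ is isotypic with the action concentrated on the second tensor factor. The packaging differs in a way worth noting. The paper first decomposes $\mathfrak a$ into irreducible $\mathfrak{sl}(\mathcal P)$-invariant summands at the level of rational sections, then contracts each summand against vector fields to extract a rational $1$-form $\theta_j$; since splitting into irreducibles over the function field may require passing to a finite covering of an open subset, the paper has to add a descent remark at the end to see that $\langle\theta_1,\ldots,\theta_r\rangle=\Gamma^{\rm rat}(\ker\mathfrak a)^\perp$ is defined over $M$. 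You instead use the canonical isotypic description $\mathfrak a_p=\operatorname{Hom}_{\mathfrak{sl}(V_p)}(V_p,\mathfrak a_p)\otimes V_p$ with the multiplicity space realized as $V_p^\flat=\{\xi:\xi\otimes V_p\subset\mathfrak a_p\}$; because this space is characterized by linear conditions canonically attached to $\mathfrak a_p$ (no choice of irreducible decomposition), rationality of $p\mapsto V_p^\flat$ comes for free from rationality of $\mathfrak a$ and the rank identity $\operatorname{rank}\mathfrak a=m\cdot\dim V^\flat$, so the finite-covering detour disappears. Your explicit treatment of the $\dim\mathcal P_p=1$ case is also a small completeness gain over the paper. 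Both arguments land on the same $\mathcal A$: your $\mathcal B^\perp$ is exactly the paper's $\ker(\mathfrak a)$.
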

\begin{proof}
If $\mathfrak a = 0$ the lemma is true. Let us assume $\mathfrak a\neq 0$. Along the proof of the lemma we replace $M$ by an affine open subset, or by a finite covering of an affine open subset whenever we need. At the end we will check that the linear bundle $\mathcal A$ of the statement is well defined over $M$. Then, we may assume that $\mathfrak a$ decomposes as direct sum of $\mathfrak{sl}(\mathcal P)$-invariant irreducible bundles, 
$$\mathfrak a = \mathfrak a_1 \oplus \ldots \oplus \mathfrak a_r$$
all of them different from $0$. For each $j = 1,\ldots,r$ let us consider the internal contraction:
$${\rm int}^{(j)}\colon \mathfrak X_M \to {\Gamma}^{\rm rat}({\rm Lin}(\mathfrak a_j, \mathcal P)).$$
Here ${\rm int}^{(j)}(\vec X)\colon a\mapsto a(\vec X)$ where $a$ is a section of linear maps from ${\rm T}M$ to $\mathcal P$. The internal contraction is compatible with the $\mathfrak{sl}(\mathcal P)$-action and therefore for any $\vec X\in \mathfrak X_M$
${\rm int}^{(j)}(\vec X)\colon \Gamma^{\rm rat}(\mathfrak a_1) \to \Gamma^{\rm rat}(\mathcal P)$ is 
$\Gamma^{\rm rat}(\mathfrak{sl}(\mathcal P))$-equivariant. 
By Schur's Lemma the dimension of the image of ${\rm int}^{(j)}$ is either $0$ or $1$. If it is $0$, then it follows that $\mathfrak a_j= 0$ which is in contradiction with our hypothesis. Therefore we have that the image of ${\rm int^{(j)}}$ has dimension $1$ and its kernel has codimension $1$. Thus, there is a rational $1$-form $\theta_j\in \Omega^1_M$ such that $\langle \theta_j \rangle^\perp = \ker({\rm int}^{(j)})$, and an isomorphism $\phi_j \colon \Gamma^{\rm rat}(\mathfrak a_1) \to \Gamma^{\rm rat}(\mathcal P)$ such that ${\rm int}^{(j)}(\vec X)(a) = \theta_j(\vec X)\phi_j(a)$. This yields:
$\Gamma^{\rm rat}(\mathfrak a_j) = \langle \theta_j \rangle\tens_{\mathbb C(M)} \Gamma^{\rm rat}(\mathcal P)$ and therefore:
$$\Gamma^{\rm rat}(\mathfrak a) = \langle \theta_1,\ldots,\theta_r\rangle \otimes \Gamma^{\rm rat}(\mathcal P).$$
As we explained before, the forms $\theta_j$ are defined on a finite covering of a Zariski open subset of $M$. However
we have $\Gamma^{\rm rat}(\ker(\mathfrak a))^{\perp} = \langle \theta_1,\ldots,\theta_r\rangle$. Now, $\mathcal A = \ker(\mathfrak a)$
is a rational linear sub-bundle of ${\rm TM}$ and we have $\mathfrak a = \mathcal A^\perp \otimes \mathcal P$ as stated. 
\end{proof}

\subsection{Construction of the invariant foliation} Motivated by Lemma \ref{lm:morimoto} we define
$\mathcal A = \ker(\mathfrak a)$ and $\mathcal H = \ker(\mathfrak g^0)$ that we see as rational linear bundles $\mathcal H \subset\mathcal A \subset {\rm T}M$.

\begin{lemma}
$ \mathcal H \cap \mathcal P = \{0\}$. 
\end{lemma}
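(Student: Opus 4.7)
The plan is to combine two facts already established: first, that the restriction map $\mathfrak g^0 \to \mathfrak{sl}(\mathcal P)$ is surjective (this is the content of the lemma $\mathfrak g^0|_{\mathcal P} = \mathfrak{sl}(\mathcal P)$); second, that by definition $\mathcal H = \ker(\mathfrak g^0)$, i.e.\ $v \in \mathcal H_p$ iff $g(v) = 0$ for every $g \in \mathfrak g^0_p$.

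Take $v \in \mathcal H_p \cap \mathcal P_p$ and fix an arbitrary $h \in \mathfrak{sl}(\mathcal P_p)$. By surjectivity of the restriction there is a lift $\tilde h \in \mathfrak g^0_p$ with $\tilde h|_{\mathcal P_p} = h$, viewed as a linear map ${\rm T}_p M \to \mathcal P_p$. Since $v \in \mathcal P_p$, one has $\tilde h(v) = h(v)$, and since $v \in \mathcal H_p$, one has $\tilde h(v) = 0$. Therefore $h(v) = 0$ for every $h \in \mathfrak{sl}(\mathcal P_p)$.

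The proof is then completed by invoking the elementary fact that the standard (defining) representation of $\mathfrak{sl}(\mathcal P_p)$ on $\mathcal P_p$ has no nonzero common zero: if $v \neq 0$, complete it to a basis $v, e_2, \ldots, e_m$ of $\mathcal P_p$ and consider the traceless rank-one endomorphism sending $v \mapsto e_2$ and $e_j \mapsto 0$ for $j \geq 2$; this element of $\mathfrak{sl}(\mathcal P_p)$ does not kill $v$, a contradiction. Hence $v = 0$ and $\mathcal H \cap \mathcal P = \{0\}$.

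There is no real obstacle here; the only delicate point is the edge case $\dim \mathcal P_p = 1$, in which $\mathfrak{sl}(\mathcal P_p) = 0$ and the argument degenerates. This case is excluded from the Kiso--Morimoto setting at hand (where the invariant volume form on the fibers forces $m = \dim \mathcal P \geq 2$, and in particular $m = 2$ for the Painlev\'e VI application where $\mathcal P$ is the $(p,q)$-plane).
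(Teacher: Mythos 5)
Your proof is correct and follows essentially the same route as the paper's one-line argument: for $v \in \mathcal H \cap \mathcal P$ one has $\mathfrak g^0 v = \mathfrak{sl}(\mathcal P)\, v = 0$, hence $v = 0$, using that the image of $\mathfrak g^0$ in $\mathrm{End}(\mathcal P)$ is all of $\mathfrak{sl}(\mathcal P)$. You merely spell out the two points the paper leaves implicit --- the explicit traceless rank-one endomorphism showing the defining representation of $\mathfrak{sl}(\mathcal P_p)$ has no nonzero common zero, and the degenerate case $\dim \mathcal P_p = 1$, which is indeed excluded in the setting at hand.
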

\begin{proof} 
Take $v\in \mathcal H \cap \mathcal P$. We have $\mathfrak g^0v = \mathfrak{sl}(\mathcal P)v = 0$ and then $v = 0$.
\end{proof}

In order to prove the Frobenius integrability of the linear bundle $\mathcal H$ we have to discuss the symbol of differential operators. The $\mathcal D_M$-module $N$ is graded by the order,
$$N_0 = \rho^*\Omega^1_S \subset N_1 \subset N_2 \subset \ldots \subset N$$
Given a differential operator in $N_k$ its class in $\bar{N}_k = N_k/N_{k-1}$ is called its symbol. We may see, equivalently, that the symbol of a differential operator is its homogeneous part of higher order. 

An application of the graded bundle $\mathfrak g$ is that the space of symbols of order $k$ in $N$ is encoded in the bundle $\mathfrak g^{k-1}$. We only need to discuss symbols of order $1$. The coupling of a first order differential operator, 
$$\Theta = \sum \vec X_j \otimes \ell_j \in \mathfrak X_M\tens_{\mathbb C(M)} \Omega_M^1$$
with the linear part $g^1$ of a vector field $\vec g$ vanishing at $p\in M$ is given by:
$$\Theta(g^1) = \Theta(\vec g)(p) = \sum \vec X_j(\ell_j(\vec g)) = 
\sum \left({\rm Lie}_{\vec X_j}\ell_j(\vec g_p) + \ell[\vec X,\vec g]_p)\right) =
- \ell(g^1(\vec X_p))$$

\begin{proposition}
The rational linear bundle $\mathcal H = \ker(\mathfrak g^0)$ is a singular foliation in $M$.
\end{proposition}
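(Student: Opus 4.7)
The goal is to show $\mathcal H = \ker \mathfrak g^0$ is involutive: for rational sections $\vec Y_1, \vec Y_2$ of $\mathcal H$, the Lie bracket $[\vec Y_1, \vec Y_2]$ is again a rational section of $\mathcal H$. My plan is to combine the Jacobi identity with a symbol/prolongation computation exploiting the $\mathcal D_M$-module structure of $N = \mathrm{ann}(\mathcal L)$.

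Fix two such $\vec Y_1, \vec Y_2$ near a generic point $p$, and fix $g^1 \in \mathfrak g^0_p$, realized as the linear part at $p$ of some $\vec X \in \mathcal L$ with $\vec X_p = 0$. Since $\vec Y_i(p) \in \mathcal H_p = \ker \mathfrak g^0_p$, the standard identity $[\vec X, \vec Y]_p = -g^1(\vec Y(p))$ for a vector field $\vec X$ vanishing at $p$ with linear part $g^1$ yields $[\vec X, \vec Y_i]_p = 0$. Writing $\tilde h_i$ for the linear part of $[\vec X, \vec Y_i]$ at $p$, the Jacobi identity evaluated at $p$ simplifies to
$$
-g^1\bigl([\vec Y_1, \vec Y_2]_p\bigr) = \tilde h_2(\vec Y_1(p)) - \tilde h_1(\vec Y_2(p)).
$$
Once $\tilde h_i \in \mathfrak g^0_p$ is established for $i=1,2$, both right-hand terms vanish because $\vec Y_j(p) \in \ker \mathfrak g^0_p$, and the arbitrariness of $g^1$ then forces $[\vec Y_1, \vec Y_2]_p \in \mathcal H_p$, giving involutivity.

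The crux is therefore to verify $\tilde h_i \in \mathfrak g^0_p$. By the coupling formula just computed in the excerpt, this reduces to checking $\Theta([\vec X, \vec Y_i])(p) = 0$ for every first-order $\Theta \in N_1$. Using the Cartan-type identity $\Theta([\vec X, \vec Y]) = (\mathrm{Lie}_{\vec Y}\Theta)(\vec X) - \vec Y(\Theta(\vec X))$, together with $\Theta(\vec X) = 0$ since $\vec X \in \mathcal L$, the problem reduces to $(\mathrm{Lie}_{\vec Y_i} \Theta)(\vec X)|_p = 0$. The operator $\mathrm{Lie}_{\vec Y_i}\Theta$ need not itself lie in $N$, but when expanded as a symbol against $\vec X_p = 0$ and the quadratic part of $\vec X$ (which is controlled by the second symbol $\mathfrak g^1$ via the prolongation of $N_1$ inside $N_2$), all residual terms split into two groups: those cancelled by $\mathcal D_M$-submodule relations in $N$ applied to $\vec X \in \mathcal L$, and those proportional to $g'(\vec Y_i(p))$ for some $g' \in \mathfrak g^0_p$ --- and these latter vanish precisely because $\vec Y_i(p) \in \ker \mathfrak g^0_p$.

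The main obstacle is the systematic symbol book-keeping in that last step: one must match each remainder term arising from distributing the derivations over the Lie bracket either to a $\mathcal D_M$-module relation in $N$ or to the $\mathcal H$-tangency condition on $\vec Y_i$. This is the point where the fact that $\mathcal L$ is a \emph{$\mathcal D$-Lie algebra} (not merely a Lie algebra of formal vector fields) enters in a structural way, and is the technical heart of the Morimoto-style argument in \cite{kiso1979local, morimoto1977intransitive}. Once the symbolic computation is carried out, the Jacobi argument above yields involutivity of $\mathcal H$, and $\mathcal H$ therefore defines a singular foliation on $M$ as claimed.
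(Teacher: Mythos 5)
Your reduction via the Jacobi identity is set up correctly, but the intermediate claim it hinges on --- that the linear part $\tilde h_i$ of $[\vec X,\vec Y_i]$ at $p$ lies in $\mathfrak g^0_p$, equivalently that $\Theta([\vec X,\vec Y_i])(p)=0$ for every $\Theta\in N_1$ --- is false, not merely unproved. The space $\mathfrak g^0_p$ sits inside ${\rm T}^*_pM\otimes\mathcal P_p$, whereas the bracket of a fiber-tangent field $\vec X\in\mathcal L$ with a section $\vec Y_i$ of $\mathcal H$ is in general \emph{not} tangent to $\mathcal P$: the projection formula $d\rho([\vec X,\vec Y_i])=[\rho_*\vec X,\rho_*\vec Y_i]$ does not apply because $\vec Y_i$ need not be $\rho$-projectable. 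Concretely, in the adapted coordinates $(x,s,t)$ of the last subsection of the appendix (with $\mathcal H=\langle\partial/\partial t_j\rangle$ and $\omega=dx_1\wedge\cdots\wedge dx_m$, a situation satisfying (Hyp1) and (Hyp2)), take $\vec X=x_2\,\partial/\partial x_1\in\mathcal L$ and $\vec Y=x_1\,\partial/\partial t_1$ tangent to $\mathcal H$; then $[\vec X,\vec Y]=x_2\,\partial/\partial t_1$, whose linear part at the origin is $dx_2\otimes\partial/\partial t_1\notin{\rm T}^*M\otimes\mathcal P$, and the operator $\Theta=(\partial/\partial x_2)\otimes dt_1\in\mathcal D_M\otimes\rho^*\Omega^1_S\subseteq N_1$ gives $\Theta([\vec X,\vec Y])(0)=1\neq 0$. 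What your Jacobi computation actually needs is only the weaker statement that $\tilde h_i$ annihilates $\mathcal H_p$; but that statement says precisely that $\mathcal L$ normalizes $\mathfrak X_{\mathcal H}$ to first order, which is part of the \emph{conclusion} of the Kiso--Morimoto theorem. So the ``symbol book-keeping'' you defer is not a routine verification: it is the whole content of the proposition, and the reduction as written is circular.

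The paper's proof avoids bracketing elements of $\mathcal L$ with sections of $\mathcal H$ altogether and works in the annihilator module $N={\rm ann}(\mathcal L)$. Tangency of a vector field $\vec X$ to $\mathcal H=\ker\mathfrak g^0$ is characterized by the condition that $\vec X\otimes\ell$ is the principal symbol of an element of $N_1$ for every $\ell\in\Omega^1_M$; then, choosing $D_1=\vec X\otimes\ell+\alpha$, $D_2=\vec Y\otimes\ell+\beta$, $D_3=\vec X\otimes\beta+\beta'$, $D_4=\vec Y\otimes\alpha+\alpha'$ in $N_1$, the left $\mathcal D_M$-module structure of $N$ yields $\vec X\circ D_2-\vec Y\circ D_1-D_3+D_4\in N_1$ with principal symbol $[\vec X,\vec Y]\otimes\ell$, and involutivity follows. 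If you want to salvage a bracket-based argument you would end up reproducing this operator-level computation anyway; I suggest adopting it directly.
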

\begin{proof} 
From the above discussion if follows that a rational vector field $\vec X$ is tangent to $\mathcal H = {\rm ker}(\mathfrak g^0)$ if and only if for any rational $1$-form $\ell$ in $M$ the symbol $X\otimes \ell$ is in $\bar N_1$.

Let us consider $\vec X$ and $\vec Y$ two rational vector fields tangent to $\mathcal H$ and consider any $\ell \in \Gamma^1_M$. There are $\alpha, \beta \in \Omega^1_M$ (differential operators of order zero) such that $D_1 = \vec X\otimes \ell + \alpha$ and $D_2 = \vec Y \otimes \ell + \beta$ are in $N_1$. Moreover, by the same reason, there are $\alpha', \beta' \in \Omega^1_M$ (differential operators of order zero) such that $D_3 \vec X\otimes \beta + \beta'$ and $D_4 =\vec Y \otimes \alpha + \alpha'$ are in $N_1$. Then,
$$\vec X\circ D_2 - \vec Y \circ D_1 - D_3 + D_4 = [\vec X,\vec Y]\otimes \ell + (\alpha' - \beta')$$
is in $N_1$. We have seen that for all $\ell\in \Omega^1_M$ the symbol
$[\vec X,\vec Y]\otimes \ell$ is in $\bar N_1$ and from this it follows that $[\vec X,\vec Y]$ is tangent to $\mathcal H$.
\end{proof}

\begin{lemma} \label{mathcal G is rho projectable}
$ \mathcal H$ is $\rho$--projectable.
\end{lemma}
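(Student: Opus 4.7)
The plan is to verify the infinitesimal criterion for $\rho$-projectability of $\mathcal H$: for every rational vector field $\vec W \in \mathfrak X_{\mathcal P}$ and every rational section $\vec X$ of $\mathcal H$, the Lie bracket $[\vec W,\vec X]$ lies in $\Gamma_{\rm rat}(\mathcal H) + \mathfrak X_{\mathcal P}$. Combined with $\mathcal H \cap \mathcal P = \{0\}$, which has already been established and which makes $d\rho|_{\mathcal H}$ fiberwise injective, this criterion will force $d\rho(\mathcal H_p)$ to depend only on $\rho(p)$ and so to descend (over a Zariski open subset of $S$) to a rational sub-bundle $\mathcal F\subset {\rm T}S$ of rank equal to $\mathrm{rank}(\mathcal H)$.

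The key preliminary step is to show that any rational vector field $\vec Y\in\mathfrak X_M$ whose jet lies in $\mathcal L$ satisfies $[\vec Y,\Gamma_{\rm rat}(\mathcal H)] \subset \Gamma_{\rm rat}(\mathcal H)$. I would argue this from the intrinsic description of $\mathfrak g^0$ as the linear isotropy of $\mathcal L$ (Definition \ref{df:gk}): the flow $\phi^t$ of such $\vec Y$ preserves the bundle $\mathcal L$, because $\mathcal L$ exponentiates to a $\mathcal D$-groupoid whose adjoint action on $J({\rm T}M/M)$ stabilizes $\mathcal L$; hence $\phi^t$ sends the formal vector fields in $\mathcal L$ vanishing at a source point to those vanishing at the corresponding target, transports their linear parts, and thus preserves the sub-bundle $\mathfrak g^0$ together with its kernel $\mathcal H$. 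Differentiating in $t$ at the identity then gives the bracket assertion.

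To upgrade from vector fields in $\mathcal L$ to arbitrary elements of $\mathfrak X_{\mathcal P}$ I would use that $\mathcal L$ is large enough to span: in rational local coordinates $(x_1,\ldots,x_m)$ along the fibers of $\rho$ with $\omega|_{\mathcal P} = g\,dx_1\wedge\cdots\wedge dx_m$, each vector field $\vec Y_i = g^{-1}\,\partial/\partial x_i$ is divergence free on every fiber, so its jet lies in $\mathcal L$ by hypothesis (Hyp2); the $\vec Y_i$ obviously $\mathbb C(M)$-generate $\mathfrak X_{\mathcal P}$. For $\vec W = \sum_i f_i\vec Y_i\in \mathfrak X_{\mathcal P}$ and $\vec X \in \Gamma_{\rm rat}(\mathcal H)$ the Leibniz rule yields
$$[\vec W,\vec X] = \sum_i f_i\,[\vec Y_i,\vec X] - (\vec X\cdot f_i)\vec Y_i,$$
where the first sum lies in $\Gamma_{\rm rat}(\mathcal H)$ by the preliminary step and the second in $\mathfrak X_{\mathcal P}$, which will close the argument. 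I expect the main obstacle to be the preliminary invariance step: the definition of a $\mathcal D$-Lie algebra only demands closure under Lie brackets of rational sections, so one must carefully establish that the flow of any $\vec Y\in\mathcal L$ stabilizes the bundle $\mathcal L$ (equivalently, that the exponentiation of $\mathcal L$ to a $\mathcal D$-groupoid respects the Lie algebra structure) before invoking it.
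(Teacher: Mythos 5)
Your skeleton is recognizably the infinitesimal version of the paper's argument: the paper takes a (convergent) vector field $\vec X$ belonging to $\mathcal L$, notes that its flow $\sigma_t$ preserves $\mathcal L$, hence the linear isotropy $\mathfrak g^0$, hence $\mathcal H=\ker\mathfrak g^0$, and then uses $\rho\circ\sigma_t=\rho$ together with the transitivity of the $\mathcal L$-action on the fibers to conclude $d_p\rho(\mathcal H_p)=d_q\rho(\mathcal H_q)$ along each fiber. The genuine gap in your version is the claim that $\vec Y_i=g^{-1}\,\partial/\partial x_i$ lies in $\mathcal L$ because it is divergence free on every fiber. Hypothesis (Hyp2) describes only $\mathcal L|_{M_s}$, which is by definition the \emph{image} of $\mathcal L$ under restriction to the fiber, not a subspace of $\mathcal L$: it says every divergence-free formal field on $M_s$ is the restriction of \emph{some} element of $\mathcal L$, not that every vertical field with divergence-free fiberwise restrictions belongs to $\mathcal L$. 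Indeed the conclusion of Theorem \ref{th:kiso-mori} forces elements of $\mathcal L$ to preserve $\mathcal H$ in addition, which the coordinate fields generally do not. Concretely, take $M=\mathbb C^3_{s,x,y}\to S=\mathbb C_s$, $\omega=dx\wedge dy$, and let $\mathcal L$ be the fiberwise divergence-free vertical fields commuting with $\partial_s+y\,\partial_x$; this satisfies (Hyp1) and (Hyp2), yet $\partial_y\notin\mathcal L$ since $[\partial_y,\partial_s+y\,\partial_x]=\partial_x$ is vertical and nonzero. So your generating set need not sit inside $\mathcal L$, and the Leibniz step cannot be launched from it.

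The paper sidesteps this because it never needs a $\mathbb C(M)$-generating set of $\mathfrak X_{\mathcal P}$ inside $\mathcal L$: it only needs enough genuine solutions of $\mathcal L$ for their flows to connect generic points of a fiber, which does follow from (Hyp2) since the restrictions of $\mathcal L$ to a fiber exhaust the divergence-free fields and already span $\mathcal P$ pointwise. If you want to keep the infinitesimal route, you could note that along a fixed fiber $M_s$ your $\vec Y_i$ differs from a formal representative $\vec Z\in\mathcal L$ by a vertical field vanishing on $M_s$, and that bracketing such a field with anything produces a vertical vector at points of $M_s$, so $[\vec Y_i,\vec X]_p\equiv[\vec Z,\vec X]_p \bmod \mathcal P_p$; but then you must still justify that \emph{formal} elements of $\mathcal L$ infinitesimally preserve $\mathcal H$ (your flow argument, like the paper's, applies only to convergent solutions), so the repair is not free.
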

\begin{proof} 
Let $\vec X$ be a vector field inside $\mathcal L$ that do not vanish at $p$ and is  tangent to the fibers of $\rho$. Let $\sigma_t$ be the flow of $\vec X$ at time $t$.
As $\vec X$ is in $\mathcal L$, the flows of $X$ transform $\mathcal L$ into $\mathcal L$. Then $d\sigma : {\rm T}_p M \to {\rm T}_{\sigma(p)}M$ conjugates $\mathcal H_p$ with $\mathcal H_{\sigma(p)}$. We get an isomorphism $\mathcal H_p \cong \mathcal H_{\sigma(p)}$. As $\rho\circ \sigma^{-1} = \rho$ it follows $d_p\rho(\mathcal H_p) = d_{\sigma(p)}\rho( \mathcal H_{\sigma(p)})$.
Then $d_p\rho(\mathcal G_p) = d_{q}\rho( \mathcal G_{q})$ whenever $p$ and $q$ can be joined by an integral curve in $\mathcal L$. As the action of $\mathcal L$ is transitive in the fibers of $\rho$ the lemma follows.
\end{proof}

From now on, let $\mathcal F$ be the projection $\rho_*\mathcal H$. Note that $\mathcal F$ is a singular foliation on $S$.

\begin{lemma}
We have the decomposition $\mathcal A = \mathcal P \oplus \mathcal H$ 
\end{lemma}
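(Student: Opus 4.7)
The plan is to reduce the equality $\mathcal A = \mathcal P \oplus \mathcal H$ to the vanishing of a Lie algebra cohomology group via the first Whitehead lemma. Three of the required relations are essentially tautological and will be dispatched first: the inclusion $\mathcal P \subset \mathcal A$ follows because $\mathfrak a \subset \mathfrak b = \mathcal P^\perp \otimes \mathcal P$, so every element of $\mathfrak a$ vanishes on $\mathcal P$; the inclusion $\mathcal H \subset \mathcal A$ is immediate from $\mathfrak a \subset \mathfrak g^0$, whence $\ker(\mathfrak g^0) \subset \ker(\mathfrak a)$; and the transversality $\mathcal P \cap \mathcal H = 0$ was already established in the preceding lemma.

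The substantial part is the inclusion $\mathcal A \subset \mathcal P + \mathcal H$, which I would verify fiberwise at a generic point $p \in M$. Given $v \in \mathcal A_p$, the linear map $\Psi \colon \mathfrak g^0_p \to \mathcal P_p$, $g \mapsto g(v)$, vanishes on $\mathfrak a_p$ because $v \in \ker(\mathfrak a)$, so it descends to a linear map $\bar\Psi \colon \mathfrak{sl}(\mathcal P_p) \to \mathcal P_p$. Using that $\mathfrak g^0_p$ is closed under the commutator bracket and that, for any lifts $g,h \in \mathfrak g^0_p$ of $\bar g,\bar h$, the images $g(v),h(v)$ lie in $\mathcal P_p$, the direct computation $[h,g](v) = h(g(v)) - g(h(v)) = \bar h(g(v)) - \bar g(h(v))$ shows that $\bar\Psi$ satisfies the cocycle identity $\bar\Psi([\bar h,\bar g]) = \bar h \cdot \bar\Psi(\bar g) - \bar g \cdot \bar\Psi(\bar h)$ for the natural action of $\mathfrak{sl}(\mathcal P_p)$ on $\mathcal P_p$. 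By the first Whitehead lemma, $H^1(\mathfrak{sl}(\mathcal P_p),\mathcal P_p) = 0$, so $\bar\Psi$ is a coboundary: there exists $v_1 \in \mathcal P_p$ with $\bar\Psi(\bar g) = \bar g(v_1)$ for every $\bar g$. Then $g(v - v_1) = 0$ for all $g \in \mathfrak g^0_p$ (trivially for $g \in \mathfrak a_p$, since both $v$ and $v_1$ lie in $\mathcal A$), so $v - v_1 \in \mathcal H_p$ and $v = v_1 + (v - v_1) \in \mathcal P_p + \mathcal H_p$.

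The main obstacle is verifying the cocycle identity with the correct signs and the correct representation; this is pure bookkeeping once one recalls that the image of $\mathfrak g^0$ in $\mathrm{End}(\mathcal P)$ acts on $\mathcal P$ by ordinary evaluation, but it is the one place where the Lie algebra structure of $\mathcal L$ (through $\mathfrak g^0$) is genuinely used. A separate minor issue is the degenerate case $m = 1$, where $\mathfrak{sl}(\mathcal P) = 0$ and the decomposition fails; this is irrelevant for the Painlev\'e VI application ($m = 2$) but it does show the theorem implicitly requires $m \geq 2$. Finally, to promote the pointwise decomposition to an equality of rational linear sub-bundles, I would note that $\mathcal P$, $\mathcal A$ and $\mathcal H$ are rational sub-bundles whose ranks are locally constant on a Zariski dense open, so the generic-fiber decomposition extends to the claimed identity of bundles.
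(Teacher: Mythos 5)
Your proof is correct, but it reaches the splitting by a different mechanism than the paper. The paper restricts the exact sequence $0\to\mathfrak a\to\mathfrak g^0\to\mathfrak{sl}(\mathcal P)\to 0$ to $\mathcal A=\ker(\mathfrak a)$, so that $\mathfrak g^0|_{\mathcal A}\cong\mathfrak{sl}(\mathcal P)$ acts on $\mathcal A$ with $\mathcal P$ as an invariant sub-bundle; it then invokes Weyl's complete reducibility for this simple Lie algebra to produce an invariant complement $\mathcal K$, and observes that $\mathfrak g^0(\mathcal K)\subseteq \mathcal K\cap\mathcal P=0$ forces $\mathcal K=\mathcal H$. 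You instead split the extension $0\to\mathcal P_p\to\mathcal A_p\to\mathcal A_p/\mathcal P_p\to 0$ by hand: since $\mathfrak g^0$ maps into $\mathcal P$, the quotient action is trivial, the evaluation map $g\mapsto g(v)$ descends to a $1$-cocycle of $\mathfrak{sl}(\mathcal P_p)$ valued in the standard module, and the first Whitehead lemma kills it. The two arguments are the same semisimplicity fact seen from either side of the equivalence between complete reducibility and the vanishing of $H^1$; yours is more explicit in that it constructs the $\mathcal P$-component $v_1$ of each $v\in\mathcal A_p$ and isolates exactly where the Lie algebra structure of $\mathfrak g^0$ (the cocycle identity) is used, while the paper's is shorter and stays at the level of bundles. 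Your side remarks are also accurate and worth keeping in mind: the case $m=1$ (where $\mathfrak{sl}(\mathcal P)=0$) is genuinely degenerate --- there the failure already occurs in the preceding lemma $\mathcal H\cap\mathcal P=0$, not in the cohomological step --- and the passage from the generic-point identity to an identity of rational sub-bundles is legitimate since the paper works on a Zariski dense open subset throughout the appendix.
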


\begin{proof}
Consider the exact sequence
$$
\xymatrix{ 
0 \ar[r] & \mathfrak a \ar[r] & \mathfrak g^0 \ar[r] & \mathfrak{sl}(\mathcal P) \ar[r] & 0
}
$$
and take restriction to $\mathcal A$. 
$$
\xymatrix{ 
0 \ar[r] & \mathfrak g^0|_{\mathcal A} \ar[r] & \mathfrak{sl}(\mathcal P) \ar[r] & 0
}
$$
Then $ \mathfrak g^0|_{\mathcal A} \subset \mathrm{End}(\mathcal A)$ is a Lie algebra bundle isomorphic to $ \mathfrak{sl}(\mathcal P)$, so it is a bundle with simple Lie algebra fibers.

We have that $\mathfrak g^0|_{\mathcal A}$ preserves $\mathcal P$. By the simplicity of $\mathfrak g^0|_{ \mathcal A} $ the invariant bundle $\mathcal P$ admits an invariant suplementary. Thus, there exist a rational $\mathfrak g^0$-invariant sub-bundle $\mathcal K \subset A$ such that $\mathcal A = \mathcal K \oplus \mathcal P$. On the other hand the image of $\mathfrak g^0|_{\mathcal A}$ is $\mathcal P$ so it follows that $\mathfrak g^0|_{\mathcal K} = 0$ and then $\mathcal K = \mathcal H$.
\end{proof}

\begin{remark}
The previous lemma has important direct consequences:
\begin{enumerate}
    \item $\mathcal A$ is integrable and $\rho$-projectable.
    \item $\rho_*\mathcal A = \rho_* \mathcal H = \mathcal F$.
    \item ${\rm rank}\,\mathcal H = {\rm rank}\,\mathcal F$. 
    \item $\mathcal H$ is a partial $\mathcal F$-connection (in the sense of \cite{malgrange2010pseudogroupes}).
\end{enumerate}
\end{remark}


\subsection{Final step of the proof} After defining $\mathcal H$ as the kernel of linear isotropy let us consider $\overline{\mathcal L}$ the $\mathcal D$-Lie algebra defined by equation \eqref{eq:km} in the conclusion of Theorem \ref{th:kiso-mori}.
$$
\overline{\mathcal L}  = \Big\{ \vec X\in J({\rm T}M/M) \mid d\rho(\vec X) =0, \,\, \mathrm{Lie}_{\vec X} \omega \equiv 0 \,\, \mathrm{mod} \,\, \rho^* \Omega^1_S,$$ 
\vspace{-4mm}
$$
 \quad\quad \quad\forall \vec Y\in \mathfrak X_{\mathcal H} \,\, [\vec X,\vec Y]\in \mathfrak X_{\mathcal H}
\Big\}.
$$
To complete the proof of Theorem \ref{th:kiso-mori} we have to prove $\mathcal L = \overline{\mathcal L}$. The inclusion $ \mathcal L \subseteq \overline{\mathcal L}$ is elementary, we already know that $\omega|_{\mathcal P}$ and   $\mathcal H$ are $\mathcal L$-invariants. Then, our objective is to prove $\overline{\mathcal L}\subseteq \mathcal L$. Recalling Definition \ref{df:gk}, we construct the symbols of $\mathcal L$ and $\overline{\mathcal L}$.
$$\overline{\mathfrak g}^k = \overline{\mathcal L}^{\geq k+1}/\overline{\mathcal L}^{\geq k+2},\quad
\overline{\mathfrak g} = \bigoplus_{k=0}^{\infty} \overline{\mathfrak g}^k.$$
We have $\mathfrak g^k \subseteq \overline{\mathfrak g}^k$ as a linear bundle and $\mathfrak g \subseteq \overline{\mathfrak g}$ as a linear graded bundle. Moreover, the homogeneous part of smaller degree of a Lie bracket depends only of the homogeneous part of smaller degree of the factors. Therefore the Lie bracket is defined as a graded operation,
$$
[\,\,\,,\,\,]\colon 
\overline{\mathfrak g}^k \times_M \overline{\mathfrak g}^\ell \to \overline{\mathfrak g}^{k+\ell}.
$$
Thus $\overline{\mathfrak g}$ is bundle by graded Lie algebras, and $\mathfrak g$ is a sub-bundle by graded Lie subalgebras of the former. 
Let us consider $\mathcal L = \{\mathcal L_k\}$ and 
$\overline{\mathcal L} = \{\overline{\mathcal L}_k\}$ as projective systems of finite rank vector bundles. Note that, by definition of $k$-jet we have:
$$\mathcal L_k = \mathcal L/ \mathcal L^{\geq k+1}, \quad 
\overline{\mathcal L}_k = \overline{\mathcal L}/ \overline{\mathcal L}^{\geq k+1}$$
We also have $\mathcal L_0 = \overline{\mathcal L}_0 = \mathcal P$ which is of rank $m$. It follows that for $k>0$
$${\rm rank}( \mathcal L_k ) = m + \sum_{j=0}^{k-1} {\rm rank}(\mathfrak g^k), \quad
\dim( \overline{\mathcal L}_k ) = m + \sum_{j=0}^{k-1} {\rm rank}(\overline{\mathfrak g}^k)$$
Thus, $\mathfrak g^k = \overline{\mathfrak g}^k$ for all $k$ if and only if 
$\mathcal L_k = \overline{\mathcal L}_k$ for all $k$ if and only if 
$\mathcal L_p = \overline{\mathcal L}_p$. Therefore, \emph{the proof of Kiso-Morimoto theorem is reduced to the proof of the equality of the symbols of $\mathcal L$ and $\overline{\mathcal L}$.}\\

{\bf Notation:} It suffices to prove $\overline{\mathfrak g}_p =  \mathfrak g_p$ for a regular point $p$ of the symbols. Then we fix such a point $p$; we assume that $p$ is not a singular point of $\mathcal H$ or $\omega|_{M_{\rho(p)}}$. From now, in order to simplify the writting, we will omit the subindex $p$ of the symbol, writting $\mathfrak g^k$ instead of $\mathfrak g^k_p$ and so on. We also fix the notation $\mathfrak h = \mathfrak{sl}(\mathcal P_p)\subset \mathfrak g^0$.\\

We consider around $p\in M$ a system of adapted analytic coordinates $s_1,$ $\ldots,$ $s_q,$ $t_1,$ $\ldots,$ $t_r,$ $x_1,\ldots, x_m$, vanishing at $p$ with the following properties. 
\begin{itemize}
    \item[(a)] $s_1,\ldots,s_q,t_1,\ldots,t_r$ form system of coordinates in $S$ around $\rho(p)$, that is, as local functions in $M$ they are  constant along the fibers of $\rho$.
    \item[(b)] The foliation $\mathcal F$ in $S$ rectified by the coordinates $t,s$, that is:
    $$
    \mathcal F = \big\{ds_1 = \ldots =ds_q = 0 \big\} = \left\langle \frac{\partial}{\partial t_{1}}, \ldots,\frac{\partial}{\partial t_{r}}\right\rangle. 
    $$
    \item[(c)] The functions $x_i$ are first integrals of $\mathcal H$, thus on $M$: 
    $$\mathcal H = \left\langle \frac{\partial}{\partial t_{1}}, \ldots,\frac{\partial}{\partial t_{r}}\right\rangle. $$
    \item[(d)] The volume form $\omega$ is written in canonical form with respect to the $x_i$ functions,
    $$
    \omega =  dx_1\wedge \cdots \wedge dx_m.
    $$
\end{itemize}

Let us consider $\vec X\in \overline{\mathcal L}_p$ be a formal vector field.
its local expression in such system of adapted coordinates has the form 
$$
\vec X= \sum_j f_j(x,s,t) \frac{\partial}{\partial x_j}.
$$
where $f_j\in \mathbb C[[x,s,t]]$. The vector field $X$ preserves $\mathcal H$ therefore for $j=1,\ldots,r$ we have that $\mathrm{Lie}_X \frac{\partial}{\partial t_{j}}$ is a vector field tangent to $\mathcal H$:
$$
\mathrm{Lie}_{\vec X} \frac{\partial}{\partial t_{j}} = -\Big[ \frac{\partial}{\partial t_{j}}, \vec X\Big]
= -\sum_{i=1}^m \frac{\partial f_i}{\partial t_{j}} \frac{\partial}{\partial x_i}
$$
Therefore, $\frac{\partial f_i}{\partial t_{j}} =0$ for all $j=1,\ldots,r$. We obtain that the expression of $X$ has the form,
\begin{equation}\label{localLbar}
\vec X = \sum_{i=1}^m f_i(x,s) \frac{\partial}{\partial x_i}.
\end{equation}
With $f_j(x,s)\in \mathbb C[[x,s]]$. We also know that $X$ preserves $\omega$. Finally we have that $X$ is in $\overline{\mathcal L}$ if and only if its local expression is of the form \eqref{localLbar}, satisfying
$$
\mathrm{div}_\omega X = \sum_{i=1}^m \frac{\partial f_i}{\partial x_i} = 0.
$$

Let $\mathbb C[x,s]_{k+1}$ the space of homogeneous polynomials of degree $(k+1)$ in the variables $x_1,\ldots,x_m,s_1,\ldots,s_q$. From the local expression of $X$ in $\overline{\mathcal L}$ around $p$ we have that the vector space $\overline{\mathfrak g}^k$ is identified with a subspace of $\bigoplus_{i=1}^m \mathbb C[x,s]_{k+1} \frac{\partial}{\partial x_i}$.\\

Let $P_{(k+1)}(x,s)$ be an homogeneous polynomial of degree $(k+1)$. It can be decomposed in a unique way as a sum,
$$
P_{k+1}(x,s) = P_{k+1}^{(0)}(x) + P_{k+1}^{(1)}(x,s) + \ldots  + P_{k+1}^{(k)}(x,s) + P_{k+1}^{(k+1)}(s),
$$
where $P_{k+1}^{(j)}$ is in $(\mathbb C[x]_{k+1-j})[s]_j$, this is, it is an homogeneous polynomial of degree $j$ in the variables $s_1,\ldots,s_q$ whose coefficients are homogeneous polynomials in $x_1,\ldots,x_m$ of degree $(k+1-j)$.\\

Given $X\in \overline{\mathfrak g}^k$ we can define the valuation $\mathrm{val}_s(X)$ as the minimum grade in $s$ appearing in $X$. It allows us to define a filtration in $\overline{\mathfrak g}^k$.

\begin{definition}
Let $( \overline{\mathfrak g^k})^{ \geq \ell}$ be the set of elements of $ \mathfrak g^k$ with valuations in $s$ bigger than $\ell$,
$$ 
(\overline{\mathfrak g}^k)^{\geq \ell} = \{ X\in \overline{\mathfrak g}^k \mid \mathrm{val}_s(X) \geq \ell \}.
$$
so that we have a filtration,
$$
(\overline{\mathfrak g}^k)^{\geq k+2} = \{0\} 
\subset (\overline{\mathfrak g}^k)^{\geq k+1} 
\subset 
\ldots
\subset 
(\overline{\mathfrak g}^k)^{\geq 0} = \overline{\mathfrak g}^k.
$$
Let $\overline{\mathfrak g}^{k,\ell}$ be the
consecutive quotient $(\overline{\mathfrak g}^{k})^{ \geq \ell}/(\overline{\mathfrak g}^{k})^{\geq \ell+1}$. 
\end{definition}

By restriction of this filtration to the subspace
$\mathfrak g^k$ we define the consecutive quotients $\mathfrak g^{k,\ell}$. By construction $\mathfrak g^{k,\ell} \subseteq \overline{\mathfrak g}^{k,\ell}$. Since we have a finite filtration, we have that $\mathfrak g^{k} = \overline{\mathfrak g}^{k}$ if and only if $\mathfrak g^{k,\ell} = \overline{\mathfrak g}^{k,\ell}$ for all $\ell=0,\ldots,k+1$. Thus, \emph{the proof of Kiso-Morimoto theorem is reduced to the comparison of the spaces $ \mathfrak g^{k,\ell}$ to $ \overline{ \mathfrak g}^{k,\ell}$.}\\

From the fundamental hypothesis on $\mathcal L$ we have the following.

\begin{lemma}\label{g01=OL{g01}}
We have $ \mathfrak g^{0,1} = \overline{ \mathfrak g}^{0,1} = \mathfrak a$.
\end{lemma}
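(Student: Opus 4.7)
The plan is to evaluate both $\overline{\mathfrak g}^{0,1}$ and $\mathfrak g^{0,1}$ directly in the adapted coordinates constructed just above the lemma, and match them against Morimoto's description of $\mathfrak a$. The key input is the local description \eqref{localLbar} of $\overline{\mathcal L}$: every element is a formal vector field $\vec X = \sum_i f_i(x,s)\,\partial/\partial x_i$ with $\sum_i \partial f_i/\partial x_i = 0$, the $\mathcal H$-invariance having already killed any $t$-dependence. Taking linear parts at $p$, the space $\overline{\mathfrak g}^0$ is cut out inside $\bigoplus_i \mathbb C[x,s]_1\,\partial/\partial x_i$ by the single trace condition $\sum_i a_{ii}=0$ on the $x$-linear coefficients, and of course $\mathfrak g^0 \subseteq \overline{\mathfrak g}^0$.

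Since every element of $\overline{\mathfrak g}^0$ is homogeneous of degree one in $(x,s)$, its $s$-valuation is at most one; hence $(\overline{\mathfrak g}^0)^{\geq 2} = 0$ and $\overline{\mathfrak g}^{0,1} = (\overline{\mathfrak g}^0)^{\geq 1}$, and analogously after intersecting with $\mathfrak g^0$. The subspace $(\overline{\mathfrak g}^0)^{\geq 1}$ is obtained by killing the $x$-linear part, so it consists of all linear fields of the form $\sum_{i,k} b_{ik} s_k\,\partial/\partial x_i$, for which the trace condition is automatic.

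Next, viewed as linear maps ${\rm T}_p M \to \mathcal P_p$, these fields factor through $\mathcal A^\perp_p$: by construction of the adapted coordinates $\mathcal A = \mathcal P \oplus \mathcal H = \langle \partial/\partial x_i,\,\partial/\partial t_j\rangle$, so $\mathcal A^\perp_p = \langle ds_1|_p,\ldots,ds_q|_p\rangle$, and under the Kuranishi identification $\mathfrak m_p/\mathfrak m_p^2 \simeq {\rm T}_p^* M$ the field $s_k\,\partial/\partial x_i$ corresponds to $ds_k|_p \otimes \partial/\partial x_i|_p$. Therefore $(\overline{\mathfrak g}^0)^{\geq 1} = \mathcal A^\perp_p \otimes \mathcal P_p$, and by Lemma \ref{lm:morimoto} this is exactly $\mathfrak a$, giving $\overline{\mathfrak g}^{0,1} = \mathfrak a$.

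For the remaining equality, the inclusion $\mathfrak a \subseteq (\mathfrak g^0)^{\geq 1}$ holds because $\mathfrak a = \mathfrak g^0 \cap \mathfrak b$ already sits in $\mathfrak g^0$ and each of its elements has $s$-valuation $\geq 1$ by the coordinate description; the reverse $(\mathfrak g^0)^{\geq 1} \subseteq (\overline{\mathfrak g}^0)^{\geq 1} = \mathfrak a$ is tautological. Hence $\mathfrak g^{0,1} = \mathfrak a$ as well. No step is substantively hard here; the real work was done in setting up the adapted coordinates and in Morimoto's Lemma, whose contribution is to guarantee that the \emph{full} space $\mathcal A^\perp \otimes \mathcal P$ is realized inside $\mathfrak g^0$ rather than only a proper subspace, which is what forces the two inclusion chains above to collapse.
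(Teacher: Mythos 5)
Your proof is correct and follows essentially the same route as the paper: identify $(\overline{\mathfrak g}^0)^{\geq 1}$ in the adapted coordinates as the purely $s$-linear fields, i.e.\ $\mathcal A_p^\perp\otimes\mathcal P_p$, and match this with $\mathfrak a$ via Lemma \ref{lm:morimoto} and the decomposition $\mathcal A=\mathcal P\oplus\mathcal H$. In fact your write-up is more complete than the paper's two-line proof, which only makes explicit the inclusion $\mathfrak g^{0,1}\subseteq\mathfrak a$ (restriction to $s=0$ vanishes) and leaves the equality $\overline{\mathfrak g}^{0,1}=\mathfrak a$ --- the step that genuinely needs Morimoto's Lemma --- implicit.
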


\begin{proof}
For $v^1\in \mathfrak g^{0,1}\subset \mathfrak g^0$ then its restriction to $s=0$ is 0. This means that $v^1\in \mathfrak a$.
\end{proof}

Let us denote by $\mathfrak h^k \subset \bigoplus_{i=1}^m \mathbb C[x]_{k+1} \frac{\partial}{\partial x_i}$ the Lie algebra of divergence free vector fields not depending of the variables $s$. Note that $\mathfrak h^0 = \mathfrak{sl}(\mathcal P_p)\subset \mathfrak g^0$.

\begin{lemma}\label{gk0= OL{gkl}}
$\mathfrak g^{k,0} = \overline{ \mathfrak g}^{k,0} \cong \mathfrak h^k$. 
\end{lemma}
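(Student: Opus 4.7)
The plan is to identify both $\mathfrak{g}^{k,0}$ and $\overline{\mathfrak{g}}^{k,0}$ with the same concrete space $\mathfrak{h}^k$ of divergence-free polynomial vector fields in the $x$-variables. The identification of $\overline{\mathfrak{g}}^{k,0}$ will be essentially tautological from the local normal form (\ref{localLbar}). The identification of $\mathfrak{g}^{k,0}$ will require hypothesis (Hyp2), used through the surjective restriction of $\mathcal{L}_p$ to the space of divergence-free formal vector fields at $p$ on the fiber $M_{\rho(p)}$.

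I would first work in the adapted analytic coordinates $(x,s,t)$ and read off $\overline{\mathfrak{g}}^{k,0}$ directly. An element of $(\overline{\mathfrak{g}}^k)^{\geq 0}$ is the degree $(k+1)$ homogeneous component, in the $(x,s)$-variables, of a vector field $\sum f_i(x,s)\partial_{x_i}$ with $\sum \partial_{x_i}f_i = 0$. Quotienting by $(\overline{\mathfrak{g}}^k)^{\geq 1}$ discards every monomial involving some $s_j$, leaving $\sum f_i(x)\partial_{x_i}$ with $f_i\in \mathbb C[x]_{k+1}$ and $\sum \partial_{x_i}f_i = 0$, i.e.\ precisely $\mathfrak{h}^k$. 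This yields the canonical isomorphism $\overline{\mathfrak{g}}^{k,0}\cong \mathfrak{h}^k$.

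Next I would show that the inclusion $\mathfrak{g}^{k,0}\hookrightarrow \overline{\mathfrak{g}}^{k,0}\cong \mathfrak{h}^k$ is surjective. In the chosen coordinates, the fiber $M_{\rho(p)}$ is cut out by $s=0$, so restriction of a formal vector field at $p$ to this fiber is the operation of setting $s=0$. Consequently, for $\vec X\in \mathcal L_p^{\geq k+1}$, the image of its class in $\mathfrak{g}^{k,0}\subset \mathfrak{h}^k$ is exactly the degree-$(k+1)$ part of $\vec X|_{M_{\rho(p)}}$. By the very definition of $\mathcal L|_{M_{\rho(p)}}$, the restriction map from $\mathcal L_p$ to $(\mathcal L|_{M_{\rho(p)}})_p$ is surjective; by (Hyp2), the latter is the space of all divergence-free formal vector fields at $p$ on $M_{\rho(p)}$. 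Taking the order-$(k+1)$ homogeneous part is then surjective onto $\mathfrak{h}^k$, so the composition $\mathcal L_p^{\geq k+1}\to \mathfrak{g}^{k,0}\hookrightarrow \mathfrak{h}^k$ is surjective, which forces $\mathfrak{g}^{k,0} = \overline{\mathfrak{g}}^{k,0} \cong \mathfrak{h}^k$.

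I do not anticipate a genuine obstacle here: once the $s$-grading on $\overline{\mathfrak{g}}^k$ is recognised as recording the transverse Taylor coefficients in $s$, with the quotient $(\overline{\mathfrak g}^k)^{\geq 0}/(\overline{\mathfrak g}^k)^{\geq 1}$ being exactly the fiberwise symbol, the lemma reduces to the surjectivity of restriction to the fiber, which is built into the definition of $\mathcal L|_{M_{\rho(p)}}$, combined with (Hyp2). The only delicate point to check is that restriction preserves the order of vanishing at $p$, but this is immediate from the adapted coordinates since $p\in M_{\rho(p)}$ and the vanishing is formulated at $p$ itself.
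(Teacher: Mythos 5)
Your overall strategy coincides with the paper's: identify $\overline{\mathfrak g}^{k,0}$ with $\mathfrak h^k$ by setting $s=0$ in the normal form \eqref{localLbar}, and then use (Hyp2) to show that the inclusion $\mathfrak g^{k,0}\hookrightarrow\overline{\mathfrak g}^{k,0}\cong\mathfrak h^k$ is onto. The first half of your argument (the identification $\overline{\mathfrak g}^{k,0}\cong\mathfrak h^k$) is correct, modulo the routine check that each element of $\mathfrak h^k$ actually belongs to $\overline{\mathcal L}_p$ so that the map is surjective and not merely injective.

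The gap is in the last step. From the surjectivity of the restriction $\mathcal L_p\to(\mathcal L|_{M_{\rho(p)}})_p$ you conclude that the composite $\mathcal L_p^{\geq k+1}\to\mathfrak g^{k,0}\hookrightarrow\mathfrak h^k$ is surjective. But a class in $\mathfrak g^{k,0}$ is represented only by elements of $\mathcal L_p$ vanishing to order $k+1$ at $p$, whereas (Hyp2) merely provides, for a given $w\in\mathfrak h^k$, \emph{some} $\vec X\in\mathcal L_p$ with $\vec X|_{M_{\rho(p)}}=w$; nothing forces that lift to lie in $\mathcal L_p^{\geq k+1}$. In the adapted coordinates one has $\vec X=\sum_i\bigl(w_i(x)+g_i(x,s)\bigr)\frac{\partial}{\partial x_i}$ with $g_i$ in the ideal $(s_1,\ldots,s_q)$, and the $g_i$ may contain terms of total degree $\le k$, so $\vec X\notin\mathcal L_p^{\geq k+1}$. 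The ``delicate point'' you flag and dismiss --- that restriction preserves the order of vanishing --- is the converse of the one that matters: it shows that $\mathcal L_p^{\geq k+1}$ maps \emph{into} the order-$(k+1)$ part of the divergence-free algebra on the fiber, not \emph{onto} it. What is actually needed is strict compatibility of the restriction with the vanishing-order filtration, and this does not follow from surjectivity of a filtered map in general: the line spanned by $x^{10}+s$ in $\mathbb C[[x,s]]$ surjects onto $\mathbb C\,x^{10}$ under $s\mapsto 0$, yet contains no nonzero element vanishing to order $2$. To be fair, the paper's own proof asserts $\mathfrak h^k\subset\mathfrak g^k$ with comparable brevity; but your proposed justification does not close this step, and closing it (producing, for each $w\in\mathfrak h^k$, an element of $\mathcal L_p$ with no terms of degree $\le k$ whose leading part is $w$ modulo $s$) is the real content of the lemma.
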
 

\begin{proof}
By (Hyp2) in Theorem \ref{th:kiso-mori} we have $\mathfrak h^k\subset\mathfrak g^k$ for all $k$. Taking the quotient is equivalent to take $s=0$. If follows easily that $\mathfrak h^k$ is a suplementary of $(\mathfrak g^k)^{\geq 1}$ in $\mathfrak g^k$ and of $(\overline{\mathfrak g}^k)^{\geq 1}$ in $\overline{\mathfrak g}^k$.
\end{proof}

Next step is to propagate the equalities by means of the Lie bracket. 

\begin{lemma}
The Lie bracket in $\overline{\mathfrak g}$ is compatible 
with the filtration of the spaces $\overline{\mathfrak g}^k$, and therefore it induces a Lie bracket between the intermediate quotients,
$$
[\,\,\,,\,\,]\colon 
\overline{\mathfrak g}^{k,\ell} \times \overline{\mathfrak g}^{k',\ell'} \to \overline{\mathfrak g}^{k+k',\ell+\ell'}.
$$
\end{lemma}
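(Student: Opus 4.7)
The proof will be a direct computation in the adapted local coordinates $(x_1,\ldots,x_m,s_1,\ldots,s_q,t_1,\ldots,t_r)$ around $p$ that were fixed in the previous steps. The only structural input needed is the normal form established at equation \eqref{localLbar}: every $\vec X \in \overline{\mathcal L}_p$ has a local expression
\[ \vec X = \sum_{i=1}^m f_i(x,s)\,\frac{\partial}{\partial x_i}, \]
involving no $t$-dependence and only partial derivatives with respect to the $x$-variables.

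The plan is to take two such vector fields $\vec X = \sum_i f_i(x,s)\,\partial/\partial x_i$ and $\vec Y = \sum_j g_j(x,s)\,\partial/\partial x_j$ and expand the bracket:
\[ [\vec X, \vec Y] = \sum_{i,j}\left( f_i \frac{\partial g_j}{\partial x_i} - g_i \frac{\partial f_j}{\partial x_i}\right)\frac{\partial}{\partial x_j}. \]
The crucial observation is that every derivative appearing on the right is with respect to an $x$-variable; no $\partial/\partial s$ ever occurs. Consequently both the order of vanishing at $p$ and the $s$-valuation add up along the bracket. Precisely: if $f_i$ vanishes at $p$ to order at least $k+1$ and $g_j$ to order at least $k'+1$, then each coefficient in $[\vec X,\vec Y]$ vanishes at $p$ to order at least $k+k'+1$; and if in addition $\mathrm{val}_s(f_i) \geq \ell$ and $\mathrm{val}_s(g_j) \geq \ell'$ for all $i,j$, then $\mathrm{val}_s\!\left(f_i \,\partial g_j / \partial x_i\right) \geq \ell + \ell'$, with the symmetric term handled identically.

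These two inequalities together yield the inclusion $[(\overline{\mathfrak g}^k)^{\geq \ell}, (\overline{\mathfrak g}^{k'})^{\geq \ell'}] \subseteq (\overline{\mathfrak g}^{k+k'})^{\geq \ell+\ell'}$, after which the lemma follows from the standard principle that a Lie algebra equipped with a descending filtration satisfying $[F^a,F^b] \subseteq F^{a+b}$ induces a graded Lie bracket on the associated graded, with the Jacobi identity inherited from the ambient bracket. I do not anticipate any real obstacle in executing this plan: the content is a purely formal consequence of the normal form \eqref{localLbar}, together with the observation that the Lie bracket of vector fields only differentiates using the vector fields themselves, which in our situation point exclusively in the $x$-directions — so the $s$-coefficients are merely multiplied together and never differentiated.
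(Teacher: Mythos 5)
Your proposal is correct and takes essentially the same route as the paper: both arguments work with representatives in the adapted coordinates, use the normal form \eqref{localLbar} to see that the bracket only involves $x$-derivatives, and conclude that the vanishing order at $p$ and the $s$-valuation are both additive under the bracket. Your formulation via valuations (no $\partial/\partial s$ ever occurs, so $s$-degrees simply multiply) is a slightly cleaner packaging of the paper's explicit leading-term-plus-remainder bookkeeping, but the content is identical.
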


\begin{proof}
Let's take $X\in \overline{\mathfrak g}^{k,\ell}$ and $Y\in \overline{\mathfrak g}^{k',\ell'}$. We write here $ \partial_i$ for $\partial/\partial x_i$.  We have the Taylor development $X = \sum P_i \partial_i + R_i \partial_i$ where $P_i$ is an homogeneous polynomial in $x$ and $s$, with grades $k+1-\ell$ and $\ell$ respectively. The $R_i$ have degree $\ell+1$ in $s$ and $k+2$ in $x$. Similarly write $Y = \sum P'_i \partial_i + R'_i \partial_i$ where $P'_i$ is an homogeneous polynomial in $x$ and $s$, with grades $k'+1'-\ell$ and $\ell'$ respectively. The $R'_i$ have degree $\ell'+1$ in $s$ and $k'+2$ in $x$. The bracket can be expressed as
$$
[X,Y] =\sum_{i,j}\big(P_i \partial_iq_j - Q_i \partial_iP_j\big) \partial_i + \big( R_i \partial_i Q_j - R_i' \partial_i P_j + (P_i+R_i) \partial_i R_j' - (Q_i + R_i') \partial_iR_j \big) \partial_i
$$
Observe that:
\begin{enumerate}
    \item $P_i \partial_iq_j - Q_i \partial_iP_j$ \quad have degree $\geq \ell+\ell'$ in $s$ and $k+k'+1$ in $x$,
    \item $R_i \partial_i Q_j - R_i' \partial_i R_j$ \quad have degree $\geq \ell+\ell'+1$  in $s$,
    \item $(P_i+R_i) \partial_i R_j' - (Q_i + R_i') \partial_iR_j$ \quad have degree $\geq \ell+\ell'+1$  in $s$.
\end{enumerate}
It follows that the bracket is an element in $\overline{\mathfrak g}^{k+k', \ell+\ell'}$.
\end{proof}

$$
\xymatrix{ 
\vdots & & & \vdots & & \vdots &  &
\\
\mathfrak g^2 & \overline{\mathfrak g}^{2,0}
=\mathfrak g^{2,0} 
\ar@/^{5mm}/[rr]^{[\mathfrak a,\,\,\,]}
& \oplus & 
\mathfrak g^{2,1}  \ar@/^{5mm}/[rr]^{[\mathfrak a,\,\,\,]}
& \oplus &
\mathfrak g^{2,2} \ar@/^{5mm}/[rr]^{[\mathfrak a,\,\,\,]}
& \oplus & \mathfrak g^{2,3}
\\
\mathfrak g^1 & \overline{\mathfrak g}^{1,0} =\mathfrak g^{1,0} \ar@/^{5mm}/[rr]^{[\mathfrak a,\,\,\,]}
& \oplus & 
\mathfrak g^{1,1}  \ar@/^{5mm}/[rr]^{[\mathfrak a,\,\,\,]}
& \oplus &
\mathfrak g^{1,2} 
\\
\mathfrak g^0 & \overline{\mathfrak g}^{0,0} =\mathfrak g^{0,0} 
 \ar@/^{5mm}/[rr]^{[\mathfrak a,\,\,\,]}
& \oplus & 
\mathfrak g^{0,1}  = \mathfrak a
}
$$

Observe that the first column of the bi-graded diagram we have equalities. Assume that the Lie bracket,
$$
[\,\,\, ,\,\,]\colon \mathfrak a\times \overline{\mathfrak g}^{k,\ell}\to \overline{\mathfrak g}^{k,\ell+1}
$$
is surjective for all $k$ and $\ell\leq k$. Then, by finite induction on $\ell$ for all $k$, following the horizontal lines of the above diagram we have
${\mathfrak g}^{k,\ell} = \overline{\mathfrak g}^{k,\ell}$ for all $k$ and $\ell$. 
In order to proof the surjectiveness of the Lie bracket we need the following technical result.

\begin{lemma}\label{lm:lemma_de_Guy}
For any $X\in \mathfrak h^k$ there is $Y\in \mathfrak h^{k+1}$ such that $[\frac{\partial}{\partial x_1}, Y ]= X$. 
\end{lemma}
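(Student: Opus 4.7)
The plan is to construct $Y$ explicitly in two stages: first produce some $Y_0$ satisfying $[\partial/\partial x_1,Y_0]=X$ without worrying about divergence-freeness, and then correct $Y_0$ by an element lying in the kernel of $[\partial/\partial x_1,\cdot\,]$ in order to cancel the residual divergence. Writing $X = \sum_{i=1}^m P_i(x)\,\partial/\partial x_i$ with each $P_i$ homogeneous of degree $k+1$ and $\sum_i \partial P_i/\partial x_i = 0$, I will first set
$$
Q_i(x) := \int_0^{x_1} P_i(t,x_2,\ldots,x_m)\,dt, \qquad Y_0 := \sum_{i=1}^m Q_i\,\partial/\partial x_i.
$$
Each $Q_i$ is a polynomial homogeneous of degree $k+2$, and $[\partial/\partial x_1,Y_0] = \sum_i P_i\,\partial/\partial x_i = X$ by construction.

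Next I will compute $\mathrm{div}(Y_0)$. Splitting off the $i=1$ term from the others, passing $\partial/\partial x_i$ through the integral for $i\geq 2$, and using the divergence-freeness of $X$ to replace $\sum_{i\geq 2}\partial P_i/\partial x_i$ by $-\partial P_1/\partial x_1$, the expression collapses via the fundamental theorem of calculus to
$$
\mathrm{div}(Y_0) \;=\; P_1(0,x_2,\ldots,x_m) \;=:\; R(x_2,\ldots,x_m),
$$
a polynomial homogeneous of degree $k+1$ and independent of $x_1$.

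To absorb this residue, I will add
$$
Y_1 := -\Big(\int_0^{x_2} R(t,x_3,\ldots,x_m)\,dt\Big)\,\partial/\partial x_2,
$$
whose single component is a polynomial of degree $k+2$ not involving $x_1$, so that $[\partial/\partial x_1,Y_1]=0$ and $\mathrm{div}(Y_1)=-R$. Then $Y := Y_0 + Y_1$ lies in $\mathfrak h^{k+1}$ and solves $[\partial/\partial x_1,Y] = X$. This is essentially a Poincaré-type lemma for the Koszul complex of the divergence operator, so no real obstacle arises; the only substantive point is that the collapse of $\mathrm{div}(Y_0)$ to something independent of $x_1$ is forced by $X$ being divergence-free, which is precisely what allows the correction $Y_1$ to be built using only the variables $x_2,\ldots,x_m$.
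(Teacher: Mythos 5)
Your proof is correct and follows essentially the same two-stage strategy as the paper: choose an antiderivative of $X$ in $x_1$, observe that the resulting divergence $R$ is forced by $\mathrm{div}\,X=0$ to be independent of $x_1$, and then cancel it by a field commuting with $\partial/\partial x_1$. The only cosmetic difference is the correction term: the paper uses the multiple $\frac{1}{k+m}R\,E$ of the Euler field $E=\sum_{j\ge 2}x_j\,\partial/\partial x_j$, exploiting homogeneity, whereas you integrate $R$ with respect to $x_2$; both choices are valid.
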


\begin{proof}
We have $X = \sum P_j(x)\frac{\partial}{\partial x_j}$ where the polnomials $P_j(x)$ are homogeneous of degree $k+1$. Let us consider the homogeneous polynomials $Q_j$ verifying,
$$\frac{\partial Q_i}{\partial x_1}(x) = P_j(x).$$
Let us set,
$$Y_0 = \sum_{j} Q_j(x)\frac{\partial}{\partial x_j}, \quad
E = \sum_{j\geq 2} x_j\frac{\partial}{\partial x_j},
\quad R(x) = {\rm div}_\omega(Y_0).$$
Note that
$$\frac{\partial}{\partial x_1} R(x) =
\sum_j \frac{\partial Q_j}{\partial x_j\partial x_1} =
\sum_j \frac{\partial P_j}{\partial x_j} = {\rm div}_{\omega}(X) = 0,$$
and therefore $R(x)$ is an homogeneous polynomial in $x_2,\ldots,x_m$ of degree $k+1$. In particular we have 
$$E\,R(x) = (k+1)R(x), \quad {\rm div}_{\omega}(R(x)E) = (k+m)R(x).$$
We conveniently take $$Y = Y_0 - \frac{1}{k+m} R(x) E$$ 
which satisties,
$$\left[\frac{\partial}{\partial x_1}, Y\right] = X, \quad
{\rm div}_{\omega}(Y) = R(x) - \frac{k+m}{k+m} R(x) = 0.$$
and thus $Y$ is the required element of $\mathfrak h^{k+1}$.
\end{proof}

Finally, by the above argument, the proof of Kiso-Morimoto theorem is reduced to the following fundamental Lemma.

\begin{lemma}\label{BracketSurjectiveLemma}
 Lie bracket,
$$
[\,\,\, ,\,\,]\colon \mathfrak a\times \overline{\mathfrak g}^{k,\ell}\to \overline{\mathfrak g}^{k,\ell+1}
$$
is surjective for all $k$ and $\ell\leq k$.
\end{lemma}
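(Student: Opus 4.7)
The plan is to prove surjectivity directly by decomposing an element $Z\in\overline{\mathfrak g}^{k,\ell+1}$ monomial by monomial in the $s$-variables and inverting each piece using Lemma \ref{lm:lemma_de_Guy}. The main ingredients are: (i) the coordinate description of $\overline{\mathfrak g}^{k,\ell}$ as bi-homogeneous polynomial vector fields of $x$-degree $k+1-\ell$ and $s$-degree $\ell$ with vanishing $x$-divergence; and (ii) the fact (already established, via Lemma \ref{lm:morimoto} combined with the splitting $\mathcal A=\mathcal P\oplus\mathcal H$ and $\mathcal A^\perp=\langle ds_1,\ldots,ds_q\rangle$) that $\mathfrak a$ is spanned by the elements $s_\alpha\,\frac{\partial}{\partial x_j}$ for $1\leq\alpha\leq q$, $1\leq j\leq m$.

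First I would record the elementary bracket computation that drives everything: since $s_\alpha$ is independent of the $x$'s and any $Y(x)\in\mathfrak h^{k-\ell}$ is independent of the $s$'s,
$$
\bigl[\,s_\alpha\,\tfrac{\partial}{\partial x_j},\; s^J\,Y(x)\,\bigr] \;=\; s_\alpha s^J\,\bigl[\,\tfrac{\partial}{\partial x_j},\, Y(x)\,\bigr]
$$
for any multi-index $J$ in the $s$-variables. Thus bracketing by elements of $\mathfrak a$ acts as partial differentiation in the $x$-variables on the polynomial vector-field factor while simultaneously raising the $s$-degree by one.

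Next, given $Z\in\overline{\mathfrak g}^{k,\ell+1}$, I would decompose it by $s$-monomials as $Z=\sum_{|I|=\ell+1} s^I\,Z_I(x)$, where each $Z_I(x)$ is a polynomial vector field in $x$ of degree $k-\ell$. Since $\operatorname{div}_\omega$ in $x$ commutes with multiplication by $s^I$, the condition $\operatorname{div}_\omega Z=0$ forces $\operatorname{div}_\omega Z_I=0$ for every $I$, so $Z_I\in\mathfrak h^{k-\ell-1}$. This is precisely where the assumption $\ell\leq k$ enters: it ensures $x$-degree $k-\ell\geq 0$, and the boundary case $\ell=k$ (where $\mathfrak h^{-1}$ is the space of constant vector fields) is covered by the formula in Lemma \ref{lm:lemma_de_Guy}, whose denominator $k+m$ specializes to $m-1$ and is nonzero as $m\geq 2$.

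For each $I$ I would pick some $\alpha(I)$ with $I_{\alpha(I)}\geq 1$ (possible since $|I|\geq 1$) and apply Lemma \ref{lm:lemma_de_Guy} to produce $Y_I\in\mathfrak h^{k-\ell}$ with $[\tfrac{\partial}{\partial x_1},Y_I]=Z_I$. Setting $\widetilde Y_I := s^{I-e_{\alpha(I)}}\,Y_I(x)$, one checks that $\widetilde Y_I\in\overline{\mathfrak g}^{k,\ell}$ (its $x$-divergence is $s^{I-e_{\alpha(I)}}\cdot\operatorname{div}_\omega Y_I=0$), and the bracket identity above gives $\bigl[s_{\alpha(I)}\tfrac{\partial}{\partial x_1},\widetilde Y_I\bigr]=s^I\,Z_I$. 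Summing over $I$ yields $Z=\sum_I\bigl[s_{\alpha(I)}\tfrac{\partial}{\partial x_1},\widetilde Y_I\bigr]\in [\mathfrak a,\overline{\mathfrak g}^{k,\ell}]$, which is the desired surjectivity. The only substantive input is Lemma \ref{lm:lemma_de_Guy}, which has already been proved; everything else is multi-index bookkeeping, so I do not anticipate any genuine obstacle.
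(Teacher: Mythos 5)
Your proof is correct and follows essentially the same route as the paper: factor one $s$-variable out of each term of the target element, integrate the $x$-part against $\partial/\partial x_1$ inside the divergence-free algebra via Lemma \ref{lm:lemma_de_Guy}, and observe that the cross terms in the bracket vanish because the $s$-factors are killed by $x$-vector fields and vice versa. The only (cosmetic) difference is that you work monomial by monomial and prove surjectivity directly, whereas the paper phrases the same computation on spanning elements $QPX$ inside its induction on $\ell$; your degree bookkeeping is in fact the more careful of the two.
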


\begin{proof}
We proceed by fixing $k$ and doing induction on $\ell$. The initial steps are given by lemmas \ref{g01=OL{g01}} and \ref{gk0= OL{gkl}}. Assume that the equality holds for $\ell$ and show for $\ell+1$ \emph{i.e.} let us see that  $\mathfrak g^{k,\ell+1} =\overline{\mathfrak g}^{k,\ell+1}$. Let $Q\in \mathrm{sym}^1 (s)$, $P\in \mathrm{sym}^\ell(s)$ and $X\in \mathfrak h^{k-l}$ such that $QPX\in \overline{\mathfrak g}^{k,\ell+1}$. By the lemma \ref{lm:lemma_de_Guy}, there exists $Y\in \mathfrak h^{k-\ell+1}$ such that $[\frac{\partial}{\partial x_1}, Y] = X$.  We have $Q\otimes \frac{\partial}{\partial x_1} \in \mathfrak g^{0,1}$ and $P\otimes Y \in \mathfrak g^{k,\ell}$. Therefore we get the following expression:
$$
[Q \otimes \frac{\partial}{\partial x_1}, P\otimes Y] = QPX
$$
because $Y(Q)= \frac{\partial}{\partial x_1}(P) =0$. As the elements of the form $QPX$ form a basis for $ \overline{\mathfrak g}^{k,\ell+1}$ we get the lemma. 
\end{proof}

Finally we have $\mathfrak g = \overline{\mathfrak g}$ and therefore $\mathcal L = \overline{\mathcal L}$; this finishes the proof of Kiso-Morimoto Theorem \ref{th:kiso-mori}.


\section{On the definition of $\mathcal D$-groupoid}\label{App:A}

In this appendix we show the equivalence between Malgrange's definition of $\mathcal D$-groupoid (Definition 5.2 in \cite{malgrange2010pseudogroupes}) and ours (Definition \ref{def:D_groupoid}). A complete reference with different proofs is D. Davy's PhD thesis \cite{Damien2016specialisation}. In order to revisit the definition we need to examine the differential structure of the jet bundle.\\

First, let us consider $P\to M$ a bundle with $P$ and $M$ affine and smooth varieties.
Whenever we need we replace $M$ by a suitable Zariski dense open subset. Therefore,
we may assume that we have functionally independent functions $x_1,\ldots, x_m$ such that the map $M\to \mathbb C^m$ is an open cover of an affine subset of $\mathbb C^m$ and thus the functions $x_1,\ldots, x_m$ form a local system of coordinates in a neighbourhood (in the usual topology) of any point. Let us set the notation $\vec D_i = \frac{\partial}{\partial x_i}$ for partial derivative operators.\\

For each order $k$ we consider the $k$-jet bundle of sections $J_k= J_k(P/M)$. We have submersions,
$$J = \lim_{\leftarrow} J_\ell  \to \ldots \to J_k \to \ldots \to J_1 \to P \to M$$
and a local mechanism of $k$-jet prolongation of analytic sections,
$$j^k\colon \Gamma(U,P) \to \Gamma(U,J_k),\quad (j^k u)(p) = j^k_pu.$$
The rings of regular functions (that give algebraic structure to the sets $J_k$ ) are defined inductively in the following way. A regular vector field $\vec X$ in $M$ extends to a total derivative operator,
$$\vec X^{\rm tot}\colon \mathcal O_{J_k}\to \mathcal O_{J_{k+1}}$$
where $(\vec X^{\rm tot}f)(j^{k+1}_pu) = \vec X_p((j^ku)^*f)$, starting at $\mathcal O_{J_0} = \mathcal O_P$. Then we take:
$$\mathcal O_{J_1} = \mathcal O_P[\vec D_j^{\rm tot}\mathcal O_P: j = 1,\dots m]$$
$$\mathcal O_{J_2} = \mathcal O_P[\vec D_j^{\rm tot}\mathcal O_{J_1}: j = 1,\dots m]$$
$$\ldots$$
$$\mathcal O_{J_k} = \mathcal O_P[\vec D_j^{\rm tot}\mathcal O_{J_{k-1}}: j = 1,\dots m]$$
Then we have that 
$$\mathcal O_J = \bigcup_{k=0}^\infty \mathcal O_{J_k}$$
is a $\vec D$-ring with $\vec D = \{\vec D_1^{\rm tot},\ldots,\vec D_m^{\rm tot}\}$. This realizes the bundles $J_k$ as affine schemes. A system of PDE's is, by definition, the set of zeroes $Z \subset J$ of a radical $\vec D$-ideal $\mathcal I \subset \mathcal O_{J}$. The zero set $Z_k\subset J_k$ of intersection $\mathcal I_r = \mathcal I \cap \mathcal O_{J_r}$ consists of the equations of $Z$ of order $\leq r$. We identify $Z$ with the sequence $Z = \{Z_k\}_{k\in \mathbb N}$. The following are, not so elementary, but well known facts.
\begin{itemize}
    \item[(a)] There is a minimum $r$ such that $\mathcal I_r$ spans $\mathcal I$ as a radical $\vec D$-ideal, this $r$ is the so called order of $Z$ (Ritt-Raudenbush)
    \item[(b)] If $\mathcal I$ is not trivial and contains no equations of order $0$ then there is a Zariski open subset $U$ such that for $k>1$ the projections $Z_k|_U\to Z_{k-1}|_U$ are submersions, and futhermore, there is a $k_0\in \mathbb N$ such that for $k>k_0$ they are affine bundles (Generic involutivity).
    \item[(c)] $Z$ is characterized by its local analytic solutions. By a local analytic solution we mean a local analytic section $u\colon U\to P$ such that for all $p\in U$, $j_pu\in Z$ (Differential Nullstellensatz).
\end{itemize}

Now we consider $P = M\times M  \to M$ with the projection in the first factor. For each $k$ define $J^*_k$ the set of $k$-jets of graphs of local biholomorphisms. This is an affine open subset of $J_k$ complementary of the zero locus of the Jacobian (which is an hypersurface defined by an equation of order $1$). We have, again submersion and ring inclusions,
$$J^* = \lim_{\leftarrow} J^*_\ell  \to \ldots \to J^*_k \to \ldots \to J_1 \to P \to M$$
$$\mathcal O_M \subset \mathcal O_P \subset \mathcal O_{J_1^*}\subset \ldots \subset \,\,\mathcal O_{J^*} = \bigcup_{k} \mathcal O_{J_k}.$$

Since the elements of $J^*$ are formal biholomorphisms it follows that $J^*\to M\times M$ is a groupoid, an alternative construction for ${\rm Aut}(M)$. Now we can give a definition of $\mathcal D$-groupoid which is clearly equivalent to Definition 5.2 in \cite{malgrange2010pseudogroupes}.

\begin{definition}\label{df:D_grupoidM}
We say that a closed subset $\{\mathcal G_k\}_{k\in\mathbb N} = \mathcal G\subset J^*$ is a $\mathcal D$-groupoid if:
\begin{enumerate}
\item[a)] $\mathcal G$ is the zero set of a radical $\vec D$-ideal $\mathcal I \subset\mathcal O_{J^*}$.
\item[b)] $\mathcal G_k$ contains the identity section of $J^*_k$ and it is stable by inversion.
\item[c)] There is a dense Zariski open subset $U\subset M$ such that $\mathcal G_k|_{U\times U}$ is a Lie subgroupoid of $J_k^*(U\times U/U) \cong {\rm Aut}_k(U)$ for all $U$.
\end{enumerate}
\end{definition}

\begin{lemma}
Any $\mathcal D$-groupoid in the sense of Definition \ref{df:D_grupoidM} is a $\mathcal D$-groupoid in the sense of Definition \ref{def:D_groupoid}. 
\end{lemma}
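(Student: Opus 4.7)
Let $\mathcal G = \{\mathcal G_k\}_{k\in\mathbb N}$ be a closed subset of $J^*$ satisfying Malgrange's three conditions of Definition \ref{df:D_grupoidM}, with defining radical $\vec D$-ideal $\mathcal I \subset \mathcal O_{J^*}$. Via the identification ${\rm Aut}_k(M) \cong J^*_k$, each $\mathcal G_k$ lives inside ${\rm Aut}_k(M)$. The task is to verify conditions (a) and (b) of Definition \ref{def:D_groupoid}.

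For condition (a), I would first observe that the Lie subgroupoid structure of $\mathcal G_k|_{U \times U}$ over some dense Zariski open $U \subset M$ is exactly Malgrange's hypothesis (c), which immediately yields part (a) of Definition 2.1. For the Zariski density part, I would invoke the generic involutivity fact (b) recalled in this appendix: after possibly shrinking $U$, for $k$ large enough the projections $\mathcal G_{k+1}|_U \to \mathcal G_k|_U$ become affine bundles. Combined with the presence of the identity section of $\mathcal G_k$ (which dominates $M$) and the closure under composition and inversion, this forces every irreducible component of $\mathcal G_k$ dominating $M$ by source to also dominate $M$ by target. Any spurious component lying entirely over a proper closed subset of $M$ is then incompatible with the groupoid axioms on the dense open $U$ and can be absorbed into the singular locus, that is, discarded by passing to the Zariski closure of $\mathcal G_k|_U$.

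For condition (b), I translate the invariance of a rational function $f \in {\rm Inv}(\mathcal G_k) \subseteq \mathbb C({\rm R}_kM)$ into the vanishing on $\mathcal G_k$ of a regular expression. Explicitly, for $\sigma \in \mathcal G_k$ with source $x$ and target $y$ acting on a frame $\varphi \in {\rm R}_kM_x$ via $\sigma^{(k)}$, the invariance reads $f(\sigma^{(k)}\varphi) - f(\varphi) = 0$. After clearing denominators this yields an element of the ideal $\mathcal I_k$ (extended to the frame bundle). Since $\mathcal I$ is $\vec D$-stable, and the jet-space total derivative $\vec D_j^{\rm tot}$ is compatible, under the identifications, with the frame-bundle total derivative $\delta_j$, application of $\vec D_j^{\rm tot}$ produces an element in $\mathcal I_{k+1}$ which, when unwound, expresses exactly the invariance $\delta_j f \in {\rm Inv}(\mathcal G_{k+1})$.

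The principal obstacle is the precise translation between rational functions on the frame bundle ${\rm R}_kM$ (used by Definition \ref{def:D_groupoid}) and regular functions on the jet space $J^*_k$ (used by Definition \ref{df:D_grupoidM}). The bridge is provided by the chain of identifications ${\rm Aut}_k(M) \cong {\rm Iso}({\rm R}_kM) \cong J^*_k$ and the compatibility of the associated total derivative operators, but the careful bookkeeping of $\Gamma_k$-equivariance and ideal membership underpinning this equivalence is the substance of the argument; complete details appear in Davy's thesis \cite{Damien2016specialisation}.
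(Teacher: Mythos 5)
Your route for condition (b) of Definition \ref{def:D_groupoid} is genuinely different from the paper's, and the difference is where the gap sits. You encode the invariance of $f\in{\rm Inv}(\mathcal G_k)$ as a regular function vanishing on $\mathcal G_k$, place it in $\mathcal I_k$, apply $\vec D_j^{\rm tot}$, and "unwind". But to pass from a rational function on ${\rm R}_kM$ to a function on $J^*_k$ you must evaluate against a chosen frame section $\varphi\colon M\to {\rm R}_kM$ (this is the map $\varphi_J$ of Lemma \ref{lm:lmA}); after differentiating, what membership in $\mathcal I_{k+1}$ gives you is only $(\delta_jf)(\sigma^{(k+1)}\varphi_{s(\sigma)})=(\delta_jf)(\varphi_{s(\sigma)})$ for $\sigma\in\mathcal G_{k+1}$, i.e.\ invariance tested against a single frame in each fibre. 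Membership of $\delta_jf$ in ${\rm Inv}(\mathcal G_{k+1})$ requires the identity for every frame $\varphi_{s(\sigma)}\cdot\gamma$ with $\gamma\in\Gamma_{k+1}$, and right translation by $\gamma$ does not commute with the $\delta_j$'s, so this does not follow formally from the single-frame statement. You name this bookkeeping as "the substance of the argument" and defer it to \cite{Damien2016specialisation}; that deferred step is precisely what the lemma asks you to prove. The paper sidesteps the issue entirely: it uses the hypothesis that $\mathcal I$ is a radical $\vec D$-ideal only through the differential Nullstellensatz, which makes the jets of local analytic solutions Zariski dense in $\mathcal G$; for an analytic solution $\sigma\colon U\to V$ the pullback $\sigma^*$ acts on all frames simultaneously and is a $\Delta$-ring morphism of $\mathcal O_{{\rm R}V}\to\mathcal O_{{\rm R}U}$, so $f|_{{\rm R}U}=\sigma^*(f|_{{\rm R}V})$ immediately yields $\delta_jf|_{{\rm R}U}=\sigma^*(\delta_jf|_{{\rm R}V})$ with no frame-by-frame analysis.

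Your treatment of the Zariski-density requirement in condition (a) also does not work as written. You propose to discard a component of $\mathcal G_k$ lying over a proper closed subset of $M$ by "passing to the Zariski closure of $\mathcal G_k|_U$"; but the lemma concerns the given $\mathcal G_k$, and if such a component existed then $\mathcal G_k$ would simply fail to be a rational subgroupoid --- you must show no such component exists, not replace $\mathcal G_k$ by a smaller set. Generic involutivity, as recalled in the appendix, is a statement about $Z_k|_U$ and gives no control over what sits above $M\setminus U$. The correct tool is again the differential Nullstellensatz: every irreducible component of $\mathcal G_k$ contains a Zariski dense set of jets of local analytic solutions, the domain of such a solution is a nonempty open set in the usual topology and hence Zariski dense in the irreducible variety $M$, and stability under inversion then gives domination of $M$ by both source and target. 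In short, both halves of your argument need the one ingredient you did not use, namely that the zero set of a radical $\vec D$-ideal is determined by its analytic solutions.
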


\begin{proof}
Let us consider $\mathcal G = \{\mathcal G_k\}$ a $\mathcal D$-groupoid in the sense of Definition \ref{df:D_grupoidM}. By Proposition \ref{pro:Galois_correspondence1} $\mathcal G_k$ is completely determined by its field of invariant ${\rm Inv}(\mathcal G_k)\subset\mathbb C({\rm R}M)$. Thus, $\mathcal G$ is completely determined by its field of invariants,
$$\mathbb F = \bigcup_k {\rm Inv}(\mathcal G_k)$$
which is by construction a $\Gamma$-invariant field containing $\mathbb C$. We only need to check that {\color{red}it is a} $\Delta$-field. Let us denote ${\rm Sol}(\mathcal G)$ its pseudogroup of solutions, that is, local biholomorphisms $\sigma\colon U\xrightarrow{\sim} V$ between open subsets (in the usual topology) of $M$ such that for all $p\in U$ $j_p\sigma\in\mathcal G$. Sigma, naturally induces a $\Delta$-ring morphism,
$$\sigma^*\colon \mathcal O_{{\rm R}V}\to \mathcal O_{{\rm R}U}$$
that extends to the corresponding field of fractions, containing $\mathbb C({\rm R}M)$. A rational differential function is $\sigma$-invariant if $f|_{{\rm R}U} = \sigma^*(f|_{{\rm R}V})$, which is equivalent to say that is is invariant by $j_p\sigma$ for all $p\in U$. By Ritt's differential Nullstellensazt we have that the jets of local transformations in ${\rm Sol}(\mathcal G)$ are dense in $\mathcal G$. Therefore, a rational differential function is $\mathcal G$-invariant if and only if it is ${\rm Sol}(\mathcal G)$-invariant. Now, if $f$ is $\sigma$-invariant then for all $j = 1,\ldots, m$,
$$\delta_j(f|_{{\rm R}U}) = \delta_j(\sigma^*(f|_{{\rm R}V})) = \sigma^*(\delta_jf|_{{\rm R}V}),$$
and therefore $\delta_jf$ is a also $\sigma$-invariant.It follows that $\mathbb F$ is a $\Delta$-field and we have
$$\mathcal G = {\rm Sym}_\Delta(\mathbb F)$$
which is a $\Delta$-groupoid in the sense of Definition \ref{def:D_groupoid}.
\end{proof}

In order to prove the other way of the equivalence we need to examine how to compute the ideal of a Lie groupoid of gauge transformations.

\begin{lemma}\label{lm:lmA}
Let $P\to M$ be a $G$-principal bundle, $\varphi\colon M\to P$ a regular section, and $\mathcal G$ a Lie groupoid of gauge transformations of $P$. Let us assume that ${\rm Inv}(\mathcal G) = \mathbb C(I_1,\ldots,I_\ell)$ such that $\varphi^*(I_k)$ is a regular function in $M$ for $k = 1,\ldots,\ell$. Let us consider,
$\varphi_{\rm iso}\colon $ ${\rm Iso}(P)\to P$, $\sigma\mapsto \sigma\circ \varphi_{s(\sigma)}$. Then:
\begin{itemize}
    \item[(a)] $\varphi_{\rm iso}^*(I_k)$ is regular in an affine neighborhood $U\subset {\rm Iso}(P)$ of $\mathcal G$ for all $k = 1,\ldots,\ell.$
    \item[(b)] $\mathcal G$ is the zero set of the ideal 
    $(\varphi^*_{\rm iso}(I_1)-\varphi^*(I_1),\ldots,\varphi^*_{\rm iso}(I_\ell)-\varphi^*(I_\ell))\subset \mathcal O_U$.
\end{itemize}
\end{lemma}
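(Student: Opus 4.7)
The plan is to work in a local trivialization of $P$ induced by the section $\varphi$. Identifying $P\cong M\times G$ via $(m,g)\mapsto\varphi(m)\cdot g$ produces a trivialization ${\rm Iso}(P)\cong M\times M\times G$ in which a triple $(a,b,h)$ corresponds to the unique $G$-equivariant map $\sigma\colon P_a\to P_b$ sending $\varphi(a)\cdot g$ to $\varphi(b)\cdot hg$. In these coordinates one has $\varphi_{\rm iso}(a,b,h) = \varphi(b)\cdot h$, each invariant $I_k$ becomes a rational function $I_k(m,g)$ on $M\times G$ with $\varphi^*(I_k)(m) = I_k(m,e)$, and the pulled-back function $\varphi_{\rm iso}^*(I_k)$ is simply $(a,b,h)\mapsto I_k(b,h)$, depending only on the target and the group component.

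For part (a), I would observe that the invariance of each $I_k$ under $\mathcal G$, applied at the specific point $\varphi(s(\sigma))\in P_{s(\sigma)}$, forces the identity $\varphi_{\rm iso}^*(I_k)|_{\mathcal G} = s^*\varphi^*(I_k)|_{\mathcal G}$ of rational functions on $\mathcal G$. Since the right-hand side is regular on $\mathcal G$ (by the standing hypothesis that $\varphi^*(I_k)$ is regular on $M$), the pole divisor of $\varphi_{\rm iso}^*(I_k)$ on ${\rm Iso}(P)$ cannot meet $\mathcal G$; removing this divisor together with the (codimension $\geq 2$) indeterminacy locus from some affine open containing $\mathcal G$ produces the desired affine neighborhood $U$.

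For part (b), one inclusion, $\mathcal G \subseteq V(J)$ with $J=(\varphi_{\rm iso}^*(I_k) - s^*\varphi^*(I_k))_{k=1}^\ell$, is immediate from the invariance computation just used. For the reverse inclusion, suppose $\sigma=(a,b,h)\in U$ satisfies $I_k(b,h) = I_k(a,e)$ for every $k$. By Proposition \ref{pro:Galois_correspondence1} combined with the hypothesis ${\rm Inv}(\mathcal G) = \mathbb C(I_1,\ldots,I_\ell)$, membership $\sigma\in\mathcal G$ is equivalent to $\sigma^*(I_k) = I_k$ on $P_a$ for each $k$, which in the trivialization reads $I_k(b,hg) = I_k(a,g)$ as rational functions of $g\in G$. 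Here enters the decisive hypothesis from Proposition \ref{pro:Galois_correspondence1} that $\mathbb F = {\rm Inv}(\mathcal G)$ is a $G$-invariant subfield of $\mathbb C(P)$: for every $g\in G$ the right translation $R_g$ satisfies $R_g^*I_k \in \mathbb F$, so there exists a rational function $R_{k,g}$ in $\ell$ variables with $I_k(p\cdot g) = R_{k,g}(I_1(p),\ldots,I_\ell(p))$ for all $p$. Evaluating at $p=\varphi(b)\cdot h$ and $p=\varphi(a)$ and invoking $I_j(b,h)=I_j(a,e)$ gives $I_k(b,hg) = R_{k,g}(I_1(b,h),\ldots) = R_{k,g}(I_1(a,e),\ldots) = I_k(a,g)$, as required.

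The main obstacle I foresee is the careful treatment of the indeterminacy locus of $\varphi_{\rm iso}^*(I_k)$ in part (a): one must ensure the affine neighborhood $U$ covers \emph{all} of $\mathcal G$, which may require first restricting to the dense open subset of $M$ on which $\mathcal G$ is genuinely a Lie subgroupoid in the sense of the paper's definition. The algebraic heart of part (b)---the reduction from a single equation per invariant (at $g=e$) to preservation on the whole fiber via the $G$-invariance of $\mathbb F$---is the conceptual key, and once that is in place the remainder is bookkeeping.
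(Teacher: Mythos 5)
Your proposal is correct and follows essentially the same route as the paper: both characterize $\mathcal G$ by restricting the invariance equations $a^*(I_k)=I_k$ on ${\rm Iso}(P)\times_M P$ to the distinguished point $p=\varphi_{s(\sigma)}$ of each fibre, via the section $({\rm Id},\varphi_{\rm iso})$. Where you go beyond the paper is welcome rather than divergent: the paper simply asserts that $\mathcal G\times_M P$ is the zero set of $\bigl(a^*(I_1)-I_1,\ldots,a^*(I_\ell)-I_\ell\bigr)$, whereas you spell out why one evaluation point per fibre suffices --- combining the $G$-equivariance of $\sigma$ with the $G$-invariance of ${\rm Inv}(\mathcal G)$, so that each $R_g^*I_k$ is a rational function of $I_1,\ldots,I_\ell$ and the single-point identity propagates along the whole $G$-orbit --- which is precisely the argument needed to make the paper's assertion rigorous.
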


\begin{proof}
Let us consider the action $a\colon {\rm Iso}(P)\times_M P \to P$. By definition,a rational function $I\subset\mathbb C(P)$ is $\mathcal G$-invariant if and only if $a^*(I) = I$. Let $\mathcal W$ be a maximal affine open subset in which $I_1,\ldots,I_k$ are regular. Then $(\mathcal G\times_M P)\cap a^{-1}(\mathcal W)$ is the zero set of the ideal:
$$(a^*(I_1) - I_\ell, \ldots, a^*(I_\ell) - I_\ell).$$
For any $\sigma\in\mathcal G$ we have $(a^*I)(\sigma, \varphi_{s(\sigma}) = I(\varphi_{s(\sigma)})$ and therefore  the map 
$$({\rm Id}, \varphi_{\rm iso})\colon {\rm Iso}(P)\to \mathcal {\rm Iso}(P)\times P$$
maps $\mathcal G$ into $a^{-1}(\mathcal W)$. By taking $U = ({\rm Id}, \varphi_{\rm iso})^{-1}(a^{-1}(\mathcal W))$ and observing that\linebreak
$({\rm Id}, \varphi_{\rm iso})^*(a^*(I_k) - I_k)$ $=$ $\varphi^*_{\rm iso}(I_k) - \varphi^*(I_k)$ we finish the proof. 
\end{proof}

\begin{lemma}
Any $\mathcal D$-groupoid in the sense of Definition \ref{def:D_groupoid} is a $\mathcal D$-groupoid in the sense of Definition \ref{df:D_grupoidM}. 
\end{lemma}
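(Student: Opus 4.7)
The plan is to verify conditions (a), (b), (c) of Definition \ref{df:D_grupoidM} for a $\mathcal D$-groupoid $\mathcal G = \{\mathcal G_k\}_{k \in \mathbb N}$ given in the sense of Definition \ref{def:D_groupoid}. Conditions (b) and (c) are essentially automatic: identity sections of $\mathrm{Aut}_k(M)$ preserve every function, hence lie in each $\mathcal G_k$; the inversion $\sigma \mapsto \sigma^{-1}$ fixes $\mathrm{Inv}(\mathcal G_k)$, hence preserves $\mathcal G_k$; and condition (a) of Definition \ref{def:D_groupoid} supplies, for each $k$, a Zariski open set on which $\mathcal G_k$ is a Lie subgroupoid, which after truncation to a finite number of orders (justified by the Kolchin noetherianity recalled in the body of the paper) yields the common open $U \subset M$ required by (c).

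The substance of the lemma lies in (a). First I would invoke Kolchin's finiteness theorem to fix an order $r$ and rational differential invariants $I_1, \ldots, I_\ell \in \mathrm{Inv}(\mathcal G_r)$ that $\Delta$-generate $\mathrm{Inv}_\Delta(\mathcal G)$. Restricting $M$ to a Zariski open subset over which $\mathrm R_r M \to M$ admits a rational section $\varphi$ whose pullbacks $\varphi^*(\delta^\alpha I_j)$ are regular, I would then apply Lemma \ref{lm:lmA} at each order $k \geq r$ to exhibit $\mathcal G_k$ on a suitable affine open $U_k \subset \mathrm{Aut}_k(M) \subset J^*_k$ as the zero set of the finitely generated ideal
$$ \mathcal I_k = \bigl(\, F_{j,\alpha} \,:\, 1 \leq j \leq \ell,\ |\alpha| \leq k - r \,\bigr), \qquad F_{j,\alpha} := \varphi_{\mathrm{iso}}^*(\delta^\alpha I_j) - \varphi^*(\delta^\alpha I_j). $$
Assembling these orders together gives an ideal $\mathcal I \subset \mathcal O_{J^*}$ whose zero set is $\mathcal G$.

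The main obstacle is to check that $\mathcal I$ is stable under the total derivatives $\vec D_i^{\mathrm{tot}}$ of $\mathcal O_{J^*}$, i.e.\ that it is a $\vec D$-ideal. The compatibility one needs is the intertwining relation
$$ \vec D_i^{\mathrm{tot}} F_{j,\alpha} \;\equiv\; F_{j,\alpha + \epsilon_i} \pmod{\mathcal I_k}, $$
and it comes from a chain-rule calculation applied to the substitution $\sigma \mapsto \sigma^{(\infty)}(\varphi_{s(\sigma)})$: infinitesimal variation of the source in the direction $\varepsilon_i$ corresponds, on the frame side, to precomposing the frame with an infinitesimal translation of $(\mathbb C^m, 0)$, which is exactly how $\delta_i$ is defined on $\mathbb C(\mathrm R M)$. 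Condition (b) of Definition \ref{def:D_groupoid} then guarantees $\delta^\alpha I_j \in \mathrm{Inv}(\mathcal G_{r+|\alpha|})$, closing $\mathcal I$ under $\vec D^{\mathrm{tot}}$. Radicality is automatic, since $\mathcal I$ is the vanishing ideal of the Zariski closed (and reduced, by the Galois correspondence of Proposition \ref{pro:Galois_correspondence1}) set $\mathcal G \subset J^*$. The careful verification of the intertwining identity is the technical heart of the argument and is carried out explicitly in \cite{Damien2016specialisation}.
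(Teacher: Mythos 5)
Your treatment of conditions (b) and (c) of Definition \ref{df:D_grupoidM} is fine, and the construction for (a) --- pulling back the $\Delta$-generators $I_1,\ldots,I_\ell$ of ${\rm Inv}_\Delta(\mathcal G)$ through the frame section via Lemma \ref{lm:lmA} and using the intertwining $\vec D_i^{\rm tot}\circ\varphi_{\rm iso}^* = \varphi_{\rm iso}^*\circ\delta_i$ (which in fact gives $\vec D_i^{\rm tot}F_{j,\alpha}=F_{j,\alpha+\epsilon_i}$ exactly, not merely modulo $\mathcal I_k$) --- is the same device the paper uses. The genuine gap is the last step: Definition \ref{df:D_grupoidM} requires $\mathcal G$ to be the zero set of a \emph{radical} $\vec D$-ideal, and your justification ``radicality is automatic, since $\mathcal I$ is the vanishing ideal of $\mathcal G$'' does not apply to the ideal you actually built. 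The ideal generated by the finitely many $F_{j,\alpha}$ is in general strictly smaller than the vanishing ideal of $\mathcal G$ (Lemma \ref{lm:lmA} asserts only equality of zero sets), so it need not be radical; and if you instead take $\mathcal I$ to be the true vanishing ideal, you lose, without further argument, the property of being a $\vec D$-ideal, since stability under $\vec D_i^{\rm tot}$ was checked only on your chosen generators, not on arbitrary elements of the radical.

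Either horn can be repaired, but something must be said. One repair is to keep your finitely generated $\vec D$-ideal and invoke the standard fact (valid because $\mathbb Q\subset\mathcal O_{J^*}$, and false in positive characteristic) that the radical of a differential ideal is again a differential ideal; its zero set is still $\mathcal G$. The paper takes the opposite route: it starts from the radical vanishing ideal $\mathcal I$ of $\mathcal G$ and proves directly that it is $\vec D$-stable, by writing, for $F\in\mathcal I_k$ and on an affine neighborhood of an irreducible component of $\mathcal G$, a power $F^{n}=\sum_j G_j\,\bigl(\varphi_J^*(I_j)-\varphi^*(I_j)\bigr)$ with $F^{n-1}$ not identically zero on that component, applying $\vec D_\alpha^{\rm tot}$ by Leibniz, and using condition (b) of Definition \ref{def:D_groupoid} to see that the right-hand side still vanishes on $\mathcal G$; irreducibility then forces $\vec D_\alpha^{\rm tot}F$ to vanish there. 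Either way an extra argument is unavoidable, and your appeal to \cite{Damien2016specialisation} covers the intertwining identity but not this radicality point.
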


\begin{proof}
Let $\mathcal G = \{\mathcal G_k\}$ a $\mathcal D$-grupoid in the sense of Definition \ref{def:D_groupoid}. Let $\mathcal I$ be the radical ideal of $\mathcal O_{J^*}$ of regular functions vanishing in $\mathcal G$ and $\mathcal I_k = \mathcal I\cap \mathcal O_{J^*_k}$. \\

The system of local coordinates $\vec x_1,\ldots, \vec x_m$ induces a global frame $\varphi\colon M \to {\rm R}M$ defined as $\varphi_p = j_0(\varepsilon \mapsto p + \varepsilon)$. The global frame $\varphi$ induces a regular submersion\footnote{This is the same as $\varphi_{\rm iso}$ in Lemma \ref{lm:lmA}.} $\varphi_J\colon J^*\to {\rm R}M$, $\sigma\mapsto \sigma\circ\varphi_{s(\sigma)}$. The global frame $\varphi$ is defined in such way that
$$\varphi^*(\delta_j I) = \vec D_j\varphi^*(I), \quad \varphi_J^*(\delta_j I) = \vec D_j^{\rm tot}\varphi^*_JI$$
for any rational differential function $I\in \mathbb C(M)$ whose domain intersects the graph of $\varphi$. If $I$ is indeed $\mathcal G$-invariant then $\varphi_J^*(I) - \varphi^*(I)$ vanishes identically along $\mathcal G$.\\

For any given $k$ let us consider $I_1,\ldots, I_{\ell}$ generators of ${\rm Inv}(\mathcal G_k)$. By means, if necessary, of replacing $M$ by an smaller open affine subset and some right translation by an element of $\Gamma_k$ in ${\rm R}_kM$ we may assume that we are under the hypothesis of Lemma \ref{lm:lmA}. Thus,
$$\mathcal I_k|_{U} = {\rm rad}(\varphi_J^*(I_1)-\varphi(I_1),\ldots, \varphi_J^*(I_\ell)-\varphi(I_\ell)).$$
Where $U$ is an affine neighborhood of $\mathcal G$. Let us consider $F\in \mathcal I_k$. By now, let us assume that $\mathcal G$ is irreducible. Then, there is $n$ such that $F^{n-1}$ is not identically $0$ on $\mathcal G$ and such that,
$$F^{n} = \sum_{j=1}^\ell G_j(\varphi_J^*(I_j)-\varphi^*(I_j)),$$
with $G_j\in \mathcal O_U\cap \mathbb C(J_k^*)$.
Then for any $\alpha = 1,\ldots,m$ we have,
$$nF^{n-1}\vec D^{\rm tot}_\alpha F =\sum_{j=1}^m
\left(
\vec D^{\rm tot}_\alpha G_j(\varphi_J^*(I_j)-\varphi^*(I_j)) + G_j(\varphi_J^*(\delta_{\alpha}I_j)-\varphi^*(\delta_{\alpha}I_j) \right),$$
therefore $nF^{n-1}\vec D^{\rm tot}_{\alpha}F$ vanishes along $\mathcal G$, so does $\vec D^{\rm tot}_\alpha F$. Let us see the non-irreducible case. Let $\mathcal G = \bigcup_{\beta} \mathcal G^{(\beta)}$ the decomposition of $\mathcal G$ in irreducible components. We take for each $\beta$ an affine open neighborhood $U^{(\beta)}$ of $\mathcal G^{(\beta)}$ contained in $U - \bigcup_{\gamma\neq\beta}\mathcal G^{(\gamma)}$. Then, for each $\beta$ there is $n_{\beta}$ such that $F^{n}\in \mathcal I_{k}|_{\mathcal U^{(\beta)}}$ and $F^{n-1}$ does not vanish identically on $\mathcal G^{(\beta)}$. The same argument proves that $\vec D^{\rm tot}_\alpha F$ vanishes along $\mathcal G^{(\beta)}$ for all $\beta$, and thus $\vec D^{\rm tot}_\alpha F\in \mathcal I$. It follows that $\mathcal I$ is a radical $\vec D$-ideal and $\mathcal G$ is a $\mathcal D$-groupoid in the sense of definition \ref{df:D_grupoidM}.
\end{proof}

From the above lemmas it follows that 
definitions \ref{def:D_groupoid} and \ref{df:D_grupoidM} are equivalent.

\subsection*{Acknowledgements}
We would like to thank Universit\'e de Rennes 1 and Universidad Nacional de Colombia for their hospitality and support. D. Bl\'azquez-Sanz has been partially funded by Colciencias project ``Estructuras lineales en geometr\'ia y topolog\'ia'' 776-2017 code  57708 (Hermes UN 38300).
G. Casale has been partially funded by  Math-AMSUD project ``Complex Geometry and Foliations''.  J. S. D\'iaz Arboleda has been partially funded by Colciencias program 647 ``Doctorados Nacionales''.  We want to thank the anonymous referee for his/her carefull reading, suggestions and corrections that greatly helped to improve the quality of the paper.

\bibliographystyle{plain}
\bibliography{references}
\end{document}